\newtheorem{thm}{Theorem}[section]
\newtheorem{cor}[thm]{Corollary}
\newtheorem{prop}[thm]{Proposition}
\newtheorem{lem}[thm]{Lemma}
\theoremstyle{definition}
\newtheorem{defn}{Definition}[section]
\newtheorem{exam}{Example}[section]
 \theoremstyle{remark}
\newtheorem{rmk}{Remark}[section]
\newtheorem{\theequation}{}[section]
\renewcommand{\theequation}{\thesection.\arabic{equation}}
\newcommand{\C}{\field{C}}
\newcommand{\R}{\field{R}}
\def\2ovlOB{\overline {\overline \Omega}_B}
\def\1ovlO{\overline \Omega}
\def\R{{\Bbb R}}
\def\BB{{\Bbb B}}
\def\C{{\Bbb C}}
\def\B{{\mathcal B}}
\def\N{{\Bbb N}}
\begin{document}

\title[EXTENDED CES\`ARO COMPOSITION OPERATORS ON  WEAK BLOCH-TYPE SPACES]{EXTENDED CES\`ARO COMPOSITION OPERATORS ON   WEAK BLOCH-TYPE SPACES ON THE UNIT BALL OF A HILBERT SPACE}

\author{Thai Thuan Quang} 
\address{Department of Mathematics and Statistics, 
Quy Nhon University,
170 An Duong Vuong, Quy Nhon, Binh Dinh, Vietnam.}
\email{thaithuanquang@qnu.edu.vn} 


%
%
\date{March 10, 2022.}
\keywords{Operators on Hilbert spaces, Weighted composition operator, Unit ball, Bloch spaces, Boundedness, 	Compactness}

\subjclass[2020]{Primary 47B38, Secondary 30H30, 47B02, 47B33, 47B91}
\maketitle

\bigskip
\begin{abstract} Denote by $ B_X $   the unit ball of  an infinite-dimensional  complex Hilbert space $ X. $ Let $\psi \in H(B_X),$ the space of all holomorphic functions on the unit ball $B_X,$   $\varphi    \in S(B_X)$ the set of holomorphic self-maps of $B_X. $ Let $C_{\psi, \varphi}: \mathcal B_{\nu}(B_X)$ (and $    \mathcal B_{\nu,0}(B_X)$) $\to \mathcal B_{\mu}(B_X)  $ (and $ \mathcal B_{\mu,0}(B_X)$) be weighted extended Ces\`aro operators   induced
	by products of  the extended Ces\`aro operator $ C_\varphi $ and integral  operator $T_\psi.$  
	In this paper, we characterize the boundedness and compactness of $ C_{\psi,\varphi} $ via  the estimates     for the restrictions of   $ \psi $ and $ \varphi $ to  a $ m$-dimensional subspace of $ X $   for some $ m\ge2. $ Based on these we give   necessary as well as  sufficient conditions for the boundednees, the (weak) compactness of   $ \widetilde{C}_{\psi, \varphi} $ between 
	spaces of Banach-valued holomorphic functions  weak-associated to $ \mathcal B_{\nu}(B_X) $ and $ \mathcal B_{\mu}(B_X). $
	\end{abstract}

\bigskip
\section{Introduction}


Let $ E $ be a space of holomorphic functions on the unit ball  $B_X $  in a complex Banach space $ X.$  Let $ H(B_X) $ be the
class of all holomorphic functions on $ B_X $ and $ S(B_X) $ the collection of all the holomorphic self-maps of $B_X. $  For a  $ \varphi \in S(B_X)$  and a  $ \psi \in H(B_X),$ the composition operator $ C_\varphi $ and the extended Ces\`aro operato $ T_\psi $ are defined by
\[ (C_\varphi f)(z) := (f \circ \varphi)(z),\quad (T_\psi f)(z) := \int_0^1f(tz)R\psi(tz)\dfrac{dt}{t} \quad \forall f \in E, \ \forall z \in \BB_n.\]
where $ R\psi(z):= D\psi(z)z $ is  the radial derivative of $ \psi$ at $ z$ with $ D\psi(z) $ is the Fr\'echet derivative of $ \psi $ at $ z. $ 

The study of composition operators consists in the comparison of the properties of the $ C_\varphi $  with that of the function $ \varphi $ itself, which is called the symbol of $ C_\varphi. $ One can characterize boundedness and compactness of $ C_\varphi $ and many other properties. 

The problem of studying of composition operators on various Banach spaces of holomorphic functions
on the unit disk or the unit ball, such as Hardy and Bergman spaces, the space $ H^\infty $  of all
bounded holomorphic functions, the disk algebra and weighted Banach spaces with sup-norm, etc. received a special attention of many authors during the past several decades. The weighted composition operators on
these spaces appeared in some works   with different applications. There is a great number of topics on operators of such a type: boundedness and compactness, compact differences, topological structure, dynamical and ergodic properties.

The extended Ces\`aro operator $ T_\psi $ is a natural extension of the Ces\`aro operator acting $ f \in H(\BB_1) $ where $ \BB_1 $ is the unit ball in $ \C, $
\[ C[f](z) := \sum_{j=0}^\infty\Bigg(\dfrac{1}{j+1}\sum_{k=0}^ja_k\Bigg)\]
with $ f(z) = \sum_{j=0}^\infty a_jz^j, $ the Taylor expansion of $ f. $

It is well know that $ C[\cdot]  $ acts as a bounded linear operator on various spaces of holomorphic functions,  including the Hardy and Bergman spaces. But it is not bounded on the Bloch space (see \cite{Xi}). A little calculation shows 
\[ C[f](z) = \frac{1}{z}\int_0^zf(\eta)\frac{1}{1-\eta}d\eta =    \int_0^1f(tz)\bigg(\log\dfrac{1}{1-\eta}\bigg)'\Big|_{\eta=tz}dt.\]
Hence, on most holomorphic function spaces, $ C[\cdot] $ is bounded if and only if the integral operator $ f\mapsto    \int_0^zf(t)\big(\log\frac{1}{1-t}\big)'dt $
is bounded. From this point of view it is natural to consider the extended Ces\`aro operator with holomorphic symbol $ \psi,$
\[ T_\psi f(z)  := \int_0^zf(\eta)\psi'(\eta)d\eta . \]
The boundedness and compactness of this operator on Hardy spaces, Bergman spaces, Bloch- type spaces and Lipschitz spaces have been studied in \cite{AC, ASi, WH}.
The extended Ces\`aro operator is a generalization of this operator. It has been well studied in many papers, see, for example, \cite{ASt,LS1,Hu} as well as the related references therein.

It is natural to discuss the product of extended Ces\`aro operator and composition operator. For $ \varphi \in S(\BB_n)$ and $ \psi \in H(\BB_n) $ the product can be expressed as
\begin{equation}\label{C_operator}
C_{\psi,\varphi}(f) := T_\psi C_\varphi f(z) = \int_0^1f(\varphi(tz))R(\psi(tz))\dfrac{dt}{t} \quad f \in H(\BB_n), z \in \BB_n.
\end{equation}
This operator is called \textit{extended  Ces\`aro composition operator}. It is interesting to characterize the boundedness and compactness of the product operator on all kinds of function spaces. Even on the disk of
$ \C, $ some properties are not easily managed;
see some recent papers in \cite{LS2,LS3,Ya,LZ, Ta}. 


Building on those foundations, the present paper continues this line of research and discusses the operator in infinite dimension. Namely, we study the boundedness and the compactness of $ C_{\psi,\varphi}: E_1 \to E_2 $ between weighted Bloch-type spaces of holomorphic functions on the unit ball of a Hilbert space, which has been investigated by T. T. Quang in \cite{Qu}. Based on these results we give the characterizations of the boundedness and the compactness of the operators $ \widetilde{C}_{\psi,\varphi}: WE_1(Y) \to WE_2(Y) $ between 
 spaces of   holomorphic functions with values in a Banach space $ Y, $ weak-associated to    $ E_1, E_2 $ in the following sense: 
 
%

Let $ E $ be a   space of  holomorphic functions on the unit ball $ B_X $ of a Banach space $ X$ such that it contains the constant functions and 
 its closed unit ball $ B_E $ is compact in the compact open topology $ \tau_{co} $
of $ B_X.$  Let $ Y $ be a Banach space and $ W\subset Y' $ be a separating subspace of the dual $ Y' $ of $ Y.$ 
 We say that the space    
\[ WE(Y) := \{f: B_X \to Y:\  f \ \text{is locally bounded and}\ w\circ f \in E, \ \forall w \in W\}
\] 
equipped with the norm
\[ \|f\|_{WE(Y)} := \sup_{w\in W,\|w\|
	\le 1}\|w\circ f\|_E.  \]
is the  $ Y$-valued Banach space $ W$-associated to $ E. $ 


%




The paper is organized as follows. 

Section 2 is devoted to recall  some fundamental properties of the    Banach  space of Banach-valued holomorphic functions $ W$-associated to   a Banach space of scalar-valued holomorphic functions on the closed unit ball  of a Banach space. We also  introduce some assumptions  on the subspace $ W $ of the dual $ Y' $ of $ Y $ to give  necessary as well as  sufficient conditions for the boundednees, the (weak) compactness of   $ \widetilde{C}_{\psi, \varphi} $ via the respective properties of 
$C_{\psi, \varphi} $ (Theorem \ref{thm_WO}). Some   fundamental results on weighted Bloch-type space of holomorphic functions on the unit ball of a Hilbert space are also   summarized in this section.

The main results are stated in Section 3. Via the estimates for the restrictions of the functions $ \psi $ and $ \varphi $ to $ \BB_m $ for some $ m\ge2, $  we characterize the boundedness and the  compactness  of the operators  $C_{\psi, \varphi}$  between the spaces of (little) Bloch-type  $ \mathcal B_\mu(B_X), $ $ \mathcal B_{\mu,0}(B_X)$ as well as   the equivalent relationships between them. 
It should be noted that a necessary condition (but not sufficient) and a sufficient condition (but not necessary) for the compactness of $ C_{\psi, \varphi} $ are also obtained after any necessary minor modifications  for the holomorphic self-map $ \varphi. $ 
Finally, by applying the results we give the necessary as well as sufficient conditions for the boundedness and the (weak) compactness of $ \widetilde{C}_{\psi,\varphi}. $
Several helpful test functions and  auxiliary results concerning our computations will be introduced in this section. 

The proofs of the main theorems above will be given in Sections 4 and . 

Throughout this paper, we use the notions $X \lesssim Y$ and $X \asymp Y$ for non negative quantities $X$ and $Y$ to mean $X \le CY$ and, respectively, $Y/C \le X \le CY$ for some inessential constant $C >0.$

 \section{Preliminaries and Auxiliary Results}
 \setcounter{equation}{0}

 \subsection{Weak holomorphic spaces}
Let  $X, Y$ be  complex     Banach spaces. Denote by $ B_X $ the  unit ball  of $ X $ (we write $ \BB_n $ instead of $ B_{\C^n} $). 

 By $H(B_X, Y)$ we denote the vector  space of $Y$-valued holomorphic functions on $B_X.$ A holomorphic function $f \in H(B_X, Y)$ is called locally bounded holomorphic on $B_X$ if for every $z \in B_X$ there exists a neighbourhood $U_z$ of $0\in X$ such that $f(U_z)$ is bounded. Put
$$H_{LB}(B_X, Y) = \big\{f \in H(B_X, Y): \quad f \; \text{is locally bounded on}\; B_X\big\}.$$

 Suppose that $ E $ is a Banach space of holomorphic functions $ B_X \to \C $ such that
\begin{enumerate}
	\item [(e1)] $ E $ contains the constant functions,
	\item [(e2)] the closed unit ball $ B_E $ is compact in the compact open topology $ \tau_{co} $
	of $ B_X.$
\end{enumerate}
It is easy to check that the properties (e1)-(e2) are satisfied by a large number of well-known function
spaces, such as  classical  Hardy, Bergman, BMOA, and Bloch spaces.

Let $ W\subset Y' $ be a separating subspace of the dual $ Y' $ of $ Y.$ Consider the $ Y$-valued Banach space $ W$-associated to $ E $ 
	\[ WE(Y) := \{f: B_X \to Y:\  f \ \text{is locally bounded and}\ w\circ f \in E, \ \forall w \in W\}
\] 
equipped with the norm
\[ \|f\|_{WE(Y)} := \sup_{w\in W,\|w\|
	\le 1}\|w\circ f\|_E.  \]

%
The following proposition introduce a relation between $ E $ and $ WE(Y): $
\begin{prop}[\cite{Qu}, Proposition 2.1]\label{Prop2.1} Let $X, Y$ be   complex  Banach spaces and $W \subset Y'$ be a separating subspace. Let $ E $ be a Banach space of holomorphic functions $ B_X \to \C $ satisfying (e1)-(e2) and $ WE(Y) $ be the $ Y$-valued Banach space $ W$-associated to $ E. $ Then, the following  assertions  hold:
\begin{enumerate}
	\item [\rm (we1)] $ f\mapsto f \otimes y $ defines a bounded linear operator $ P_y: E \to WE(Y) $ for any $ y \in Y, $ where $ (f\otimes y)(z) = f(z)y $ for $ z \in B_X, $
	\item [\rm (we2)] $ g \mapsto w \circ g $ defines a bounded linear operator $ Q_{w}: WE(Y) \to E $ for any $ w \in W, $
	\item [\rm (we3)] For all $ z \in B_X $ the point evaluations $ \widetilde\delta_z: WE(Y) \to (Y,\sigma(Y,W)), $ where $ \widetilde\delta_z(g) = g(z), $   are continuous.
\end{enumerate}
In the case the hypothesis ``separating'' of $ W $ is replaced by a stronger one that $ W $ is ``almost norming'', we obtain the  assertion (we3') below instead of (we3):
\begin{enumerate}
	\item [\rm (we3')] For all $ z \in B_X $ the point evaluations $ \widetilde\delta_z: WE(Y) \to Y, $     are bounded.
\end{enumerate}
\end{prop}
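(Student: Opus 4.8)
The plan is to isolate the one analytic ingredient of the statement and then derive (we1)--(we3') by purely formal manipulations. That ingredient is the boundedness of the point evaluations $\delta_z\colon E\to\C$, $\delta_z(f)=f(z)$. Indeed, by (e2) the ball $B_E$ is $\tau_{co}$-compact, hence $\tau_{co}$-bounded, so $M_z:=\sup_{f\in B_E}|f(z)|<\infty$ for every $z\in B_X$; homogeneity then gives $|f(z)|\le M_z\|f\|_E$ for all $f\in E$, i.e.\ $\delta_z\in E'$ with $\|\delta_z\|_{E'}\le M_z$. (The same fact, via a closed-graph argument applied to the map $w\mapsto w\circ f$ from $Y'$ to $E$, also shows that $\|f\|_{WE(Y)}<\infty$ whenever $f$ is locally bounded and $w\circ f\in E$ for all $w\in W$, so that $WE(Y)$ is genuinely a normed space; I would include this remark for completeness.)

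First I would dispose of (we1). Fix $y\in Y$ and $f\in E$. The function $f\otimes y$ is continuous (being holomorphic), hence locally bounded, and for $w\in W$ one has $w\circ(f\otimes y)=w(y)f\in E$; thus $f\otimes y\in WE(Y)$. Moreover $\|w\circ(f\otimes y)\|_E=|w(y)|\,\|f\|_E$, so $\|f\otimes y\|_{WE(Y)}=\big(\sup_{w\in W,\|w\|\le1}|w(y)|\big)\|f\|_E\le\|y\|\,\|f\|_E$; since $P_y$ is plainly linear, it is bounded with $\|P_y\|\le\|y\|$. Next, for (we2), fix $w\in W$: for $g\in WE(Y)$ the function $Q_wg=w\circ g$ lies in $E$ by the very definition of $WE(Y)$, and scaling out $\|w\|$ gives $\|w\circ g\|_E\le\|w\|\,\|g\|_{WE(Y)}$; as $Q_w$ is linear, it is bounded with $\|Q_w\|\le\|w\|$.

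For (we3), fix $z\in B_X$. For every $w\in W$ and $g\in WE(Y)$ we have $w(\widetilde\delta_z(g))=w(g(z))=(w\circ g)(z)=\delta_z(Q_wg)$, i.e.\ $w\circ\widetilde\delta_z=\delta_z\circ Q_w$, which is continuous on $WE(Y)$ by (we2) and the boundedness of $\delta_z$. Since $\sigma(Y,W)$ is by definition the coarsest topology on $Y$ making every $w\in W$ continuous, this shows $\widetilde\delta_z\colon WE(Y)\to(Y,\sigma(Y,W))$ is continuous. Finally, for (we3'): $W$ almost norming means there is $c>0$ with $\|y\|\le c\,\sup_{w\in W,\|w\|\le1}|w(y)|$ for all $y\in Y$; applying this to $y=g(z)$ and using the identity $w\circ\widetilde\delta_z=\delta_z\circ Q_w$ together with $\|Q_wg\|_E\le\|g\|_{WE(Y)}$ for $\|w\|\le1$, we get $\|\widetilde\delta_z(g)\|\le c\,\sup_{\|w\|\le1}|\delta_z(Q_wg)|\le cM_z\|g\|_{WE(Y)}$, so $\widetilde\delta_z\colon WE(Y)\to Y$ is bounded.

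All the computations above are routine; the only genuine point, and the step I would handle most carefully, is the derivation of the boundedness of $\delta_z$ on $E$ from (e2) --- that is, the chain ``$B_E$ is $\tau_{co}$-compact $\Rightarrow$ $B_E$ is $\tau_{co}$-bounded $\Rightarrow$ $B_E$ is pointwise bounded at $z$'' --- since everything afterward is bookkeeping with the norm of $WE(Y)$ and with the definition of the initial topology $\sigma(Y,W)$.
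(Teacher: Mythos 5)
Your argument is correct and is essentially the intended one; the paper does not prove this proposition itself but cites \cite{Qu}, and the route is the standard one you follow: boundedness of the evaluations $\delta_z$ on $E$ extracted from the $\tau_{co}$-compactness of $B_E$ in (e2), followed by the formal identities $w\circ(f\otimes y)=w(y)f$ and $w\circ\widetilde\delta_z=\delta_z\circ Q_w$ together with the definition of the initial topology $\sigma(Y,W)$. One caveat on your parenthetical aside only: the closed-graph argument for finiteness of $\|f\|_{WE(Y)}$ requires the domain of $w\mapsto w\circ f$ to be complete, so it works for $Y'$ or for a closed subspace $W$ but not for an arbitrary separating subspace; since the proposition presupposes $WE(Y)$ is already the Banach space $W$-associated to $E$, this does not affect (we1)--(we3'), but the remark should not be stated as automatic.
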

Here, the subspace $ W $ of $ Y' $ is called \textit{almost norming} if 
\[ q_W(x) := \sup_{w \in W, \|w\|\le 1}|w(x)| \]
defines an equivalent norm on $ Y. $

\begin{thm}[Linearization, \cite{Qu}, Theorem 2.2]\label{thm_1.1} Let $X, Y$ be   complex  Banach spaces and $W \subset Y'$ be a separating subspace. Let $ E $ be a Banach space of holomorphic functions $ B_X \to \C $ satisfying (e1)-(e2). Then there exist a Banach space $^*\!E$ and a mapping $\delta_X \in H(B_X, {^*\!E})$ with the following  universal property:  A function $f \in WE(Y)$ if and only if there is a unique mapping $T_f \in L(^*\!E, Y)$ such that $T_f \circ \delta_X = f.$ This property characterize $^*\!E$ uniquely up to an isometric isomorphism.
	
	Moreover, the mapping
	$$ \Phi: f \in WE(Y) \mapsto T_f  \in L(^*\!E, Y)  $$
	is a topological isomorphism.
\end{thm}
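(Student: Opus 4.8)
The plan is to realize $^*\!E$ as a predual of $E$ in the sense of Mujica, to set up the correspondence $f\leftrightarrow T_f$ on the dense subspace spanned by point evaluations, and then to upgrade the resulting continuous linear bijection to a topological isomorphism by the open mapping theorem.

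\emph{Construction of $^*\!E$ and $\delta_X$.} Put
\[ {}^*\!E:=\{u\in E':\ u|_{B_E}\ \text{is}\ \tau_{co}\text{-continuous}\}, \]
a closed subspace of the dual Banach space $(E',\|\cdot\|)$. Using (e1)--(e2) and a Dixmier--Ng type argument, $f\mapsto(u\mapsto u(f))$ is an isometric isomorphism of $E$ onto $({}^*\!E)'$; thus $^*\!E$ is a predual of $E$. Define $\delta_X:B_X\to{}^*\!E$ by $\delta_X(z):=\delta_z$, where $\delta_z(f)=f(z)$. Point evaluations are $\tau_{co}$-continuous on $B_E$, so $\delta_z\in{}^*\!E$; for each $f\in E=({}^*\!E)'$ the map $z\mapsto\langle f,\delta_z\rangle=f(z)$ is holomorphic, and $\{\delta_z:z\in K\}$ is bounded in $E'$ for every $\tau_{co}$-compact $K\subset B_X$ (by (e2)), so $\delta_X$ is locally bounded and $E$-weakly holomorphic, hence $\delta_X\in H(B_X,{}^*\!E)$. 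A Hahn--Banach argument gives $\overline{\operatorname{span}}\,\{\delta_z:z\in B_X\}={}^*\!E$: an $f\in({}^*\!E)'=E$ annihilating all $\delta_z$ satisfies $f\equiv0$.

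\emph{The isomorphism $\Phi$.} Given $f\in WE(Y)$, set $T_f\big(\sum_ic_i\delta_{z_i}\big):=\sum_ic_if(z_i)$ on $\operatorname{span}\{\delta_z\}$. It is well defined: if $\sum_ic_i\delta_{z_i}=0$ in $E'$ then $\sum_ic_i(w\circ f)(z_i)=0$ for every $w\in W$, and $W$ separating forces $\sum_ic_if(z_i)=0$. It is bounded: for $w\in W$, $\|w\|\le1$,
\[ \Big|w\Big(\sum_ic_if(z_i)\Big)\Big|=\Big|\big\langle w\circ f,\ \textstyle\sum_ic_i\delta_{z_i}\big\rangle\Big|\le\|w\circ f\|_E\,\Big\|\textstyle\sum_ic_i\delta_{z_i}\Big\|_{{}^*\!E}\le\|f\|_{WE(Y)}\Big\|\textstyle\sum_ic_i\delta_{z_i}\Big\|_{{}^*\!E}, \]
and taking the supremum over such $w$ --- using that $q_W$ defines an equivalent norm on $Y$, cf.\ (we3') --- yields $\big\|\sum_ic_if(z_i)\big\|_Y\lesssim\|f\|_{WE(Y)}\big\|\sum_ic_i\delta_{z_i}\big\|_{{}^*\!E}$. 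By density $T_f$ extends uniquely to $T_f\in L({}^*\!E,Y)$ with $T_f\circ\delta_X=f$ and $\|T_f\|\lesssim\|f\|_{WE(Y)}$. Conversely, for $T\in L({}^*\!E,Y)$ put $f:=T\circ\delta_X$: then $w\circ T\in({}^*\!E)'=E$ and $(w\circ T)(\delta_z)=w(f(z))$ for $w\in W$, so $w\circ f$ equals the element $w\circ T$ of $E$; as $f$ is locally bounded (because $\delta_X$ is) and $W$-weakly holomorphic with $W$ separating, $f$ is holomorphic, i.e.\ $f\in WE(Y)$. These two assignments are mutually inverse, so $\Phi:f\mapsto T_f$ is a continuous linear bijection of the Banach space $WE(Y)$ onto the Banach space $L({}^*\!E,Y)$; the open mapping theorem makes it a topological isomorphism, and uniqueness of $T_f$ is immediate from density of $\operatorname{span}\{\delta_z\}$. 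Finally, the universal property characterizes $^*\!E$ up to isometric isomorphism by the usual diagram chase: a second pair $(F,\gamma)$ with the same property produces, when the property is applied to $\delta_X$ and to $\gamma$, operators $^*\!E\to F$ and $F\to{}^*\!E$ whose composites linearize $\delta_X$, resp.\ $\gamma$, hence equal the identities by uniqueness; one then checks these operators are isometric.

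\emph{Main obstacle.} The delicate point is that $T_f$ must actually take values in $Y$, not merely in the completion of $(Y,q_W)$ --- equivalently, the estimate $\big\|\sum_ic_if(z_i)\big\|_Y\lesssim\|f\|_{WE(Y)}\big\|\sum_ic_i\delta_{z_i}\big\|_{{}^*\!E}$ --- since a priori only the functionals in $W$ are known to pull $f$ back into $E$. This is precisely where the quantitative strength of the hypothesis on $W$ (norming/almost norming, cf.\ (we3')) has to be exploited. A secondary technical ingredient is the identification $E\cong({}^*\!E)'$, on which the whole construction rests, together with the standard fact that a locally bounded mapping which becomes holomorphic after composition with each member of a separating family of continuous functionals is itself holomorphic.
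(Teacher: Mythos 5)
Your architecture is the standard Ng--Mujica one (realize $^*\!E$ as the $\tau_{co}$-continuous part of $E'$, identify $E\cong({}^*\!E)'$, note $\overline{\operatorname{span}}\{\delta_z\}={}^*\!E$, define $T_f$ on the span of point evaluations and extend by density, then apply the open mapping theorem), and the converse direction $T\mapsto T\circ\delta_X$ together with the uniqueness and the diagram chase for the universal property are handled correctly. The well-definedness of $T_f$ on $\operatorname{span}\{\delta_z\}$ indeed uses only that $W$ is separating.

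The genuine gap is exactly at the point you yourself flag as the ``main obstacle'': you pass from the estimate
$\sup_{w\in W,\|w\|\le1}\big|w\big(\sum_ic_if(z_i)\big)\big|\le\|f\|_{WE(Y)}\big\|\sum_ic_i\delta_{z_i}\big\|_{{}^*\!E}$
to a bound on $\big\|\sum_ic_if(z_i)\big\|_Y$ by invoking that $q_W$ is an equivalent norm on $Y$. That is the \emph{almost norming} hypothesis of (we3'), whereas the theorem assumes only that $W$ is \emph{separating}. For a merely separating $W$ the left-hand side is $q_W\big(\sum_ic_if(z_i)\big)$ with $q_W$ a possibly strictly weaker norm, so your density extension of $T_f$ a priori lands in the $q_W$-completion of $Y$ rather than in $Y$, and boundedness of $T_f:{}^*\!E\to Y$ does not follow. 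Note also that the conclusion of the theorem, as you prove it, would immediately give $\|f(z)\|_Y=\|T_f\delta_z\|_Y\lesssim\|f\|_{WE(Y)}\|\delta_z\|_{{}^*\!E}$ for every separating $W$ --- i.e.\ (we3') without the almost norming hypothesis --- which is precisely the strengthening that Proposition \ref{Prop2.1} reserves for almost norming $W$. So either the separating case requires an additional argument you have not supplied (for instance a closed-graph/uniform-boundedness argument exploiting the asserted completeness of $WE(Y)$ to recover the $Y$-valued bound), or your proof establishes the theorem only under the stronger hypothesis that $W$ is almost norming. As written, the surjectivity and boundedness of $\Phi$ are not proved under the stated hypotheses.
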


\subsection{The  extended Ces\`aro composition  operators}
\setcounter{equation}{0}
%

Given $ \psi \in H(B_X) $ and $ z \in B_X. $ Then,   we denote by $ D\psi(z) $ the Fr\'echet derivative of $ \psi $ at $ z $ and $ R\psi(z):= D\psi(z)z $  the radial derivative of $ \psi$ at $ z. $ Note that, in the case $ X $ is a complex Hibert space with  an orthonormal basis$ (e_k)_{k \in \Gamma}$ we have 
\[ R\psi(z) = \langle\nabla\psi(z), \overline{z}\rangle \]
where $ \nabla\psi(z) = \Big(\frac{\partial\psi}{\partial z_k}(z)\Big)_{k\in \Gamma} $ is the gradient of $ \psi$ at $ z, $ and  $ \overline{z} = \sum_{k \in \Gamma}\overline{z_k}e_k $ for every $ z = \sum_{k \in \Gamma}z_ke_k \in B_X. $ 
It is obvious that $ |R\psi(z)| \le |\nabla\psi(z)|\|z\| <  |\nabla\psi(z)| $ for every $ z \in B_X. $

Let $ E_1 $ and $ E_2 $ be  Banach spaces of holomorphic functions $ B_X \to \C $ satisfying the conditions (e1) and (e2).  Let $\psi \in H(B_X)$ and  $\varphi \in S(B_X),$  the set of holomorphic self-maps of $B_X.$  Consider the operators $ C_{\psi,\varphi}: E_1 \to E_2 $ and $ \widetilde{C}_{\psi,\varphi}: WE_1(Y) \to WE_2(Y)$ given by
\[\begin{aligned}
C_{\psi,\varphi}(f) &= \int_0^1f(\varphi(tz))R(\psi(tz))\dfrac{dt}{t} \quad f \in E_1, z \in B_X, \\
\widetilde{C}_{\psi,\varphi}(g) &=   \int_0^1g(\varphi(tz))R(\psi(tz))\dfrac{dt}{t} \quad g \in WE_1(Y), z \in B_X.
\end{aligned} \]

Now, let $ y\in Y, $   $ w\in W $ such that $ \|y\| = \|w\| = 1 $ and $ w(y) = 1. $ Consider the maps $ P_y $ and $ Q_w $ as in  Proposition \ref{Prop2.1}. It is easy to check that
\[ C_{\psi,\varphi} = Q_w \circ \widetilde{C}_{\psi, \varphi} \circ P_y. \]
Then, by an argument analogous to that used for the proof of Proposition 2.1 in \cite{Qu} we get the following:
\begin{thm}\label{thm_WO}
 Let $X, Y$ be   complex  Banach spaces and $W \subset Y'$ be a  subspace. Let $ E_1 $ and $ E_2 $ be Banach spaces of holomorphic functions $ B_X \to \C $ satisfying (e1)-(e2).  Let $\psi \in H(B_X)$ and  $\varphi \in S(B_X).$  
 \begin{enumerate}
 	\item[\rm (1)] If $ W $ is separating then $ C_{\psi,\varphi} $    is bounded if and only if $ \widetilde{C}_{\psi,\varphi}$ is bounded;
 	\item[\rm (2)] If $ W $ is almost norming and   $C_{\psi,\varphi}    $ compact then:
 	\begin{itemize}
 		\item[\rm (a)]  $\widetilde{C}_{\psi,\varphi} $ is  compact if and only if the identity map $ I_Y: Y \to Y $ is   compact,     i.e. $ \dim Y <\infty;$
 			\item[\rm (b)]  $\widetilde{C}_{\psi,\varphi} $ is weakly compact if and only if the identity map $ I_Y: Y \to Y $ is weakly compact.
 	\end{itemize}
 \end{enumerate}
\end{thm}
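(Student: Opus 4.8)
The plan is to deduce Theorem~\ref{thm_WO} from the factorization identity $C_{\psi,\varphi} = Q_w \circ \widetilde{C}_{\psi,\varphi} \circ P_y$ together with the dual companion identity $\widetilde{C}_{\psi,\varphi} \circ P_y = P_y \circ C_{\psi,\varphi}$ and $Q_w \circ \widetilde{C}_{\psi,\varphi} = C_{\psi,\varphi} \circ Q_w$, all of which follow by a direct computation from the integral formulas (the scalar-valued operator and the vector-valued one are given by the same kernel, so composing with $P_y$ or $Q_w$ just moves the vector $y$ or the functional $w$ in and out of the integral). These intertwining relations, combined with the boundedness of $P_y$ and $Q_w$ from Proposition~\ref{Prop2.1}(we1)--(we2), are the backbone of everything.

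\textbf{Boundedness, part (1).} If $\widetilde{C}_{\psi,\varphi}$ is bounded, then $C_{\psi,\varphi} = Q_w \circ \widetilde{C}_{\psi,\varphi} \circ P_y$ is a composition of bounded maps, hence bounded; this direction needs only that $W$ is separating (so that Proposition~\ref{Prop2.1} applies and $WE_1(Y), WE_2(Y)$ are well-defined Banach spaces, and $y, w$ with $\|y\|=\|w\|=1$, $w(y)=1$ exist). For the converse, suppose $C_{\psi,\varphi}$ is bounded. The cleanest route is the Linearization Theorem~\ref{thm_1.1}: a map $g \in WE_1(Y)$ corresponds to $T_g \in L({}^*\!E_1, Y)$ with $g = T_g \circ \delta_X$, and one checks that $\widetilde{C}_{\psi,\varphi} g$ corresponds to $T_g \circ S$ where $S \in L({}^*\!E_2, {}^*\!E_1)$ is the linearization of the scalar operator $C_{\psi,\varphi}$ (the adjoint-type map induced on the predual spaces); boundedness of $C_{\psi,\varphi}$ gives boundedness of $S$, hence $\|\widetilde{C}_{\psi,\varphi} g\| = \|T_g \circ S\| \lesssim \|T_g\|\,\|S\| \asymp \|g\|_{WE_1(Y)}\|C_{\psi,\varphi}\|$. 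Alternatively, one argues directly: for $g \in WE_1(Y)$ and $w \in W$ with $\|w\| \le 1$, $w \circ \widetilde{C}_{\psi,\varphi} g = C_{\psi,\varphi}(w \circ g)$, so $\|w \circ \widetilde{C}_{\psi,\varphi} g\|_{E_2} \le \|C_{\psi,\varphi}\|\,\|w\circ g\|_{E_1} \le \|C_{\psi,\varphi}\|\,\|g\|_{WE_1(Y)}$; taking the supremum over such $w$ gives $\|\widetilde{C}_{\psi,\varphi} g\|_{WE_2(Y)} \le \|C_{\psi,\varphi}\|\,\|g\|_{WE_1(Y)}$. (One must separately note $\widetilde{C}_{\psi,\varphi} g$ is locally bounded, which follows from the integral representation and local boundedness of $g$.) This last computation is in fact self-contained and is probably the preferred presentation.

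\textbf{Compactness, part (2).} Assume $W$ almost norming and $C_{\psi,\varphi}$ compact; by part (1) it is bounded, so $\widetilde{C}_{\psi,\varphi}$ is bounded. For (a), the ``only if'' direction: if $\widetilde{C}_{\psi,\varphi}$ is compact, then $I_Y$ factors through it as $I_Y = \widetilde\delta_z \circ \widetilde{C}_{\psi,\varphi} \circ P_y \circ (\text{suitable map})$ --- more precisely one uses that for fixed $y$ the map $f \mapsto (\widetilde{C}_{\psi,\varphi}(f \otimes y))(z) = (C_{\psi,\varphi}f)(z)\, y$ already recovers a scalar multiple of $y$, so instead one argues: since $C_{\psi,\varphi}$ maps the constant function $1$ to some $h \in E_2$ with $h(z_0) \ne 0$ for some $z_0$ (handling the degenerate case $C_{\psi,\varphi} \equiv 0$ trivially, as then $\widetilde{C}_{\psi,\varphi} \equiv 0$ is compact and $\dim Y$ is irrelevant --- this edge case should be flagged), the composition $Y \xrightarrow{P_1^{Y}} WE_1(Y) \xrightarrow{\widetilde{C}_{\psi,\varphi}} WE_2(Y) \xrightarrow{\widetilde\delta_{z_0}} Y$, $y \mapsto 1\otimes y \mapsto \widetilde{C}_{\psi,\varphi}(1\otimes y) \mapsto h(z_0)y$, equals $h(z_0) I_Y$; since the outer maps are bounded (using (we3') from almost-normingness for $\widetilde\delta_{z_0}$) and the middle is compact, $I_Y$ is compact. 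The ``if'' direction: if $\dim Y < \infty$, write $Y = \C^d$; then $WE_2(Y) \cong E_2^{\,d}$ isometrically-up-to-equivalence (choosing a basis), and under this identification $\widetilde{C}_{\psi,\varphi}$ is the diagonal operator $\mathrm{diag}(C_{\psi,\varphi}, \dots, C_{\psi,\varphi})$ on $E_1^{\,d} \to E_2^{\,d}$ via the analogous identification of $WE_1(Y)$; a finite diagonal block of a compact operator is compact. For (b) one repeats the same two factorization arguments with ``compact'' replaced by ``weakly compact'', using that weak compactness is inherited by restrictions/factorizations and that a finite diagonal sum of weakly compact operators is weakly compact.

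\textbf{Main obstacle.} The genuinely delicate point is the ``if'' half of (2)(a)--(b) and, dually, making the factorization $I_Y = (\text{const})^{-1}\,\widetilde\delta_{z_0} \circ \widetilde{C}_{\psi,\varphi} \circ P_1^Y$ honest: one needs to verify that the point evaluation $\widetilde\delta_{z_0}\colon WE_2(Y) \to Y$ is genuinely bounded (this is exactly where ``almost norming'' is used, via (we3')), that the isomorphism $WE(\C^d) \cong E^{\,d}$ is a \emph{topological} isomorphism with constants independent of the operator (so compactness transfers), and that the degenerate case $C_{\psi,\varphi} \equiv 0$ --- and more subtly, the case where $C_{\psi,\varphi} \not\equiv 0$ but $(C_{\psi,\varphi}1)(z) \equiv 0$ (which forces choosing a different test function than the constant $1$, e.g. some $f_0 \in E_1$ with $(C_{\psi,\varphi}f_0)(z_0)\ne 0$, which exists whenever $C_{\psi,\varphi}\ne 0$) --- is disposed of cleanly. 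None of these is deep, but each requires the explicit constants from Proposition~\ref{Prop2.1} and Theorem~\ref{thm_1.1} to be tracked, so the write-up should lean on those two results rather than re-derive anything. Since Theorem~\ref{thm_WO} is stated as following ``by an argument analogous to that used for the proof of Proposition 2.1 in \cite{Qu}'', the cleanest exposition simply records the three intertwining identities, invokes Proposition~\ref{Prop2.1} and Theorem~\ref{thm_1.1}, and points to \cite{Qu} for the routine parts.
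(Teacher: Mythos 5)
Your overall strategy is the same as the paper's: the paper proves Theorem~\ref{thm_WO} only by recording the factorization $C_{\psi,\varphi}=Q_w\circ\widetilde C_{\psi,\varphi}\circ P_y$ and invoking the argument of \cite{Qu}, and the intertwining identities you isolate, $w\circ\widetilde C_{\psi,\varphi}g=C_{\psi,\varphi}(w\circ g)$ and $\widetilde C_{\psi,\varphi}(f\otimes y)=(C_{\psi,\varphi}f)\otimes y$, together with (we1), (we2) and (we3'), are exactly the ingredients that argument rests on. Part (1), the ``only if'' halves of (2)(a)--(b) via $I_Y=c^{-1}\,\widetilde\delta_{z_0}\circ\widetilde C_{\psi,\varphi}\circ P_{f_0}$ with $(C_{\psi,\varphi}f_0)(z_0)=c\neq0$, and the ``if'' half of (2)(a) are all handled correctly (note only that $P_{f_0}\colon y\mapsto f_0\otimes y$ is a variant of (we1) whose boundedness should be checked directly, which is immediate from $q_W(y)\le\|y\|$). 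Your remark that the degenerate case $C_{\psi,\varphi}\equiv 0$ falsifies the ``only if'' of (2)(a) as literally stated is a fair observation about the statement rather than a defect of your proof.

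The genuine gap is the ``if'' half of (2)(b). Weak compactness of $I_Y$ is equivalent to reflexivity of $Y$, not to $\dim Y<\infty$, so for $Y=\ell^2$ (say) the identification $WE(\C^d)\cong E^{\,d}$ and the ``finite diagonal sum of weakly compact operators'' argument you propose are simply unavailable; your sketch only covers the finite-dimensional case, which for weak compactness is not the general one. To show that compactness of $C_{\psi,\varphi}$ together with reflexivity of $Y$ forces $\widetilde C_{\psi,\varphi}$ to be weakly compact, one has to pass through the linearization of Theorem~\ref{thm_1.1}: $\widetilde C_{\psi,\varphi}$ is conjugate to the right-multiplication map $T\mapsto T\circ S$ from $L({}^*\!E_1,Y)$ to $L({}^*\!E_2,Y)$, where $S\in L({}^*\!E_2,{}^*\!E_1)$ is the (compact, since $C_{\psi,\varphi}$ is) pre-adjoint of the scalar operator, and then invoke the operator-theoretic fact that right multiplication by a compact operator on spaces of operators with reflexive target space is weakly compact (for rank-one $S$ this is precisely weak compactness of the unit ball of $Y$, and the general case follows by approximation). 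That ingredient is not a formal consequence of the factorization identities you list, and as written this direction of (b) does not follow from your argument.
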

\begin{rmk} In the case $ W $ is separating and $ Y $ is separable, we have $ W $ is almost norming and the identity operator $ I_Y $ is weakly compact. Then as in (b) we get $ \widetilde{C}_{\psi,\varphi} $ is weakly compact  if $ C_{\psi,\varphi} $ is compact.  
	\end{rmk}
\subsection{The Bloch-type spaces on the unit ball of a Hilbert space}
\setcounter{equation}{0}

Throughout the forthcoming, unless otherwise specified, we shall denote by $X$   a complex Hilbert space with the open unit ball $ B_X $ and $ Y $ a Banach space. Denote
\[ H^\infty(B_X, Y) =  \Big\{f \in H(B_X,Y): \ \sup_{z \in B_X}\|f(z)\| <\infty\Big\}. \]

It is easy to check that $ H^\infty(B_X,Y) $ is Banach under the sup-norm
\[ \|f\|_\infty := \sup_{z \in B_X}\|f(z)\|. \]

Let $ (e_k)_{k \in \Gamma}$ be an orthonormal basis of $ X $ that we fix at once. Then every $ z \in X $ can be written as \[ z = \sum_{k \in \Gamma}z_ke_k, \quad  \overline{z} = \sum_{k \in \Gamma}\overline{z_k}e_k.\]

For $\varphi \in S(B_X),$  the set of  holomorphic self-maps on $ B_X$ we write $ \varphi(z) = \sum_{k\in \Gamma}\varphi_k(z) $ and $ \varphi'(z): X \to X$ its derivative at $ z, $ and $ R\varphi(z) = \langle \varphi'(z),\overline{z}\rangle $ its radial derivative at $ z. $


Given $ f \in H(B_X,Y) $ and $ z \in B_X. $  We define
\[\begin{aligned}
	\|\nabla f(z)\| &:= \sup_{u\in Y', \|u\|=1}\|\nabla(u\circ f)(z)\|, \\
	\|Rf(z)\|&:=  \sup_{u\in Y', \|u\|=1}|R(u\circ f)(z)|,
\end{aligned}\]
where
\[ R(u\circ f)(z) = \langle\nabla(u\circ f)(z), \overline{z}\rangle. \]
As above, it is obvious that $ \|Rf(z)\| \le \|\nabla f(z)\|\|z\| <  \|\nabla f(z)\| $ for every $ z \in B_X. $



\begin{defn}\label{weight}
	A positive continuous function $ \mu $ on the interval $ [0,1) $ is called normal  if there are  three constants $0 \le \delta < 1$ and $0 < a < b < \infty$ such that
	\begin{equation}\label{w_1}
		\frac{\mu(t)}{(1 - t)^{a}} \ \text{is decreasing on $[\delta, 1)$},\quad \lim_{t\to1}\frac{\mu(t)}{(1 - t)^{a}}=0, \tag{$ W_1 $}
	\end{equation}
	\begin{equation}\label{w_2}
		\frac{\mu(t)}{(1 - t)^{b}} \ \text{is increasing on $[\delta, 1)$},\quad \lim_{t\to1}\frac{\mu(t)}{(1 - t)^{b}} =\infty. \tag{$ W_2 $}
	\end{equation}
	If we say that a function $ \mu: B_X \to [0, \infty) $ is normal, we also assume that it is radial, that is, $ \mu(z) = \mu(\|z\|) $ for every $ z\in B_X. $
	
%
%
%
	
	Note that, since $ \mu $ is positive, continuous, $ m_\mu := \min_{t\in [0,\delta]}\mu(t) >0. $ Moreover, it follows from ($W_1$) that $ \mu  $ is increasing on $ [\delta,1),$ hence, we obtain that $  \max_{t \in [0,1)}\mu(t) =: M_\mu <\infty. $
	Then, it is easy to check that
	\begin{equation}\label{R_mu}
		\mu(z)\int_0^{\|z\|}\frac{dt}{\mu(t)} < R_\mu :=  \delta \frac{M_\mu}{m_\mu}+ 1-\delta <\infty \quad \text{for every}\ z \in B_X.
	\end{equation} 
\end{defn}
%

Throughout this paper, the weight $ \mu  $ always is assumed  to be   normal.  In the sequel,  when no confusion can arise,   we will use the symbol  $\lozenge$ to denote either $\nabla$ or $R.$


We define  Bloch-type spaces   on the unit ball $ B_X $ as follows:
\[
\mathcal B^\lozenge_\mu(B_X,Y) := \Big\{f \in H(B_X,Y):\ 
\|f\|_{s\mathcal B_\mu^\lozenge(B_X,Y)}:= \sup_{z \in B_X}\mu(z)\|\lozenge f(z)\| <\infty\Big\}. \]
It is easy to check $ \|\cdot\|_{s\mathcal B^\lozenge_\mu(B_X,Y)}  $  is a semi-norm on $  \mathcal B_\mu^\lozenge(B_X,Y) $  and this space is Banach under the sup-norm
\[
\|f\|_{\mathcal B_\mu^\lozenge(B_X,Y)} := \|f(0)\| + \|f\|_{s\mathcal B_\mu^\lozenge(B_X,Y)}.\]
We also define  little Bloch-type spaces   on the unit ball $ B_X $ as follows:
\[
\mathcal B^{\lozenge}_{\mu,0}(B_X,Y) := \Big\{f \in \mathcal B_\mu^\lozenge(B_X,Y):\ \lim_{\|z\|\to 1}\mu(z)\|\lozenge f(z)\| = 0\Big\} \]
endowed with the norm induced by $ \mathcal B_\mu^\lozenge(B_X,Y). $ 

In the case $ Y=\C $ we write $ \mathcal{B}_\mu^\lozenge(B_X),$  $ \mathcal{B}_{\mu,0}^\lozenge(B_X)$  instead of the respective notations.

It is clear that for every separating subspace $ W $ of $ Y $ we have
\[\mathcal B_\mu^\lozenge(B_X,Y) \subset WB_\mu^\lozenge(B_X)(Y),\quad   \mathcal B^{\lozenge}_{\mu,0}(B_X,Y) \subset W\mathcal B^{\lozenge}_{\mu,0}(B_X)(Y), \] 
%
%

For $ \mu(z) = 1 -\|z\|^2 $ we write $ \mathcal B^\lozenge(B_X, Y) $ instead of $ \mathcal B^\lozenge_\mu(B_X, Y) $ and when $ \dim X = m, $ $ Y =\C $ we obtain correspondingly the classical Bloch space $ \mathcal B^\lozenge(\BB_m). $
	
We will show below that the study of  Bloch-type spaces   on the unit ball can be reduced to studying functions defined on finite dimensional subspaces.

For each $ m \in \N $ we denote
\[ z_{[m]} := (z_1, \ldots,z_m) \in \BB_m\]
where $ \BB_m $ is the open unit ball in $ \C^m. $ For $ m \ge 2 $ by
\[ OS_m := \{x = (x_1, \ldots, x_m),\ x_k \in X, \langle x_k, x_j\rangle = \delta_{kj}\} \]
we denote the family of orthonormal systems of order $ m. $
 
It is clear that $ OS_1 $ is the unit sphere of $ X. $

For every $ x \in OS_m, $ $ f \in H(B_X,Y) $ we define
\[ f_x(z_{[m]})  = f\Big(\sum_{k=1}^mz_kx_k\Big).\]
Then  
\[ \nabla(u\circ f_x)(z_{[m]}) = \Big(\dfrac{\partial (u\circ f_x)}{\partial z_j}\Big(\sum_{k=1}^mz_kx_k\Big)\Big)_{j \in \Gamma}\quad \text{for every} \  u \in Y', \]
and hence
\begin{equation}\label{eq_2.1}
\Big\|\nabla f_x(z_{[m]})\Big\| =\Big\|\nabla f\Big(\sum_{k=1}^mz_kx_k\Big)\Big\|.
\end{equation} 
Now, for each finite subset $ F \subset \Gamma, $ in symbol $ |F| <\infty, $ we denote by $ \BB_{[F]} $ the unit ball of $ \text{span}\{e_k,\ k \in F\} $ and  $ f_F = f_x $ where $ x = \{e_k,\ k \in F\}.$  
For each $ z \in B_X $ and each $ F \subset \Gamma $ finite we write
\[ z_F = \sum_{k \in F}z_ke_k \in \BB_{[F]}. \]
\begin{defn} Let $ \BB_1 $ be the open unit ball in $ \C $ and $ f \in H(B_X,Y). $
	We define an affine semi-norm as follows
	\[ 	\|f\|_{s\mathcal{B}_\mu^{\rm aff}(B_X,Y)} := \sup_{\|x\|=1} \|f(\cdot\, x)\|_{s \mathcal B_\mu(\BB_1,Y)} 
	\]
	where $ f(\cdot\, x): \BB_1 \to Y $ given by $ f(\cdot\, x)(\lambda) = f(\lambda x)$ for every $ \lambda \in \BB_1, $ and
	\[ \|f(\cdot\, x)\|_{s \mathcal B_\mu^R(\BB_1,Y)} =\sup_{\lambda \in \BB_1}\mu(\lambda x)\|f'(\cdot\, x)(\lambda)\|. \]
	
	It is easy to see that  $\|\cdot\|_{s\mathcal{B}_\mu^{\rm aff}(B_X,Y)} $ is a semi-norm on $ \mathcal B_\mu(B_X,Y). $ We denote
	\[  \mathcal B_\mu^{\rm aff}(B_X,Y) := \{f \in \mathcal B_\mu (B_X,Y):\ \|f\|_{s\mathcal{B}_\mu^{\rm aff}(B_X,Y)}  <\infty\}. 
	\]
	It is also easy to check that   $ \mathcal B_\mu^{\rm aff}(B_X,Y) $ is Banach under the  norm
	\[\|f\|_{\mathcal B_\mu^{\rm aff}(B_X,Y)}:= \|f(0)\| + \|f\|_{s\mathcal B_\mu^{\rm aff}(B_X,Y)}.
	\]
	
	We also define  little affine Bloch-type spaces   on the unit ball $ B_X $ as follows:
	\[ \mathcal B_{\mu,0}^{\rm aff}(B_X,Y) := \big\{f \in \mathcal B_\mu^{\rm aff}(B_X,Y): \lim_{|\lambda| \to 1}\sup_{\|x\|=1}\mu(\lambda x)\|f'(\cdot\, x)(\lambda)\|=0\big\}. \]
	 As the  above, for $ \mu(z) = 1 - \|z\|^2 $ we use notation $ \mathcal B $ instead of $ \mathcal B_\mu. $
 \end{defn}
Now, let us recall a number of results proved in \cite{Qu}. 

\begin{prop}[\cite{Qu}, Proposition 4.1]\label{prop_OSm}
	Let $ f \in H(B_X,Y). $ The following are equivalent:
	\begin{enumerate}[\rm(1)]
		\item $ f \in \mathcal{B}_\mu^\nabla(B_X,Y); $
		\item $ \sup\{\|f_F\|_{\mathcal B_\mu^\nabla(\BB_{[F]},Y)}: \ F \subset \Gamma, |F| < \infty\}<\infty;$
\item $ \sup_{x \in OS_m}\|f_x\|_{\mathcal{B}_\mu^\nabla(\BB_m,Y)} < \infty  $ \ for every $ m \ge 2; $
\item There exists $ m \ge 2 $ such that $ \sup_{x \in OS_m}\|f_x\|_{\mathcal{B}_\mu^\nabla(\BB_m,Y)} < \infty.  $
	\end{enumerate}
Moreover, for each $ m\ge 2 $
\begin{equation}\label{eq_Prop4.1}
\|f\|_{s\mathcal{B}_\mu^\nabla(B_X,Y)} = \sup_{|F|<\infty}\|f_F\|_{s\mathcal{B}_\mu^\nabla(\BB_{[F]},Y)}= \sup_{x\in OS_m}\|f_x\|_{s\mathcal{B}_\mu^\nabla(\BB_m,Y)}. 
\end{equation}  
\end{prop}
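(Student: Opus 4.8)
The plan is to rewrite every seminorm occurring in the statement as the supremum of one and the same scalar quantity, and then to read off the displayed identity \eqref{eq_Prop4.1} by a dimension count; the equivalence of $(1)$--$(4)$, being the corresponding finiteness assertions, will then follow at once (and since $f_x(0)=f_F(0)=f(0)$, the constant terms in all the norms agree, so only the seminorms require attention). The first step I would carry out is to observe that, $X$ being a Hilbert space and $u\circ f$ holomorphic for $u\in Y'$ — so that $D(u\circ f)(z)=u\circ Df(z)$ is a \emph{$\C$-linear} functional on $X$ — the Riesz representation theorem gives $\|\nabla(u\circ f)(z)\|=\|D(u\circ f)(z)\|_{X'}$, and hence, together with the definition of $\|\nabla f(z)\|$ and the Hahn--Banach theorem,
\[
\|\nabla f(z)\|=\|Df(z)\|_{\mathcal L(X,Y)},\qquad\text{so that}\qquad
\|f\|_{s\mathcal B_\mu^\nabla(B_X,Y)}=\sup_{z\in B_X}\mu(\|z\|)\,\|Df(z)\|_{\mathcal L(X,Y)}.
\]
Next, for $x\in OS_m$ I would introduce the $\C$-linear isometry $\iota_x\colon\C^m\to X$, $\iota_x(a)=\sum_{k=1}^m a_kx_k$, onto the $m$-dimensional subspace $V_x:=\operatorname{span}\{x_1,\dots,x_m\}$. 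Since $f_x=f\circ\iota_x$, one has $Df_x(\zeta)=Df(\iota_x\zeta)\circ\iota_x$, so $\|Df_x(\zeta)\|_{\mathcal L(\C^m,Y)}=\|Df(w)|_{V_x}\|$ with $w=\iota_x\zeta$ and $\|\zeta\|=\|w\|$, whence
\[
\|f_x\|_{s\mathcal B_\mu^\nabla(\BB_m,Y)}=\sup\{\mu(\|w\|)\,\|Df(w)|_{V_x}\|:\ w\in V_x\cap B_X\},
\]
and similarly $\|f_F\|_{s\mathcal B_\mu^\nabla(\BB_{[F]},Y)}=\sup\{\mu(\|w\|)\,\|Df(w)|_{X_F}\|:\ w\in X_F\cap B_X\}$, where $X_F:=\operatorname{span}\{e_k:k\in F\}$.

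With these formulas in hand, two of the inequalities in \eqref{eq_Prop4.1} are immediate, since $\|Df(w)|_V\|\le\|Df(w)\|$ for every subspace $V$: this yields $\sup_{|F|<\infty}\|f_F\|_{s}\le\|f\|_{s\mathcal B_\mu^\nabla(B_X,Y)}$ and $\sup_{x\in OS_m}\|f_x\|_{s}\le\|f\|_{s\mathcal B_\mu^\nabla(B_X,Y)}$, and in particular $(1)\Rightarrow(2),(3),(4)$. For the reverse inequality in the $OS_m$ term I would fix $m\ge 2$ and argue as follows: given $w\in B_X$ with $\|Df(w)\|>0$ and $\varepsilon>0$, choose $v\in X$ with $\|v\|\le1$ and $\|Df(w)v\|>\|Df(w)\|-\varepsilon$; since $\dim\operatorname{span}\{w,v\}\le 2\le m$ and $\dim X=\infty$, this span is contained in some $m$-dimensional subspace $V$, and taking $x\in OS_m$ to be an orthonormal basis of $V$ gives $\mu(\|w\|)\|Df(w)|_{V_x}\|\ge\mu(\|w\|)\|Df(w)v\|>\mu(\|w\|)(\|Df(w)\|-\varepsilon)$. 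Letting $\varepsilon\to0$ and taking the supremum over $w\in B_X$ yields $\sup_{x\in OS_m}\|f_x\|_{s}\ge\|f\|_{s\mathcal B_\mu^\nabla(B_X,Y)}$, hence equality. This is exactly the place where $m\ge 2$ is used — one dimension for the base point, one for a near-optimal direction — and it establishes $(1)\Leftrightarrow(3)\Leftrightarrow(4)$ together with the last member of \eqref{eq_Prop4.1}.

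The finite-$F$ term would be treated in the same way, except that an approximation step is needed because an arbitrary $w\in B_X$ (and the direction $v$ above) need not be supported on finitely many coordinates. Given $w,v$ and $\varepsilon>0$, I would pick a finite $F\subset\Gamma$ so large that the truncations $w_F:=\sum_{k\in F}w_ke_k$ and $v_F$ are close enough to $w$ and $v$ that $\mu(\|w_F\|)\,\|Df(w_F)v_F\|>\mu(\|w\|)\|Df(w)v\|-\varepsilon$; this uses the continuity of $\mu$ on $[0,1)$ and, crucially, the continuity of the derivative map $z\mapsto Df(z)\in\mathcal L(X,Y)$ (the derivative of a holomorphic map is again holomorphic, hence continuous). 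Since $w_F,v_F\in X_F$ with $\|w_F\|\le\|w\|<1$ and $\|v_F\|\le\|v\|\le1$, one gets $\|f_F\|_{s}\ge\mu(\|w_F\|)\|Df(w_F)v_F\|$, so that, letting $\varepsilon\to0$ and taking the supremum over $w$, $\sup_{|F|<\infty}\|f_F\|_{s}\ge\|f\|_{s\mathcal B_\mu^\nabla(B_X,Y)}$; with the reverse bound this gives the remaining equality of \eqref{eq_Prop4.1} and hence $(1)\Leftrightarrow(2)$, and adding back the common constant term $\|f(0)\|$ upgrades the seminorm identities to the stated identities for $\|\cdot\|_{\mathcal B_\mu^\nabla}$. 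I expect the points requiring care to be, on the one hand, the bookkeeping in the two representation formulas — keeping straight the $\C$-linearity of $Df$ and the $\C$-linearity (not conjugate-linearity) of $\iota_x$, so that $\|\nabla f_x(\zeta)\|$ is cleanly identified with $\|Df(w)|_{V_x}\|$ — and, on the other hand, the above approximation argument, which is the only non-algebraic ingredient; the conceptual heart, by contrast, is simply that the $\nabla$-Bloch seminorm evaluates $Df(w)$ only against single directions $v$, each of which spans at most a plane together with $w$, which is precisely why every $m\ge 2$ (but, in general, not $m=1$) gives the same quantity.
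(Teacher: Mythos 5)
Your proof is correct, and the steps you flag as delicate do all go through: the identification $\|\nabla f(z)\|=\|Df(z)\|_{\mathcal L(X,Y)}$ via Riesz and Hahn--Banach, the formula $\|f_x\|_{s\mathcal B_\mu^\nabla(\BB_m,Y)}=\sup\{\mu(\|w\|)\,\|Df(w)|_{V_x}\|:\ w\in V_x\cap B_X\}$, the two-dimensional span trick for the lower bound, and the finite-support approximation (legitimate because $Df$ is again holomorphic, hence locally bounded and norm-continuous, and $w_F\to w$, $v_F\to v$ in norm along the net of finite subsets of $\Gamma$).

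Be aware, however, that the paper contains no proof of this statement to compare against: it is imported verbatim from \cite{Qu}, and the only related in-paper material is the preparatory display \eqref{eq_2.1}. That display, read literally, asserts the pointwise identity $\|\nabla f_x(z_{[m]})\|=\|\nabla f(\sum_{k}z_kx_k)\|$; if that were true, the proposition would follow for every $m\ge 1$ by a one-line supremum computation, contradicting the remark immediately after the proposition that the case $m=1$ fails. Your reading is the correct one: pointwise one only has $\|\nabla f_x(z_{[m]})\|=\|Df(w)|_{V_x}\|\le\|\nabla f(w)\|$, and equality of the suprema is recovered only after varying $x\in OS_m$, which is precisely where $m\ge 2$ enters (one dimension for the base point $w$, one for a near-optimal direction $v$). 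So your argument is, in effect, a self-contained and repaired substitute for the cited proof rather than a reproduction of anything in this paper; no gaps.
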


\begin{rmk} The proposition is not true for the case $ m=1. $ 
%
\end{rmk}

\begin{prop}[\cite{Qu}, Proposition 4.2]\label{prop_OSm_0}
	Let $ f \in H(B_X,Y). $ The following are equivalent:
	\begin{enumerate}[\rm(1)]
		\item $ f \in \mathcal{B}_{\mu,0}^\nabla(B_X,Y); $
		\item $\forall \varepsilon>0\ \exists \varrho>0\ \forall z \in B_X\ \text{with}\ \|z_{[F]}\|>\varrho$ for every $ F\subset \Gamma, |F|<\infty $
		$$ \sup_{F\subset\Gamma, |F|<\infty}\mu(z_{[F]})\|\nabla f_F(z_{[F]})\|<\varepsilon;$$
		\item  $\forall \varepsilon>0\ \exists \varrho>0\ \forall z \in B_X\ \text{with}\ \|z_{[m]}\|>\varrho$ for every $ m\ge2 $
		$$\sup_{m\ge2} \sup_{x \in OS_m}\mu(z_{[m]})\|\nabla f_x(z_{[m]})\|<\varepsilon; $$
		\item  $\exists  m \ge 2 \ \forall \varepsilon>0\ \exists \varrho>0\ \forall z \in B_X\ \text{with}\ \|z_{[m]}\|>\varrho$
		$$\sup_{x \in OS_m}\mu(z_{[m]})\|\nabla f_x(z_{[m]})\|<\varepsilon.$$
	\end{enumerate}
\end{prop}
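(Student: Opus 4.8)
The plan is to prove the four statements equivalent through the cycle $(1)\Rightarrow(2)\Rightarrow(3)\Rightarrow(4)\Rightarrow(1)$, in which $(3)\Rightarrow(4)$ is trivial and essentially all the work is in $(4)\Rightarrow(1)$. The guiding observation is that $(2)$, $(3)$ and $(4)$ are really just restatements, in terms of finite-dimensional slices, of the ambient decay condition $\mu(z)\|\nabla f(z)\|\to 0$ as $\|z\|\to1$; the substantive content of the proposition is that this decay already implies $f\in\mathcal B_\mu^\nabla(B_X,Y)$ (the analogue for the full space being Proposition~\ref{prop_OSm}). The ingredients I would use are: equation \eqref{eq_2.1} (for $m\ge2$, restricting to an $m$-dimensional orthonormal slice does not diminish the gradient norm), the radiality of $\mu$ and the fact that the embedding $z_{[m]}\mapsto\sum_{k=1}^m z_kx_k$ attached to $x\in OS_m$ is an isometry, the bounds $0<m_\mu\le\mu\le M_\mu<\infty$ from Definition~\ref{weight}, the continuity on $B_X$ of $z\mapsto\|\nabla f(z)\|$ (equivalently of $z\mapsto Df(z)$), and the elementary fact that, since $\dim X=\infty$ and $m\ge2$, any nonzero $w\in X$ and any $v\in X$ lie together in the span of some $x\in OS_m$. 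This last point is precisely why $m\ge2$ is needed, i.e.\ why the finite-dimensional slices in the hypotheses "see" the full derivative rather than merely a radial one, unlike for $m=1$.

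The three easy implications I would dispatch as follows. For $(1)\Rightarrow(2)$: given $\varepsilon>0$, choose $\varrho$ with $\mu(z)\|\nabla f(z)\|<\varepsilon$ for $\|z\|>\varrho$; then for finite $F$ and $z_{[F]}\in\BB_{[F]}$ with $\|z_{[F]}\|>\varrho$ one has $\|\nabla f_F(z_{[F]})\|\le\|\nabla f(z_F)\|$, $\mu(z_{[F]})=\mu(z_F)$ and $\|z_F\|>\varrho$, whence $\mu(z_{[F]})\|\nabla f_F(z_{[F]})\|<\varepsilon$. For $(2)\Rightarrow(3)$: for $x\in OS_m$ and $w=\sum_{k=1}^m z_kx_k$, approximate $w$ in norm (and, to recover the full operator norm of $Df(w)$, an almost-extremal direction for it) by vectors supported on finitely many basis indices, apply $(2)$ to those, and pass to the limit using continuity of $Df$ and $\mu$; this gives $\mu(z_{[m]})\|\nabla f_x(z_{[m]})\|\le\mu(w)\|\nabla f(w)\|\le\varepsilon$. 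And $(3)\Rightarrow(4)$ is immediate.

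The core is $(4)\Rightarrow(1)$, which I would carry out in three steps. \emph{Step A (shell estimate).} Apply $(4)$ with $\varepsilon=1$ to get $\varrho_0\in(0,1)$ so that $\mu(z_{[m]})\|\nabla f_x(z_{[m]})\|<1$ whenever $x\in OS_m$ and $\|z_{[m]}\|>\varrho_0$. Given $w$ with $\varrho_0<\|w\|<1$, pick $x\in OS_m$ whose span contains $w$ together with an almost-extremal direction of $Df(w)$; then by \eqref{eq_2.1} and radiality $\mu(w)\|\nabla f(w)\|$ is arbitrarily close to such a slice quantity, hence $\le1$, so $\|\nabla f(w)\|\le1/\mu(w)\le1/m_\mu$; by continuity this persists on $\|w\|=\varrho_0$. \emph{Step B (propagation inward).} For $\|w\|\le\varrho_0$, fix unit vectors $u\in Y'$, $v\in X$ and apply the classical maximum modulus principle to the one-variable holomorphic function $\zeta\mapsto u\big(Df(\zeta w/\|w\|)v\big)$ on the unit disc; it is bounded by $1/m_\mu$ on $|\zeta|=\varrho_0$, hence on $|\zeta|\le\varrho_0$, and taking suprema over $u,v$ yields $\|\nabla f(w)\|\le1/m_\mu$ there too. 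Thus $\|\nabla f\|\le1/m_\mu$ on all of $B_X$, so $\sup_z\mu(z)\|\nabla f(z)\|\le M_\mu/m_\mu<\infty$ and $f\in\mathcal B_\mu^\nabla(B_X,Y)$. \emph{Step C (decay).} For arbitrary $\varepsilon>0$ take $\varrho$ from $(4)$; for $\|z\|>\varrho$ pick $x\in OS_m$ whose span contains $z$ and an almost-extremal direction of $Df(z)$, so that $\mu(z)\|\nabla f(z)\|$ is arbitrarily close to $\mu(z_{[m]})\|\nabla f_x(z_{[m]})\|<\varepsilon$. Steps B and C together give $f\in\mathcal B_{\mu,0}^\nabla(B_X,Y)$, which is $(1)$.

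The step I expect to be the real obstacle is the assertion, inside $(4)\Rightarrow(1)$, that the hypotheses force $f\in\mathcal B_\mu^\nabla(B_X,Y)$ — that is, that boundary decay of $\mu\|\nabla f\|$ yields a globally finite Bloch semi-norm. The delicacy is that a holomorphic function on $B_X$ need not be bounded, let alone have bounded derivative, on a smaller ball $rB_X$ (e.g.\ $\sum_n 2^n z_n^n$ on $B_{\ell^2}$ is unbounded on $\{\|z\|\le 0.6\}$), so the estimate near the origin cannot come from continuity or compactness. What saves the argument is that $(4)$, because it ranges over \emph{all} $x\in OS_m$, actually controls $\|\nabla f\|$ on an entire spherical shell adjacent to the unit sphere, after which the maximum modulus principle applied along complex lines through $0$ carries the bound into the interior. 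Pinning down Step A precisely — recovering the operator-norm gradient $\|\nabla f(w)\|$, and not merely some directional derivative, from the slice estimates of $(4)$ — is exactly the place where $m\ge2$ is essential.
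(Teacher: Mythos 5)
The paper does not prove this proposition; it is quoted from \cite{Qu}, so there is no internal proof to compare your attempt against. Judged on its own terms, your argument is correct and runs on precisely the machinery the paper's framework supplies: the $OS_m$ slices together with the observation that, for $m\ge 2$, a point $w$ and a near-extremal direction of $Df(w)$ can be placed in the span of a single $x\in OS_m$, so that the supremum over $x$ of the slice gradients recovers the full $\|\nabla f(w)\|$ (the content of (\ref{eq_Prop4.1})); and you correctly isolate, and correctly handle, the one non-routine step, namely upgrading the boundary decay in (4) to global membership in $\mathcal B_\mu^\nabla(B_X,Y)$ via the maximum modulus principle along complex lines through $0$ --- which is genuinely needed, since a holomorphic function on an infinite-dimensional ball need not have bounded derivative on $rB_X$. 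One small slip in Step A: the inequality $1/\mu(w)\le 1/m_\mu$ is false in general, because a normal weight is decreasing on $[\delta,1)$ and typically tends to $0$ at the boundary, so $\mu(w)$ may be far smaller than $m_\mu=\min_{t\in[0,\delta]}\mu(t)$; this is harmless, however, since Step B only uses the bound on the fixed sphere $\|w\|=\varrho_0$, where the correct constant is $1/\mu(\varrho_0)$, and the finiteness of $\sup_{z}\mu(z)\|\nabla f(z)\|$ then follows from $\mu\|\nabla f\|\le 1$ on the outer shell combined with $\|\nabla f\|\le 1/\mu(\varrho_0)$ on the inner ball.
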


\begin{thm}[\cite{Qu}, Theorem 4.7]\label{thm_main}
	The spaces $ \mathcal{B}_\mu^\nabla(B_X,Y), $ $ \mathcal{B}_\mu^R(B_X,Y) $ and $ \mathcal{B}_\mu^{\rm aff}(B_X,Y)$ coincide. The spaces $ \mathcal{B}_{\mu,0}^\nabla(B_X,Y), $ $ \mathcal{B}_{\mu,0}^R(B_X,Y) $ and $ \mathcal{B}_{\mu,0}^{\rm aff}(B_X,Y)$ coincide. Moreover,
	\[ \|f\|_{\mathcal B_\mu^R(B_X,Y)} \le \|f\|_{\mathcal B_\mu^\nabla(B_X,Y)} \le 2\sqrt{2}R_\mu\|f\|_{\mathcal B_\mu^{\rm aff}(B_X,Y)}.  \]
\end{thm}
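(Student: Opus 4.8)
The plan is to prove the two non-elementary estimates
\[
\|f\|_{\mathcal B^\nabla_\mu(B_X,Y)}\le 2\sqrt2\,R_\mu\,\|f\|_{\mathcal B^{\rm aff}_\mu(B_X,Y)}
\quad\text{and}\quad
\|f\|_{\mathcal B^{\rm aff}_\mu(B_X,Y)}\le C(\mu)\,\|f\|_{\mathcal B^R_\mu(B_X,Y)},
\]
and then to combine them with the two elementary ones $\|f\|_{\mathcal B^R_\mu}\le\|f\|_{\mathcal B^\nabla_\mu}$ and $\|f\|_{\mathcal B^R_\mu}\le\|f\|_{\mathcal B^{\rm aff}_\mu}$; the resulting cycle of inequalities forces the three underlying sets to coincide with pairwise equivalent norms, the displayed chain recording the sharp constants. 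The first elementary inequality is immediate from the pointwise bound $\|Rh(z)\|\le\|z\|\,\|\nabla h(z)\|$ recorded above, after applying $\sup_z\mu(z)(\cdot)$ and adding $\|f(0)\|$. For the second, fix $z\neq0$, put $\hat z=z/\|z\|\in OS_1$ and $\lambda=\|z\|$; since $f'(\cdot\,\hat z)(\lambda)=Df(z)\hat z=\|z\|^{-1}Rf(z)$, we obtain $\mu(z)\|Rf(z)\|=\|z\|\cdot\mu(\lambda\hat z)\,\|f'(\cdot\,\hat z)(\lambda)\|\le\|f\|_{s\mathcal B^{\rm aff}_\mu}$. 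In both cases the boundary limit condition transfers to the right-hand space because the superfluous factor $\|z\|$ tends to $1$; in particular $\mathcal B^\nabla_{\mu,0},\mathcal B^{\rm aff}_{\mu,0}\subseteq\mathcal B^R_{\mu,0}$.

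For the main estimate I would reduce to two complex dimensions. By Proposition~\ref{prop_OSm} and~(\ref{eq_Prop4.1}), $\|f\|_{s\mathcal B^\nabla_\mu(B_X,Y)}=\sup_{x\in OS_2}\|f_x\|_{s\mathcal B^\nabla_\mu(\BB_2,Y)}$ (and finiteness of this sup already places $f$ in $\mathcal B^\nabla_\mu(B_X,Y)$); moreover, since an orthonormal system $x$ carries complex lines through $0$ in $\BB_2$ onto complex lines through $0$ in $B_X$ with norms preserved, $\|f_x\|_{s\mathcal B^{\rm aff}_\mu(\BB_2,Y)}\le\|f\|_{s\mathcal B^{\rm aff}_\mu(B_X,Y)}$. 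Thus it suffices to show, for $g\in H(\BB_2,Y)$,
\[
\|g\|_{s\mathcal B^\nabla_\mu(\BB_2,Y)}\le 2\sqrt2\,R_\mu\,\|g\|_{s\mathcal B^{\rm aff}_\mu(\BB_2,Y)}=:2\sqrt2\,R_\mu\,N .
\]
Fix $z\neq0$ and, by unitary invariance of $\mu$, $R$ and $\nabla$, assume $z=(r,0)$ with $r=\|z\|$. For a unit functional $u$, $\|\nabla(u\circ g)(r,0)\|^2=|\partial_1(u\circ g)(r,0)|^2+|\partial_2(u\circ g)(r,0)|^2$. The first (radial) term is free: $\mu(r)\|\partial_1 g(r,0)\|=\tfrac{\mu(r)}{r}\|Rg(r,0)\|\le N$, because $\tfrac{\mu(q)}{\|q\|}\|Rg(q)\|\le N$ for every $q\neq0$ directly from the definition of $N$. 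The second (tangential) term is the heart of the matter: from $Rg(w)=w_1\partial_1 g(w)+w_2\partial_2 g(w)$ one verifies $\frac{d}{ds}\big(s\,\partial_2 g(s,0)\big)=\partial_2(Rg)(s,0)$, hence $r\,\partial_2 g(r,0)=\int_0^r\partial_2(Rg)(s,0)\,ds$. Estimating $\partial_2(Rg)(s,0)$ by a Cauchy integral over $|w_2|=\rho_s$ with $\rho_s\asymp1-s$, using $\|Rg(w)\|\le\frac{\|w\|}{\mu(\|w\|)}N$, and comparing $\mu(\|(s,w_2)\|)$ with $\mu(s)$ via the monotonicity conditions $(W_1)$--$(W_2)$, one gets $\|\partial_2(Rg)(s,0)\|\lesssim\frac{N}{(1-s)\mu(s)}$; then the auxiliary inequality $\int_0^r\frac{ds}{(1-s)\mu(s)}\lesssim\frac{r}{\mu(r)}$ — itself deduced from $(W_1)$--$(W_2)$, in the spirit of~(\ref{R_mu}) — yields $\mu(r)\|\partial_2 g(r,0)\|\lesssim N$. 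Collecting the two pieces and using $|\partial_1|^2+|\partial_2|^2\le2\max$, with $\rho_s$ chosen so as to absorb the normality constants, gives $\mu(r)\|\nabla(u\circ g)(r,0)\|\le 2\sqrt2\,R_\mu\,N$, as claimed.

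It remains to close the cycle with $\|f\|_{\mathcal B^{\rm aff}_\mu}\le C(\mu)\|f\|_{\mathcal B^R_\mu}$, a one-variable Cauchy estimate. For $\|z\|\ge\tfrac12$, $\tfrac{\mu(z)}{\|z\|}\|Rf(z)\|\le 2\mu(z)\|Rf(z)\|\le 2\|f\|_{s\mathcal B^R_\mu}$, so only $\|z\|\le\tfrac12$ matters; there $\tfrac{\mu(z)}{\|z\|}\|Rf(z)\|=\mu(z)\|f'(\cdot\,\hat z)(\|z\|)\|$, and since the radial derivative of $\lambda\mapsto f(\lambda\hat z)$ on $\BB_1$ is $\lambda f'(\cdot\,\hat z)(\lambda)$, the $R$-semi-norm bounds $\|f'(\cdot\,\hat z)(\zeta)\|$ on $|\zeta|=\tfrac34$ by a multiple of $\|f\|_{s\mathcal B^R_\mu}$, while the maximum principle applied to the $Y$-valued holomorphic map $\zeta\mapsto f'(\cdot\,\hat z)(\zeta)$ transfers this bound to $|\zeta|\le\tfrac12$; boundedness of $\mu$ on $[0,\tfrac34]$ then finishes it. This settles the big spaces. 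For the little spaces: the elementary inequalities already give $\mathcal B^\nabla_{\mu,0},\mathcal B^{\rm aff}_{\mu,0}\subseteq\mathcal B^R_{\mu,0}$; $\mathcal B^R_{\mu,0}\subseteq\mathcal B^{\rm aff}_{\mu,0}$ holds because $\tfrac{\mu(z)}{\|z\|}\|Rf(z)\|\to0$ if and only if $\mu(z)\|Rf(z)\|\to0$ as $\|z\|\to1$; and $\mathcal B^{\rm aff}_{\mu,0}\subseteq\mathcal B^\nabla_{\mu,0}$ follows by rerunning the two-dimensional argument with Proposition~\ref{prop_OSm_0} replacing Proposition~\ref{prop_OSm} and splitting $\int_0^r=\int_0^{r_0}+\int_{r_0}^r$: the first piece is $O(1)$, hence $o(1)$ after multiplication by $\mu(r)\to0$, and the second is controlled by $\sup_{\|q\|\ge r_0}\tfrac{\mu(q)}{\|q\|}\|Rf_x(q)\|$, which the little-oh hypothesis makes arbitrarily small.

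The single delicate point I anticipate is the tangential estimate above. A direct Cauchy estimate for $\nabla g$ on a Euclidean ball of radius $\asymp1-\|z\|$ about $z$ loses a factor $(1-\|z\|)^{-1}$ near the sphere; the identity $r\,\partial_2 g(r,0)=\int_0^r\partial_2(Rg)(s,0)\,ds$ serves two purposes — it makes the Cauchy estimate non-circular, the tangential derivative of $g$ being expressed through $Rg$, which is controlled directly, and the ensuing integration in $s$ converts $\frac{1}{(1-s)\mu(s)}$ into a quantity of order $\frac{1}{\mu(r)}$ rather than $\frac{1}{(1-r)\mu(r)}$, exactly recovering the lost factor. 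Carrying this through so that the constant is precisely $2\sqrt2 R_\mu$ — that is, tuning $\rho_s$ and the normality comparisons and establishing $\int_0^r\frac{ds}{(1-s)\mu(s)}\lesssim\frac{r}{\mu(r)}$ — is where essentially all of the computational work lies.
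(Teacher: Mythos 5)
This theorem is imported verbatim from \cite{Qu} (Theorem 4.7) and the present paper contains no proof of it, so there is nothing internal to compare you against; I can only judge the proposal on its own terms. Structurally it is sound, and every qualitative step checks out: the two elementary inequalities (the second via $f'(\cdot\,\hat z)(\|z\|)=\|z\|^{-1}Rf(z)$), the reduction to $\BB_2$ through Proposition \ref{prop_OSm} and (\ref{eq_Prop4.1}) together with $\|f_x\|_{s\mathcal B_\mu^{\rm aff}(\BB_2,Y)}\le\|f\|_{s\mathcal B_\mu^{\rm aff}(B_X,Y)}$, the identity $r\,\partial_2 g(r,0)=\int_0^r\partial_2(Rg)(s,0)\,ds$, the Cauchy estimate with radius $\rho_s\asymp 1-s$ combined with the comparison $\mu\big(\sqrt{s^2+\rho_s^2}\big)\gtrsim\mu(s)$ from $(W_2)$, the integral bound $\int_0^r\frac{ds}{(1-s)\mu(s)}\lesssim\frac{1}{\mu(r)}$ (again from $(W_2)$, splitting at $\delta$), the closing inequality $\|f\|_{\mathcal B_\mu^{\rm aff}}\lesssim\|f\|_{\mathcal B_\mu^{R}}$ by the one-variable maximum principle, and the little-oh transfers. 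This yields the coincidence of the three spaces (and of the three little spaces) with pairwise equivalent norms, which is all that the remainder of the paper actually uses.

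The genuine gap is the displayed constant $2\sqrt2\,R_\mu$, which is part of the assertion and which you explicitly postpone. This is not just unfinished bookkeeping: every constant your route hides in a $\lesssim$ — the ratio $\mu(s)/\mu\big(\sqrt{s^2+\rho_s^2}\big)$ and the bound on $\int_0^r\frac{ds}{(1-s)\mu(s)}$ — unavoidably involves the normality exponents $a,b$ of Definition \ref{weight}, whereas $R_\mu=\delta M_\mu/m_\mu+1-\delta$ from (\ref{R_mu}) depends only on $\delta$, $m_\mu$, $M_\mu$. So no amount of ``tuning $\rho_s$'' inside the scheme you describe can be expected to terminate at exactly $2\sqrt2\,R_\mu$; the natural place where $R_\mu$ enters is the growth estimate $\mu(z)\int_0^{\|z\|}\frac{dt}{\mu(t)}<R_\mu$ applied to function values along rays (as in (\ref{eq_5.1})), not to a Cauchy estimate on $\partial_2(Rg)$. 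If the equivalence of norms is what you need, your argument suffices once the deferred computations are written out; if the stated numerical bound is needed, the tangential estimate has to be reorganized around (\ref{R_mu}) rather than around the $(W_1)$--$(W_2)$ comparisons. A small additional remark: since $\mathcal B_\mu^{\rm aff}(B_X,Y)$ is by definition a subset of $\mathcal B_\mu(B_X,Y)$, the inclusion $\mathcal B_\mu^{\rm aff}\subseteq\mathcal B_\mu^{R}$ is free, and your ``cycle'' really only needs the three nontrivial seminorm inequalities you list.
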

Next, we present a M\"obius invariant norm for the  Bloch-type space $W\mathcal{B}(B_X,Y). $

M\"obius transformations on a Hilbert space $ X $ are the mappings $ \varphi_a,  $ $ a \in B_X, $  defined as follows:
\begin{equation}\label{eq_Mobius}
 \varphi_a(z) = \frac{a - P_a(z) - s_aQ_a(z)}{1 - \langle z, a\rangle}, \quad z \in \B_X
\end{equation} 
where $s_a = \sqrt{1 - \|a\|^2},$   $P_a$ is the orthogonal projection from $X$ onto the one dimensional subspace $[a]$ generated by a, and $Q_a$ is the orthogonal projection from $X$ onto $X \ominus [a].$ 

When $a = 0,$ we simply define $\varphi _a(z)=-z.$ 
It is obvious that each $\varphi_a$  is a holomorphic mapping from $B_X$ into $X.$

\begin{defn}
Let $ X $ be a complex Hilbert space, $ Y $ be a Banach space and $ f \in H(B_X,Y). $ The invariant gradient norm
\[ \|\widetilde{\nabla}f(z)\| := \|\nabla(f \circ \varphi_z)(0)\|\quad\text{for any $z \in B_X. $}  \]

	We define invariant semi-norm   as follows
	\[ 		\|f\|_{s \mathcal B^{\rm inv}(B_X,Y)}  := \sup_{z\in B_X}\|\widetilde\nabla f(z)\| =\sup_{z\in B_X}\sup_{u\in Y', \|u\|\le1}\|\widetilde\nabla (u\circ f)(z)\|. \]
  We denote
	\[  		\mathcal B^{\rm inv}(B_X,Y) := \{f \in \mathcal B (B_X,Y):\ \|f\|_{s \mathcal B^{\rm inv}(B_X,Y)}  <\infty\}. \]
	
	It is also easy to check that $  \mathcal B^{\rm inv}(B_X,Y) $   is Banach under the  norm 
	\[ 		\|f\|_{\mathcal B^{\rm inv}(B_X,Y)}  := \|f(0)\| + \|f\|_{s\mathcal B^{\rm inv}(B_X,Y)}. \] 
\end{defn}

\begin{thm}[\cite{Qu}, Theorem 4.9]\label{thm_main1}
	The spaces $ \mathcal{B}^\nabla(B_X,Y), $ and $ \mathcal{B}^{\rm inv}(B_X,Y) 
	$  coincide. Moreover,
	\[ \|f\|_{\mathcal B^\nabla(B_X,Y)} \le \|f\|_{\mathcal B^{\rm inv}(B_X,Y)} \lesssim \|f\|_{\mathcal B^\nabla(B_X,Y)}.  \]
\end{thm}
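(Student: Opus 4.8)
The plan is to reduce the statement to the scalar-valued case and to a single pointwise identity for the invariant gradient. Since for $ f\in H(B_X,Y) $ one has $ \|\nabla f(z)\|=\sup_{\|u\|\le1}\|\nabla(u\circ f)(z)\| $, $ \|Rf(z)\|=\sup_{\|u\|\le1}|R(u\circ f)(z)| $ and $ \|\widetilde\nabla f(z)\|=\sup_{\|u\|\le1}\|\widetilde\nabla(u\circ f)(z)\| $, it suffices to work with $ g:=u\circ f\in H(B_X) $; moreover $ \varphi_0(z)=-z $ gives $ \widetilde\nabla f(0)=-\nabla f(0) $, so the $ f(0) $-terms in the two norms coincide. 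From \eqref{eq_Mobius} one computes $ \varphi_z(0)=z $ and $ D\varphi_z(0)=-(1-\|z\|^2)P_z-\sqrt{1-\|z\|^2}\,Q_z $ (with $ D\varphi_0(0)=-I $), where $ P_z,Q_z $ are the projections of \eqref{eq_Mobius}. Hence, by the chain rule, $ \nabla(g\circ\varphi_z)(0)=(D\varphi_z(0))^{*}\overline{\nabla g(z)} $, and splitting $ \overline{\nabla g(z)} $ orthogonally into a multiple of $ z $ (of norm $ |Rg(z)|/\|z\| $) and a vector orthogonal to $ z $, a short computation gives
\[ \|\widetilde\nabla g(z)\|^{2}=(1-\|z\|^{2})\bigl(\|\nabla g(z)\|^{2}-|Rg(z)|^{2}\bigr),\qquad z\in B_X, \]
the case $ z=0 $ being immediate. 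Taking suprema over $ \|u\|\le1 $ transfers the relevant inequalities back to $ f $.

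Granting this identity, the inclusion $ \mathcal B^{\rm inv}(B_X,Y)\subseteq\mathcal B^{\nabla}(B_X,Y) $ with $ \|f\|_{\mathcal B^{\nabla}(B_X,Y)}\le\|f\|_{\mathcal B^{\rm inv}(B_X,Y)} $ is immediate: by Cauchy--Schwarz $ |Rg(z)|\le\|z\|\,\|\nabla g(z)\| $, so the identity yields $ \|\widetilde\nabla g(z)\|^{2}\ge(1-\|z\|^{2})^{2}\|\nabla g(z)\|^{2} $, i.e. $ (1-\|z\|^{2})\|\nabla g(z)\|\le\|\widetilde\nabla g(z)\| $; together with the $ f(0) $-equality this is the first norm inequality.

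For the reverse inclusion put $ M:=\|f\|_{s\mathcal B^{\nabla}(B_X,Y)} $; by the identity it suffices to bound $ (1-\|z\|^{2})\bigl(\|\nabla g(z)\|^{2}-|Rg(z)|^{2}\bigr) $ by a fixed multiple of $ M^{2} $, uniformly in $ g=u\circ f $ and $ z $. For $ \|z\|\le\frac12 $ this is trivial ($ 1-\|z\|^{2}\asymp1 $ and $ \|\nabla g(z)\|\le M/(1-\|z\|^{2}) $). For $ \|z\|\ge\frac12 $, write $ \|\nabla g(z)\|^{2}-|Rg(z)|^{2}=\|\nabla_{\mathrm{tan}}g(z)\|^{2}+(1-\|z\|^{2})\,|Rg(z)|^{2}/\|z\|^{2} $, where $ \nabla_{\mathrm{tan}}g(z) $ is the component of $ \overline{\nabla g(z)} $ orthogonal to $ z $; the second summand contributes $ \le M^{2} $ since $ |Rg(z)|\le\|z\|\,\|\nabla g(z)\|\le\|z\|M/(1-\|z\|^{2}) $. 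Thus everything reduces to the tangential estimate $ \sqrt{1-\|z\|^{2}}\,\|\nabla_{\mathrm{tan}}g(z)\|\lesssim M $, equivalently $ |Dg(z)(e)|\lesssim M/\sqrt{1-\|z\|^{2}} $ for every unit $ e\perp z $. Here I would apply the finite-dimensional reduction of Proposition \ref{prop_OSm}: taking $ x=(z/\|z\|,e)\in OS_2 $ and $ g_x\in H(\BB_2) $, \eqref{eq_2.1} gives $ \|g_x\|_{s\mathcal B^{\nabla}(\BB_2)}\le M $ and $ Dg(z)(e)=\partial_2 g_x(\|z\|,0) $, so it is enough to prove the tangential estimate on $ \BB_2 $. \emph{This} $ \BB_2 $-estimate is where the real difficulty lies and which I expect to be the main obstacle: the pointwise bound $ \|\nabla g\|\lesssim M/(1-\|\cdot\|^{2}) $ alone is too lossy, and one must instead use that $ g $ is Lipschitz for the Bergman metric of $ \BB_2 $ with constant $ \lesssim M $ — the classical fact behind the Möbius invariance of the Bloch space — which is established by combining the one-variable Schwarz--Pick inequality along the complex geodesics of $ \BB_2 $ with the estimates for $ \varphi_a $ and for $ 1-\|\varphi_a(w)\|^{2} $ near $ \partial\BB_2 $. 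Once the $ \BB_2 $-estimate is available, reversing the chain of reductions finishes the proof.
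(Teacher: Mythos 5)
First, a point of comparison: the paper does not actually prove Theorem \ref{thm_main1} --- it is imported verbatim from \cite{Qu} (Theorem 4.9), which in turn rests on the scalar, unweighted case treated in \cite{BGM} --- so the comparison is with that source's argument rather than with anything in the present text. Your architecture matches it: reduce to scalar functions $g=u\circ f$, establish the exact identity $\|\widetilde\nabla g(z)\|^{2}=(1-\|z\|^{2})\bigl(\|\nabla g(z)\|^{2}-|Rg(z)|^{2}\bigr)$ from $\varphi_z(0)=z$ and $D\varphi_z(0)=-(1-\|z\|^{2})P_z-\sqrt{1-\|z\|^{2}}\,Q_z$, deduce $\|f\|_{\mathcal B^{\nabla}}\le\|f\|_{\mathcal B^{\rm inv}}$ by Cauchy--Schwarz, and reduce the converse to a tangential estimate on a two-dimensional slice via $OS_2$ and \eqref{eq_2.1}. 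All of these steps are correct as you state them, and the radial/tangential splitting of $\|\nabla g\|^{2}-|Rg|^{2}$ is handled properly.

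The gap is that the one step you leave open --- $\sqrt{1-\|z\|^{2}}\,\|\nabla_{\mathrm{tan}}g(z)\|\lesssim M$ on $\BB_2$ --- is the entire analytic content of the theorem, and the sketch you offer for it does not close. Asserting that $g$ is Lipschitz for the Bergman metric with constant $\lesssim M$ is not an intermediate step: Lipschitz with constant $C$ for the Bergman distance is \emph{equivalent} to $\sup_z\|\widetilde\nabla g(z)\|\lesssim C$, so this is a restatement of the conclusion, not a route to it. Moreover, the natural attempt to derive that Lipschitz bound from $\sup_w(1-\|w\|^{2})\|\nabla g(w)\|\le M$ by integrating along a Bergman geodesic fails: the isotropic metric $\|v\|/(1-\|w\|^{2})$ is \emph{not} dominated by the Bergman metric in tangential directions (it exceeds it by a factor of order $(1-\|w\|^{2})^{-1/2}$), which is precisely the loss you are trying to recover; similarly, a Cauchy estimate on the affine tangential disc $\{z+\lambda e:\ |\lambda|<\sqrt{1-\|z\|^{2}}\}$ only returns $|Dg(z)e|\lesssim M/(1-\|z\|^{2})$. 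The known proofs of the tangential gain on $\BB_n$ (see \cite{Zh}, Theorem 3.4 and the lemmas preceding it) go through an integral reproducing-kernel representation of $g$ in terms of $Rg$ together with kernel estimates, not through Schwarz--Pick along geodesics. So you should either carry out that representation argument on $\BB_2$ or cite the finite-dimensional equivalence explicitly; as written, the proposal assumes the hard half of the theorem.
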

Now let  $ W\subset Y' $ be a separating subspace of the dual $ Y', $ 
 then we obtain the equivalence of the norms in associated Bloch-type spaces:
\[ \|\cdot\|_{W\mathcal B_\mu^R(B_X)} \cong \|\cdot\|_{W\mathcal B_\mu^\nabla(B_X)} \cong \|\cdot\|_{W\mathcal B_\mu^{\rm aff}(B_X)},\]
\[ \|\cdot\|_{W\mathcal B^R(B_X)} \cong \|\cdot\|_{W\mathcal B^\nabla(B_X)} \cong \|\cdot\|_{W\mathcal B^{\rm aff}(B_X)} \cong \|\cdot\|_{W\mathcal B^{\rm inv}(B_X)}.\]


Hence, for the sake of simplicity, from now on we write $ \mathcal B_\mu $ instead of $ \mathcal B_\mu^R. $

\medspace
\medspace
Finally, the following  shows that   $ W\mathcal{B}_\mu(B_X)(Y), $ $ W\mathcal{B}_{\mu,0}(B_X)(Y) $ 
satisfy   (we1)-(we3).

%
%
\begin{prop}[\cite{Qu}, Proposition 4.11]\label{prop_e1e3}Let
	$W \subset Y'$ be a separating subspace. Let $ \mu $ be a   normal weight on $ B_X. $  Then $  \mathcal{B}_\mu(B_X), $ $  \mathcal{B}_{\mu,0}(B_X) $  satisfy (e1) and (e2), and hence,  $ W\mathcal{B}_\mu(B_X)(Y)$ and  $ W\mathcal{B}_{\mu,0}(B_X)(Y) $ satisfy  (we1)-(we3). \end{prop}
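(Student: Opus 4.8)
Since the final ("hence") assertion is precisely the conclusion of Proposition~\ref{Prop2.1}, the statement reduces to checking that the scalar spaces $\mathcal B_\mu(B_X)$ and $\mathcal B_{\mu,0}(B_X)$ satisfy (e1) and (e2). Property (e1) is immediate: a constant $f\equiv c$ has $Rf\equiv 0$, so $\mu(z)|Rf(z)|\equiv 0$ and hence $f\in\mathcal B_{\mu,0}(B_X)\subset\mathcal B_\mu(B_X)$. Thus everything comes down to (e2), i.e.\ the $\tau_{co}$-compactness of the two closed unit balls. I would obtain this from three facts about $B:=B_{\mathcal B_\mu(B_X)}$: (i) $B$ is uniformly bounded on every norm-compact subset of $B_X$; (ii) (i) upgrades, via a Montel-type argument, to relative $\tau_{co}$-compactness of $B$; (iii) $B$ is $\tau_{co}$-closed.

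For (i), fix $f$ with $\|f\|_{\mathcal B_\mu(B_X)}\le 1$ and $z\in B_X\setminus\{0\}$, put $\zeta=z/\|z\|$ and $g(\lambda)=f(\lambda\zeta)$ for $\lambda\in\C$, $|\lambda|<1$. Then $g$ is holomorphic and $Rf(\lambda\zeta)=\lambda g'(\lambda)$, so $f(z)-f(0)=\int_0^{\|z\|}\tfrac{Rf(s\zeta)}{s}\,ds$. Splitting the integral at $s=1/2$, bounding $|g'|$ on $\{|\lambda|\le 1/2\}$ by the maximum principle and $|Rf(s\zeta)|\le\|f\|_{s\mathcal B_\mu(B_X)}/\mu(s)$ on $[1/2,\|z\|]$, and invoking the normality estimate \eqref{R_mu}, I get $|f(z)|\le |f(0)|+C_\mu\,\|f\|_{s\mathcal B_\mu(B_X)}/\mu(z)$ with $C_\mu$ depending only on $\mu$ (equivalently, one may pass to the gradient seminorm and use Theorem~\ref{thm_main}). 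Since $\mu$ is positive, continuous and radial, $\inf_{[0,\rho]}\mu>0$ for every $\rho<1$; as every norm-compact $K\subset B_X$ satisfies $\sup_{z\in K}\|z\|<1$, this gives (i).

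For (ii): by Cauchy's estimates a locally bounded family of holomorphic functions on an open subset of a Banach space is locally equi-Lipschitz, hence equicontinuous on each norm-compact subset of $B_X$; combining this with the pointwise boundedness from (i), Ascoli's theorem together with the fact that local uniform limits of holomorphic functions are holomorphic shows that $B$ is relatively $\tau_{co}$-compact in $H(B_X)$. For (iii): if $f_\alpha\to f$ in $\tau_{co}$ with $\|f_\alpha\|_{\mathcal B_\mu(B_X)}\le 1$, then $f\in H(B_X)$, $f_\alpha(0)\to f(0)$, and (again by Cauchy's estimates) $Df_\alpha\to Df$ locally uniformly, so $Rf_\alpha(z)\to Rf(z)$ for each $z$; hence $\mu(z)|Rf(z)|=\lim_\alpha\mu(z)|Rf_\alpha(z)|\le\liminf_\alpha\bigl(1-|f_\alpha(0)|\bigr)=1-|f(0)|$, and taking the supremum over $z$ gives $\|f\|_{\mathcal B_\mu(B_X)}\le 1$. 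Thus $B$ is $\tau_{co}$-closed, and with (ii) it is $\tau_{co}$-compact; this proves (e2) for $\mathcal B_\mu(B_X)$.

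It remains to treat $\mathcal B_{\mu,0}(B_X)$. Its closed unit ball $B_0$ is contained in $B$, so by (ii) it is relatively $\tau_{co}$-compact; the only thing left is to show $B_0$ is $\tau_{co}$-closed, and I expect this to be the main obstacle, since the ``little-oh'' decay condition is not obviously stable under passage to $\tau_{co}$-limits. The plan is to prove that this decay is \emph{uniform} over $B_0$: using the finite-dimensional characterisation of $\mathcal B_{\mu,0}(B_X)$ in Proposition~\ref{prop_OSm_0}, one would try to show that for $f\in B_0$ the quantity $\sup_{x\in OS_m}\mu(z_{[m]})\|\nabla f_x(z_{[m]})\|$ tends to $0$ as $\|z_{[m]}\|\to 1$ at a rate independent of $f$; such uniform decay would then survive $\tau_{co}$-limits and give $B_0$ closed. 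Once (e2) is verified for both spaces, Proposition~\ref{Prop2.1} yields (we1)--(we3) for $W\mathcal B_\mu(B_X)(Y)$ and $W\mathcal B_{\mu,0}(B_X)(Y)$, completing the proof.
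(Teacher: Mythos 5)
Your handling of (e1) and of (e2) for the big space $\mathcal B_\mu(B_X)$ is correct and is essentially the paper's route: the growth bound you derive in step (i) is precisely the estimate \eqref{eq_5.1} that Remark \ref{rmk_1} identifies as the key ingredient of the cited proof, and the Montel-plus-lower-semicontinuity argument in (ii)--(iii) is the standard way to conclude (note that \eqref{eq_5.1} gives uniform boundedness of $B$ on each ball $\rho B_X$, $\rho<1$, which is what you need for the Cauchy/equicontinuity step).

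The gap is exactly where you flagged it, and the repair you propose cannot work. The decay $\mu(z)\|\nabla f(z)\|\to0$ is \emph{not} uniform over the closed unit ball $B_0$ of $\mathcal B_{\mu,0}(B_X)$: already in one variable, for the normalized monomials $f_n(z)=c_nz^n$ with $\|f_n\|_{\mathcal B}=1$ (so $c_n\asymp e/2$), the supremum of $(1-|z|^2)|f_n'(z)|$ over $|z|\ge\rho$ equals $1$ for all large $n$, since it is attained at $|z|^2=(n-1)/(n+1)\to1$. Worse, whenever $\mathcal B_{\mu,0}(B_X)\ne\mathcal B_\mu(B_X)$ the ball $B_0$ fails to be $\tau_{co}$-closed at all: for $f\in\mathcal B_\mu(B_X)\setminus\mathcal B_{\mu,0}(B_X)$ the dilates $f_r(z)=f(rz)$ lie in $\mathcal B_{\mu,0}(B_X)$, satisfy $\|f_r\|_{s\mathcal B_\mu}\le (M_\mu/m_\mu)\|f\|_{s\mathcal B_\mu}$ (by \eqref{w_1} one checks $\mu(t)\le (M_\mu/m_\mu)\,\mu(rt)$), and converge to $f$ uniformly on compact subsets as $r\to1^-$; for $\mu(t)=1-t^2$ and $f(z)=\tfrac12\log\tfrac1{1-\langle z,e_1\rangle}$ one even gets $\|f_r\|_{\mathcal B}\le\|f\|_{\mathcal B}=1$. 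So the $\tau_{co}$-limit of a sequence in $B_0$ can fall outside $\mathcal B_{\mu,0}(B_X)$, and no strengthening of your uniform-decay idea can rescue $\tau_{co}$-compactness of $B_0$. You should therefore not expect to prove (e2) for $\mathcal B_{\mu,0}(B_X)$ as literally stated; consult the proof of \cite[Proposition 4.11]{Qu} to see what is actually established for the little space before building on this half of the proposition.
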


\begin{rmk}\label{rmk_1}
	(1) In the proof of this proposition we used the following estimate
	\begin{equation}\label{eq_5.1}
		|f(z)| \le \max\bigg\{1,\int_0^{\|z\|}\frac{dt}{\mu(t)}\bigg\}\|f\|_{\mathcal{B}_\mu(B_X)}\quad \forall f \in \mathcal{B}_\mu(B_X), \forall z \in B_X.
	\end{equation} 
	In fact, the estimate  (\ref{eq_5.1}) can be written as follows
	\[ |f(z)| \le |f(0)| + \int_0^{\|z\|}\frac{dt}{\mu(t)}\|f\|_{s\mathcal B_\mu}.\]
	
	(2) Since $ \mu $ is non-increasing, it is easy to see that  $ M_\mu := \sup_{t \in [0,1)}\mu(t) <\infty. $ Then, from (\ref{eq_5.1}) we obtain that
	\begin{equation}\label{eq_5.1A}
		\mu(z)|f(z)| \le \max\{1, M_\mu\} \|f\|_{\mathcal B_{\mu}(B_X)}\quad \forall z \in B_X.
	\end{equation}
%
%
\end{rmk}
%
%
%

\section{The Main Results and Test Functions}
\setcounter{equation}{0}
This section  introduces the mains results of paper and  test functions that are  usefull for the proofs.

In this section we consider  $ \nu $ is a non-increasing, normal weight on $ B_X $ and $\varphi   \in S(B_X).$

We begin this section by  constructing  test functions that are  usefull for the proofs of our main results.

\subsection{The main results}
Let $ \nu, \mu $ be  normal weights on $ B_X. $
Let $\varphi \in S(B_X),$  the set of  holomorphic self-maps on $ B_X$ and $ \psi \in H(B_X). $ For each   $ F \subset \Gamma $  finite and $ m \in \N $ we write
\[ \varphi^{[F]} = \varphi\big|_{\text{span}\{e_k, k \in F\}}, \quad \varphi^{[m]} = \varphi\big|_{\text{span}\{e_1, \ldots, e_m\}}\]
and
\[ \psi^{[F]} = \psi\big|_{\text{span}\{e_k, k \in F\}}, \quad \psi^{[m]} = \psi\big|_{\text{span}\{e_1, \ldots, e_m\}}.\]
For each $ j \in \Gamma $ and $ k\ge 1 $ we denote
\[ \varphi_j(\cdot) := \langle\varphi(\cdot), e_j\rangle, \quad \varphi_{(k)}(\cdot) := (\varphi_1(\cdot), \ldots, \varphi_k(\cdot)).  \]
In this section we investigate the boundedness and the compactness of  the   operators $ W_{\psi,\varphi} $ between the (little) Bloch-type spaces $ \mathcal B_\nu $ ($ \mathcal B_{\nu,0} $)  and $ \mathcal B_\mu,$    ($ \mathcal B_{\mu,0}$) via the estimates of $ \psi^{[m]}, \varphi^{[m]} $ and $ \varphi_{(k)}. $ 
Hence, by Theorem \ref{thm_WO}, some characterizations for the boundedness and compactness of the operators $ \widetilde{C}_{\psi, \varphi} $ between  spaces $ W\mathcal B_\nu(B_X,Y), $ ($ W\mathcal B_{\nu,0}(B_X,Y) $) and  $ W\mathcal B_{\mu}(B_X,Y), $ ($ W\mathcal B_{\nu,0}(B_X,Y) $)  will be obtained from these results.

By  Theorem \ref{thm_main} in the paper we will only present the results for the spaces $ \mathcal B_\mu^R(B_X). $

First we investigate the boundedness of  extended Ces\`aro composition operators.  We use there certain quantities, which will be used in this work. We list them below:

\begin{subequations}
	\begin{align}
		&\mu^{[m]}(y)\|R\psi^{[m]}(y)\|\max\bigg\{1, \int_0^{\|\varphi^{[m]}(y)\|}\dfrac{dt}{\nu(t)}\bigg\}, \label{B_1}\\
		&\mu^{[F]}(y)\|R\psi^{[F]}(y)\| \max\bigg\{1, \int_0^{\|\varphi^{[F]}(y)\|}\dfrac{dt}{\nu(t)}\bigg\}, \label{B_2} \\
		&\mu(y)\|R\psi(y)\|\max\bigg\{1, \int_0^{\|\varphi_{(k)}(y)\|}\dfrac{dt}{\nu(t)}\bigg\}, \label{B_3} \\
		&\mu(y)\|R\psi(y)\| \max\bigg\{1, \int_0^{\|\varphi(y)\|}\dfrac{dt}{\nu(t)}\bigg\}. \label{B_4}
	\end{align}
\end{subequations}
We also use the notations $ C_{\psi, \varphi}: \mathcal B_\nu  \to \mathcal B_\mu,$  $C^0_{\psi, \varphi}: \mathcal B_{\nu,0} \to \mathcal B_{\mu},$ $C^{0,0}_{\psi, \varphi}: \mathcal B_{\nu,0} \to \mathcal B_{\mu,0}$ to denote the  extended Ces\`aro composition  operators.

Now we are ready to state the first main result. The first ones  propose to the boundeness of $ C_{\psi,\varphi}, $ $ C^0_{\psi,\varphi}, $ $ C^{0,0}_{\psi,\varphi} $ and  the equivalent relationships between them.

\begin{thm}
	\label{thm6_1} Let $\psi \in H(B_X),$  $\varphi   \in S(B_X)$  and  $ \mu, \nu $ be   normal weights on $ B_X. $  Then   the  following are equivalent:  
	\begin{enumerate}[\rm(1)]
		\item $ M^{[m]}_{R\psi,\varphi} := \displaystyle\sup_{y \in \BB_m}(\ref{B_1})<\infty $ for some  $ m \ge 2;$ 
		
		\item $  M^{[F]}_{R\psi,\varphi} := \displaystyle\sup_{y \in \BB_{[F]}}(\ref{B_2})<\infty $  for every $ F \subset \Gamma$ finite;
		
		\item $ M^{(k)}_{R\psi,\varphi} := \displaystyle\sup_{y \in B_X}(\ref{B_3})<\infty $ for every $ k \ge 1;$    
	
		\item $ M_{R\psi,\varphi}:= \displaystyle\sup_{y \in B_X}(\ref{B_4}) < \infty;$ 
				\item $C_{\psi, \varphi}:   \mathcal B_\nu(B_X) \to \mathcal B_\mu(B_X)$     is bounded; 
		\item $C^0_{\psi, \varphi}: \mathcal B_{\nu,0}(B_X) \to \mathcal B_\mu(B_X)$     is bounded. 
	\end{enumerate}
	Moreover, if  $C_{\psi, \varphi}$     is bounded,  the following asymptotic relation holds for some $ m\ge 2: $
	\begin{equation}\label{eq_Thm6.1}
		\|C_{\psi,\varphi}\|    \asymp  M^{[m]}_{R\psi,\varphi}.
	\end{equation}
\end{thm}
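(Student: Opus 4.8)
The plan is to prove the cycle of equivalences $(1)\Leftrightarrow(2)\Leftrightarrow(3)\Leftrightarrow(4)$ among the ``symbol-side'' finiteness conditions first, then close the loop with the operator-side statements $(5)$ and $(6)$, and finally extract the norm estimate \eqref{eq_Thm6.1}. The heart of the matter is the following computation: for $g\in\mathcal B_\mu^R(B_X,Y)$ and any $f\in\mathcal B_\nu(B_X)$, the radial derivative of $C_{\psi,\varphi}f$ can be computed from \eqref{C_operator} by differentiating under the integral sign, yielding $R\big(C_{\psi,\varphi}f\big)(z)=f(\varphi(z))R\psi(z)$; this identity (which I would verify carefully as a preliminary lemma, since it is the engine of the whole theorem) immediately gives
\[
\mu(z)\big|R(C_{\psi,\varphi}f)(z)\big| = \mu(z)\,|R\psi(z)|\,|f(\varphi(z))|.
\]
Using Remark \ref{rmk_1}(1), i.e.\ the pointwise estimate $|f(w)|\le\max\{1,\int_0^{\|w\|}dt/\nu(t)\}\|f\|_{\mathcal B_\nu(B_X)}$, bounding the right side gives exactly the quantity \eqref{B_4} times $\|f\|_{\mathcal B_\nu}$; this shows $(4)\Rightarrow(5)$ together with $\|C_{\psi,\varphi}\|\lesssim M_{R\psi,\varphi}$ (after also controlling the value at $0$, which is harmless since $C_{\psi,\varphi}f(0)=0$).

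For the implications going from operator boundedness back to the symbol conditions, and among the symbol conditions themselves, I would use the reduction-to-finite-dimensions machinery of Propositions \ref{prop_OSm} and \ref{prop_OSm_0} and Theorem \ref{thm_main}: the norm $\|g\|_{s\mathcal B_\mu^R(B_X)}$ equals the supremum over $x\in OS_m$ of $\|g_x\|_{s\mathcal B_\mu^R(\BB_m)}$ for any fixed $m\ge2$, so testing $C_{\psi,\varphi}$ (or rather its restrictions to the subspaces $\mathrm{span}\{e_k:k\in F\}$) against the standard test functions of Bloch-type spaces on $\BB_m$ produces lower bounds matching \eqref{B_1}, \eqref{B_2}, \eqref{B_3}. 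Concretely, $(5)\Rightarrow(4)$: for fixed $w_0=\varphi(z)\in B_X$ one needs a test function $f_{w_0}\in\mathcal B_\nu(B_X)$ with $\|f_{w_0}\|_{\mathcal B_\nu}\lesssim1$ and $|f_{w_0}(w_0)|\gtrsim\max\{1,\int_0^{\|w_0\|}dt/\nu(t)\}$; such functions are among the ``test functions'' the paper announces it will construct in Section 3, so I would invoke them. Then $\mu(z)|R\psi(z)|\max\{1,\int_0^{\|\varphi(z)\|}dt/\nu(t)\}\lesssim\mu(z)|R(C_{\psi,\varphi}f_{w_0})(z)|\le\|C_{\psi,\varphi}\|$, giving $(5)\Rightarrow(4)$ with $M_{R\psi,\varphi}\lesssim\|C_{\psi,\varphi}\|$.

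The equivalences $(1)\Leftrightarrow(2)\Leftrightarrow(3)\Leftrightarrow(4)$ are monotonicity/exhaustion arguments. Since $\varphi^{[F]}$, $\varphi^{[m]}$, $\varphi_{(k)}$ are restrictions/coordinate-truncations of $\varphi$, and the integrand $\int_0^sdt/\nu(t)$ is increasing in $s$ while $\|\varphi_{(k)}(y)\|\le\|\varphi(y)\|$ and $\|\varphi^{[F]}(y)\|\le\|\varphi(y)\|$, the directions $(4)\Rightarrow(3)\Rightarrow(1)$ and $(4)\Rightarrow(2)\Rightarrow(1)$ are immediate (each sup on the left is over a subset of data dominated by \eqref{B_4}); the corresponding semi-norm identities $\mu^{[F]}(y)\|R\psi^{[F]}(y)\|=\mu(y_F)\|R\psi(y_F)\|$ coming from \eqref{eq_2.1} and its $R$-analogue keep the constants equal to $1$. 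The reverse direction $(1)\Rightarrow(4)$ for a single $m\ge2$ is the subtle one: one argues that if the sup over the $m$-dimensional ball (equivalently, over all $OS_m$-copies, via Proposition \ref{prop_OSm}) is finite, then by the same linearization/finite-dimensional-reduction used to prove Proposition \ref{prop_OSm} this forces finiteness over all finite $F$ and hence, passing to the supremum, over all of $B_X$; here $k\to\infty$ in $\varphi_{(k)}$ and $|F|\to\infty$ recover $\varphi$ by continuity of $\psi,\varphi$ and monotone convergence in the integral. I expect this $(1)\Rightarrow$ (everything) step — propagating a bound known only on one finite-dimensional slice to the whole infinite-dimensional ball — to be the main obstacle, and it is exactly where Proposition \ref{prop_OSm}'s equality of semi-norms \eqref{eq_Prop4.1} does the decisive work.

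Finally, $(5)\Leftrightarrow(6)$: since $\mathcal B_{\nu,0}(B_X)\subset\mathcal B_\nu(B_X)$ continuously, $(5)\Rightarrow(6)$ is trivial; for $(6)\Rightarrow(4)$ one notes the test functions $f_{w_0}$ above can be chosen in the little space $\mathcal B_{\nu,0}(B_X)$ (polynomials, or truncations, lie in $\mathcal B_{\nu,0}$), so the same lower-bound argument runs verbatim. Combining the constants harvested along the way — $\|C_{\psi,\varphi}\|\lesssim M_{R\psi,\varphi}=M^{[m]}_{R\psi,\varphi}$ (the last equality from $(1)\Leftrightarrow(4)$ with matching constants) and $M^{[m]}_{R\psi,\varphi}\lesssim\|C_{\psi,\varphi}\|$ — yields the asymptotic relation \eqref{eq_Thm6.1}.
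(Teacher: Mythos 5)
Most of your ingredients coincide with the paper's: the identity $R(C_{\psi,\varphi}f)(z)=f(\varphi(z))R\psi(z)$ is exactly Lemma \ref{nabla}; the upper bound for the operator norm via the growth estimate (\ref{eq_5.1}) is the paper's computation; and the lower bound via test functions whose value at a prescribed $w=\varphi(z)$ is $\gtrsim\int_0^{\|w\|}dt/\nu(t)$, chosen in the little space, is precisely the paper's use of $\beta_w$ from (\ref{test_func2}) together with the constant function $1$ (which handles both the ``$\max$ with $1$'' part and the fact $\psi\in\mathcal B_\mu$), giving (6) $\Rightarrow$ (4).

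The genuine gap is your plan to establish $(1)\Rightarrow(4)$ as a purely symbol-side implication, \emph{before} bringing in the operator, by invoking Proposition \ref{prop_OSm}. The seminorm identity \eqref{eq_Prop4.1} applies to the Bloch seminorm of a \emph{single} holomorphic function; the quantity (\ref{B_1}) is not of that form --- it is a product of a derivative-type quantity of $\psi$ with a growth functional of $\varphi$ --- so \eqref{eq_Prop4.1} gives no mechanism for propagating a bound on the one slice $\mathrm{span}\{e_1,\dots,e_m\}$ to the other slices $\BB_{[F]}$, let alone to all of $B_X$; nor is there any monotonicity relating the slice $\{e_1,\dots,e_m\}$ to an arbitrary finite $F$, since the restrictions $\psi^{[F]},\varphi^{[F]}$ for different $F$ live on genuinely different subsets of $B_X$. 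The paper closes the loop differently: it proves $(1)\Rightarrow(5)$ by applying \eqref{eq_Prop4.1} to the single function $C_{\psi,\varphi}f$, namely $\|C_{\psi,\varphi}f\|_{\mathcal B_\mu(B_X)}=\sup_{x\in OS_m}\|(C_{\psi,\varphi}f)_x\|_{\mathcal B_{\mu^{[m]}}(\BB_m)}$, estimating each slice by $M^{[m]}_{R\psi,\varphi}\|f\|_{\mathcal B_\nu(B_X)}$ exactly as in your $(4)\Rightarrow(5)$ computation, and then recovers $(4)$ through $(5)\Rightarrow(6)\Rightarrow(4)$ via the test functions. In other words the passage from one finite-dimensional slice to the whole ball must happen at the level of the operator, not of the symbol. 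With that reordering your argument becomes the paper's, and the relation $\|C_{\psi,\varphi}\|\asymp M^{[m]}_{R\psi,\varphi}$ follows from $M^{[m]}_{R\psi,\varphi}\le M_{R\psi,\varphi}\lesssim\|C^0_{\psi,\varphi}\|\le\|C_{\psi,\varphi}\|\le M^{[m]}_{R\psi,\varphi}$ --- note that only this chain of inequalities is obtained, not the literal equality $M^{[m]}_{R\psi,\varphi}=M_{R\psi,\varphi}$ you assert at the end.
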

Next, we will touch the characterizations for  the boundedness of the operators from $ \mathcal B_{\nu,0}(B_X)$  to  $\mathcal B_{\mu,0}(B_X).$

\begin{thm}\label{thm2_2} Let $\psi \in H(B_X),$  $\varphi   \in S(B_X)$ and  $ \mu, \nu $ be   normal weights on $ B_X. $  Then   the  following are equivalent:
	\begin{enumerate}[\rm(1)]
		\item   $\psi^{[m]} \in \mathcal B_{\mu^{[m]},0}(\BB_m)$  and  $ M^{[m]}_{R\psi,\varphi} := \displaystyle\sup_{y \in \BB_m}(\ref{B_1})<\infty $ for some  $ m \ge 2;$ 
		\item   $\psi^{[F]} \in \mathcal B_{\mu^{[F]},0}(\BB_{[F]})$  and   $  M^{[F]}_{R\psi,\varphi} := \displaystyle\sup_{y \in \BB_{[F]}}(\ref{B_2})<\infty $  for every $ F \subset \Gamma$ finite;
		\item  $\psi \in \mathcal B_{\mu,0}(B_X)$  and $ M^{(k)}_{R\psi,\varphi} := \displaystyle\sup_{y \in B_X}(\ref{B_3})<\infty $ for every $ k \ge 1;$    
		\item  $\psi  \in \mathcal B_{\mu,0}(B_X)$ and $ M_{R\psi,\varphi}:= \displaystyle\sup_{y \in B_X}(\ref{B_4}) < \infty;$ 
		\item  $C^{0,0}_{\psi, \varphi}: \mathcal B_{\nu,0}(B_X) \to \mathcal B_{\mu,0}(B_X)$     is bounded. 	
	\end{enumerate}
	In this case, the following  asymptotic relation    holds  for some $ m\ge 2: $
	\begin{equation} \label{eq_Thm6.2}
		\|C^{0,0}_{\psi,\varphi}\|    \asymp  M^{[m]}_{R\psi,\varphi}.
	\end{equation}
\end{thm}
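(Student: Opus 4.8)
The plan is to prove the cycle of implications $(1)\Rightarrow(2)\Rightarrow(3)\Rightarrow(4)\Rightarrow(5)\Rightarrow(1)$, recycling as much as possible from Theorem \ref{thm6_1}. Note first that the ``$M$-quantities'' appearing in (1)--(4) here are literally the same as those in Theorem \ref{thm6_1}, so by that theorem all four of them are already equivalent \emph{among themselves} and equivalent to the boundedness of $C_{\psi,\varphi}\colon\mathcal B_\nu\to\mathcal B_\mu$. Hence the whole content of the present theorem is the interplay between those quantities and the little-oh conditions $\psi^{[m]}\in\mathcal B_{\mu^{[m]},0}(\BB_m)$, $\psi\in\mathcal B_{\mu,0}(B_X)$, together with the fact that $C^{0,0}_{\psi,\varphi}$ maps into $\mathcal B_{\mu,0}$. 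So the real work splits into two parts: (i) show the little-oh hypotheses on $\psi$ are themselves mutually equivalent across the dimensions $m$, $|F|<\infty$, and $B_X$ (this is exactly Proposition \ref{prop_OSm_0} applied to $\psi$, together with the reduction in Proposition \ref{prop_OSm}); and (ii) show that, \emph{given} finiteness of the $M$-quantity, the condition $\psi\in\mathcal B_{\mu,0}(B_X)$ is equivalent to $C^{0,0}_{\psi,\varphi}$ being a bounded operator into the little space.

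For the forward direction $(1)\Rightarrow\cdots\Rightarrow(5)$: assuming (4), i.e. $\psi\in\mathcal B_{\mu,0}(B_X)$ and $M_{R\psi,\varphi}<\infty$, first invoke Theorem \ref{thm6_1} to get that $C_{\psi,\varphi}\colon\mathcal B_\nu\to\mathcal B_\mu$ is bounded; a fortiori $C^{0,0}_{\psi,\varphi}$ is bounded \emph{as a map into $\mathcal B_\mu$}. It then remains to check that if $f\in\mathcal B_{\nu,0}(B_X)$ then $C_{\psi,\varphi}f\in\mathcal B_{\mu,0}(B_X)$. Here one estimates $\mu(z)\,\|R(C_{\psi,\varphi}f)(z)\|$; by the product/integral structure \eqref{C_operator} one gets $R(C_{\psi,\varphi}f)(z)=f(\varphi(z))\,R\psi(z)$ (the usual identity for the extended Ces\`aro operator), so that $\mu(z)\|R(C_{\psi,\varphi}f)(z)\| = |f(\varphi(z))|\,\mu(z)\|R\psi(z)\|$. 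One splits into the region where $\|\varphi(z)\|$ is bounded away from $1$ and the region where it is near $1$: on the former, $|f(\varphi(z))|$ is bounded (by \eqref{eq_5.1}) while $\mu(z)\|R\psi(z)\|\to0$ as $\|z\|\to1$ because $\psi\in\mathcal B_{\mu,0}$; on the latter, use the growth bound $|f(w)|\lesssim\int_0^{\|w\|}\!dt/\nu(t)$ for $f\in\mathcal B_{\nu,0}$ which can be upgraded to the \emph{little-oh} statement $\nu$-weighted, i.e. for $f\in\mathcal B_{\nu,0}$ one has $|f(w)|=o\!\big(\int_0^{\|w\|}dt/\nu(t)\big)$ as $\|w\|\to1$, combined with finiteness of $M_{R\psi,\varphi}$. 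This needs the standard (but slightly delicate) fact that members of the \emph{little} Bloch-type space have strictly slower growth than the generic member of the big space; I would prove it by a $\sup$-over-tail argument as in Proposition \ref{prop_OSm_0}. Then reduce to finite dimensions via Proposition \ref{prop_OSm} to land (1).

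For the reverse direction $(5)\Rightarrow(1)$: suppose $C^{0,0}_{\psi,\varphi}\colon\mathcal B_{\nu,0}\to\mathcal B_{\mu,0}$ is bounded. Boundedness into $\mathcal B_\mu$ gives, via Theorem \ref{thm6_1} (the equivalence $(6)\Leftrightarrow(1)$ there, noting $\mathcal B_{\nu,0}\subset\mathcal B_\nu$ and matching norms), the finiteness of $M^{[m]}_{R\psi,\varphi}$ for some $m\ge2$. To extract $\psi^{[m]}\in\mathcal B_{\mu^{[m]},0}(\BB_m)$, apply $C^{0,0}_{\psi,\varphi}$ to the constant function $\mathbf 1\in\mathcal B_{\nu,0}(B_X)$: then $C_{\psi,\varphi}\mathbf 1 = T_\psi\mathbf 1$ which, by the integral formula, equals $\psi-\psi(0)$ (up to the normalization in \eqref{C_operator}); hence $\psi-\psi(0)\in\mathcal B_{\mu,0}(B_X)$, i.e. $\psi\in\mathcal B_{\mu,0}(B_X)$, and then restricting to $\BB_m$ and using Proposition \ref{prop_OSm_0} yields $\psi^{[m]}\in\mathcal B_{\mu^{[m]},0}(\BB_m)$. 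This closes the cycle. Finally, the asymptotic relation \eqref{eq_Thm6.2} follows from \eqref{eq_Thm6.1}: since the test functions realizing the lower bound $\|C_{\psi,\varphi}\|\gtrsim M^{[m]}_{R\psi,\varphi}$ can be taken in $\mathcal B_{\nu,0}$ (the test functions constructed in this section have this property), and the upper bound $\|C^{0,0}_{\psi,\varphi}\|\le\|C_{\psi,\varphi}\|\lesssim M^{[m]}_{R\psi,\varphi}$ is automatic from the big-space estimate, one gets $\|C^{0,0}_{\psi,\varphi}\|\asymp M^{[m]}_{R\psi,\varphi}$.

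The main obstacle I anticipate is part (ii) above: showing that the little-oh condition on $\psi$ plus the finite $M$-quantity is genuinely \emph{sufficient} for $C_{\psi,\varphi}f$ to land in $\mathcal B_{\mu,0}$ for every $f\in\mathcal B_{\nu,0}$. The delicate point is the two-region estimate near the boundary --- one must control $|f(\varphi(z))|$ uniformly in $z$, and the ``bad'' region is where $\|\varphi(z)\|\to1$; there one cannot use boundedness of $f$ and must instead exploit that $f\in\mathcal B_{\nu,0}$ forces $|f(w)|\big/\!\int_0^{\|w\|}\!dt/\nu(t)\to0$, which requires its own careful argument (an $\varepsilon$--$\delta$ splitting of the defining integral, estimating the tail where the weight is small and the head where $f$ is bounded). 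Everything else is a fairly mechanical transfer through Propositions \ref{prop_OSm}, \ref{prop_OSm_0} and Theorem \ref{thm6_1}.
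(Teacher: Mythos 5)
Your proposal follows essentially the same route as the paper's proof: the same reduction of the equivalences among (1)--(4) to Theorem \ref{thm6_1} together with the little-oh condition on $\psi$ (via Propositions \ref{prop_OSm}, \ref{prop_OSm_0}), the same identity $R(C_{\psi,\varphi}f)(z)=f(\varphi(z))R\psi(z)$ with a two-region split at $\|\varphi(z)\|=r_0$, the same use of the constant function (via $\psi-\psi(0)=C_{\psi,\varphi}1$) to get $\psi\in\mathcal B_{\mu,0}(B_X)$ from (5), and the same observation that the test functions realizing the lower bound lie in $\mathcal B_{\nu,0}(B_X)$, which yields \eqref{eq_Thm6.2}. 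One point needs correcting: the claimed upgrade $|f(w)|=o\big(\int_0^{\|w\|}dt/\nu(t)\big)$ for $f\in\mathcal B_{\nu,0}(B_X)$ is false when $\int_0^1 dt/\nu(t)<\infty$ (constant functions are a counterexample); what your $\varepsilon$--$\delta$ splitting actually produces is $|f(w)|\le K(r_0)+\varepsilon\int_{r_0}^{\|w\|}dt/\nu(t)$, and the additive constant $K(r_0)$ must then be absorbed using $\mu(z)\|R\psi(z)\|\to0$, i.e. the hypothesis $\psi\in\mathcal B_{\mu,0}$ --- which is exactly how the paper proceeds, estimating $|f(w)|\le |f(w)-f(r_0w/\|w\|)|+|f(r_0w/\|w\|)|$ and pairing the bounded second term with the little-oh decay of $\mu\|R\psi\|$. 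With that adjustment your argument coincides with the paper's.
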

Now the following are characterizations of  the compactness of extended Ces\`aro composition operators  $ C_{\psi,\varphi}. $
\begin{thm}\label{thm_compact} Let $\psi \in H(B_X),$  $\varphi \in S(B_X)$ and  $ \mu, \nu $ be   normal weights on $ B_X$ such that $ \int_0^1\frac{dt}{\nu(t)} = \infty. $ 
	\begin{enumerate}[\rm (A)]
		\item The  following are equivalent:
		\begin{enumerate}[\rm (1)]
			\item $ (\ref{B_3})\to0 $ as $ \|\varphi_{(k)}(y)\| \to 1 $	 or every $ k\ge1; $
			\item $C_{\psi, \varphi}: \mathcal B_{\nu}(B_X) \to \mathcal B_{\mu}(B_X)$     is compact; 
			\item $C^0_{\psi, \varphi}: \mathcal B_{\nu, 0}(B_X) \to \mathcal B_{\mu}(B_X)$     is compact.
		\end{enumerate}
		\item Under the additional assumption   that  there exists $ m\ge 2 $ such that     
		\begin{equation}\label{phi_relative_compact} 
			\begin{aligned}
				B[\varphi^{[m]}, r] &:=\big\{\varphi^{[m]}(y): \|\varphi^{[m]}(y)\| < r,  y \in \BB_m \big\}\\
				&\quad\ \ \text{is relatively compact for every $ 0\le r<1,$ }
			\end{aligned} 
		\end{equation}
		the assertions (2), (3) and  following are equivalent:
		\begin{enumerate}
			\item[\rm(4)]  $ (\ref{B_1})\to0 $ as $ \|\varphi^{[m]}(y)\| \to 1; $ 
			\item[\rm(5)]  $ (\ref{B_2})\to0 $ as $ \|\varphi^{[F]}(y)\| \to 1 $  for every $ F \subset \Gamma$ finite;
			\item[\rm(6)] $ (\ref{B_4})\to0 $ as $ \|\varphi(y)\| \to 1. $	 
		\end{enumerate} 
	\end{enumerate} 
\end{thm}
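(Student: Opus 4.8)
The plan is to reduce the infinite-dimensional compactness statements to finite-dimensional ones via Propositions \ref{prop_OSm} and \ref{prop_OSm_0}, and then to run the standard "test-function + normal-families" machinery for extended Cesàro composition operators on the balls $\BB_m$ and $\BB_{[F]}$. I would organize the argument around the equivalences $(1)\Leftrightarrow(4)\Leftrightarrow(5)\Leftrightarrow(6)$ for the symbol estimates, and then close the loop through the operator statements $(2),(3)$.

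For part (A), first I would observe that $(2)\Rightarrow(3)$ is trivial (restriction to a subspace, since $\mathcal B_{\nu,0}(B_X)\subset\mathcal B_\nu(B_X)$ contractively). For $(3)\Rightarrow(1)$: assume, for some $k\ge 1$, that $(\ref{B_3})$ does not tend to $0$; then there is a sequence $y^{(n)}\in B_X$ with $\|\varphi_{(k)}(y^{(n)})\|\to 1$ along which $(\ref{B_3})$ stays bounded below. Using the hypothesis $\int_0^1\frac{dt}{\nu(t)}=\infty$, the $\max\{1,\cdot\}$ in $(\ref{B_3})$ is eventually the integral term, so one can build test functions $f_n\in\mathcal B_{\nu,0}(B_X)$ depending only on the first $k$ coordinates, normalized in $\mathcal B_\nu$, with $f_n\to 0$ in $\tau_{co}$ but $\|C^0_{\psi,\varphi}f_n\|_{\mathcal B_\mu}\not\to 0$, contradicting compactness (compact operators between these spaces send $\tau_{co}$-null bounded sequences to norm-null sequences, by the compactness-of-$B_E$ hypothesis (e2) — this is the standard equivalence one always uses here). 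The construction of such $f_n$ is exactly the "several helpful test functions" promised in Section 3, and should parallel the one-variable/unit-ball Cesàro estimates; I expect these to be taken essentially verbatim from the test-function subsection. Finally $(1)\Rightarrow(2)$: given $(1)$ for all $k$, take a bounded sequence $f_n$ in $\mathcal B_\nu(B_X)$ that is $\tau_{co}$-null; split the norm $\|C_{\psi,\varphi}f_n\|_{s\mathcal B_\mu}$, via Proposition \ref{prop_OSm} and Theorem \ref{thm_main}, into a piece where $\|\varphi_{(k)}(y)\|$ is large (controlled by $(1)$ uniformly in $n$) and a piece where $\|\varphi_{(k)}(y)\|\le r<1$ (controlled because $f_n\to 0$ uniformly on the relatively small set of such images together with derivative bounds from Remark \ref{rmk_1}); letting first $r\to 1$ and then $n\to\infty$ gives $\|C_{\psi,\varphi}f_n\|_{\mathcal B_\mu}\to 0$.

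For part (B), the extra hypothesis (\ref{phi_relative_compact}) is precisely what lets the "piece where $\|\varphi^{[m]}(y)\|\le r$" be handled on a genuinely compact set, so the finite-$m$ version of the previous paragraph goes through and yields $(2)\Leftrightarrow(4)$. The equivalences $(4)\Leftrightarrow(5)\Leftrightarrow(6)$ among the symbol conditions I would get by the same finite-dimensional reduction as in Proposition \ref{prop_OSm}: condition $(4)$ for one $m\ge 2$ propagates to all finite $F$ and to the full symbol by taking suprema over orthonormal systems $OS_m$, using (\ref{eq_2.1}) and the identity $\|f\|_{s\mathcal B_\mu^\nabla(B_X)}=\sup_{x\in OS_m}\|f_x\|_{s\mathcal B_\mu^\nabla(\BB_m)}$, exactly as in (\ref{eq_Prop4.1}); the integral factors $\int_0^{\|\varphi^{[\cdot]}(y)\|}\frac{dt}{\nu(t)}$ transform compatibly because $\|\varphi^{[F]}(y)\|$ and $\|\varphi^{[m]}(y)\|$ are the norms of the same vector read in different finite frames.

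The main obstacle I anticipate is bookkeeping the two-sided estimates cleanly while the $\max\{1,\cdot\}$ truncation is active: near the boundary the hypothesis $\int_0^1\frac{dt}{\nu(t)}=\infty$ makes the integral dominate, but one must verify the test functions stay in the \emph{little} space $\mathcal B_{\nu,0}$ (needed for $(3)\Rightarrow(1)$) and simultaneously have controlled $\mathcal B_\nu$-norm and prescribed values/derivatives at the points $\varphi_{(k)}(y^{(n)})$ — the analogue of the classical point-evaluation test functions $f_a(z)\asymp\int_0^{\langle z,a\rangle}\frac{dt}{\nu(t)}$ composed with Möbius maps, but now for the \emph{radial-derivative} (extended Cesàro) weighting rather than ordinary composition. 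Once those test functions and their norm estimates are in hand (Section 3), the rest is the routine compact-operator/normal-families argument together with the finite-dimensional reduction already established in \cite{Qu}.
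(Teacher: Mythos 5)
Your overall architecture for part (A) and for the chain $(4)\Rightarrow(2)\Rightarrow(3)$ matches the paper: the Tjani-type criterion (compactness $\Leftrightarrow$ bounded $\tau_{co}$-null sequences go to norm-null sequences, Lemma \ref{lem_Tj}/Proposition \ref{prop_C_compact}), the split of $\mu\|R\psi\|\,|f_n\circ\varphi|$ into the region where $\|\varphi^{[m]}\|\le r_0$ (handled by uniform convergence on the relatively compact image set, which is where (\ref{phi_relative_compact}) enters, and is automatic for $\varphi_{(k)}$ in part (A)) and the region where $\|\varphi^{[m]}\|>r_0$ (handled by the symbol condition via the estimate $|f(w)-f(\hat w)|\lesssim\int_{r_0}^{\|w\|}dt/\nu(t)$), and the contradiction argument with test functions of the form $\gamma_w(z)=\big(\int_0^{\langle z,w\rangle}g\big)^2/\int_0^{\|w\|^2}g$, which the paper supplies in Section 3 together with exactly the properties you list as needed (membership in $\mathcal B_{\nu,0}$, uniform $\mathcal B_\nu$-bound, $\tau_{co}$-convergence to $0$ under $\int_0^1 dt/\nu=\infty$, and the lower bound $\gamma_w(w)\gtrsim\int_0^{\|w\|}dt/\nu(t)$). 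Deferring that construction is acceptable in a proposal.

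The genuine gap is your treatment of $(4)\Leftrightarrow(5)\Leftrightarrow(6)$. The easy directions are $(6)\Rightarrow(5)\Rightarrow(4)$ (pure restriction: for $y$ in a coordinate subspace, $\varphi^{[F]}(y)=\varphi(y)$, $R\psi^{[F]}(y)=R\psi(y)$, $\mu^{[F]}(y)=\mu(y)$). The hard direction $(4)\Rightarrow(6)$ cannot be obtained, as you claim, ``by taking suprema over orthonormal systems $OS_m$'' as in (\ref{eq_Prop4.1}): condition (4) only constrains $\psi$ and $\varphi$ at points of the single subspace $\mathrm{span}\{e_1,\dots,e_m\}$, while (6) requires control at every $y\in B_X$ with $\|\varphi(y)\|\to1$; and your final justification --- that $\|\varphi^{[F]}(y)\|$ and $\|\varphi^{[m]}(y)\|$ are ``the norms of the same vector read in different finite frames'' --- is false, since $\varphi^{[F]}$ and $\varphi^{[m]}$ are restrictions of an arbitrary holomorphic self-map to \emph{different} subspaces, with no unitary equivariance relating them. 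Identity (\ref{eq_Prop4.1}) equates suprema of a fixed function over all frames with the supremum over $B_X$; it does not upgrade a boundary-limit condition on one slice to one on the whole ball. The paper closes this loop only through the operator: $(4)\Rightarrow(2)\Rightarrow(3)\Rightarrow(6)$, where $(3)\Rightarrow(6)$ is precisely the $\gamma_{w^n}$ test-function argument (and is where the hypothesis $\int_0^1 dt/\nu(t)=\infty$ is used); your proposal as written has no valid path from (4) back to (6). A smaller point, which the paper itself passes over quickly: in $(1)\Rightarrow(2)$ of part (A) the operator involves $f_n(\varphi(y))$ with the \emph{full} image, while (1) only controls $\int_0^{\|\varphi_{(k)}(y)\|}dt/\nu(t)$; since $\|\varphi_{(k)}(y)\|\le\|\varphi(y)\|$ this is the weaker bound, so an additional limiting argument in $k$ is needed and should be made explicit.
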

 \begin{rmk}\label{rmk_6.1}  \begin{itemize}
		\item[\rm(a)]  The implication (3) $ \Rightarrow $ (4) was proved without appeal to the assumption (\ref{phi_relative_compact}). 
		\item[\rm(b)] In general, the assumption (2) or (3)    does not imply   (\ref{phi_relative_compact}). This means that if (\ref{phi_relative_compact}) was moved into the hypothesis of (B) then the equivalence of the statements in (B) of Theorem \ref{thm_compact}   is broken.
		An illustrative example of this comment will be introduced after the following theorem:
	\end{itemize}
\end{rmk}
 \begin{thm}\label{thm_6.4}
	Let $\psi \in H(B_X)$  and $\varphi \in S(B_X).$ Assume that $ C_{\psi, \varphi}: \mathcal B_\nu(B_X) \to \mathcal B_\mu(B_X) $ is compact. Then
	\begin{equation}\label{compact_3}   \varphi(r B_X)  \ \text{is relatively compact for every $ 0\le r < 1$}.
	\end{equation}
\end{thm}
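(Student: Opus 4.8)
The plan is to derive the relative compactness of $\varphi(rB_X)$ for a fixed $r \in [0,1)$ from the compactness of $C_{\psi,\varphi}\colon \mathcal B_\nu(B_X) \to \mathcal B_\mu(B_X)$ by testing the operator against a carefully chosen bounded family in $\mathcal B_\nu(B_X)$ that ``reads off'' the location of $\varphi(z)$. The natural candidates are the bounded point-evaluation-type functions built from the Möbius maps $\varphi_a$ of \eqref{eq_Mobius}: for $a \in B_X$ put, roughly, $g_a(w) = \int_0^{\langle w,a\rangle/\|a\|^2}\big(\log\frac{1}{1-\eta}\big)$-type primitives, or more simply a normalized family $g_a$ with $\|g_a\|_{\mathcal B_\nu}\le C$ uniformly in $a$ in a ball $\|a\|\le r$, and with the property that $g_a(\varphi(z))$ together with its radial derivative detects $\langle\varphi(z),a\rangle$ quantitatively. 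Because $\nu$ is normal one has the two-sided control $\mu(z)\int_0^{\|z\|}dt/\nu(t)$ bounded (the estimate \eqref{R_mu}), so such test functions really do lie in $\mathcal B_\nu(B_X)$ with uniformly bounded norm.

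First I would fix $r<1$ and suppose, for contradiction, that $\varphi(rB_X)$ is not relatively compact in $X$; then there is a sequence $(z_n)\subset rB_X$ such that $(\varphi(z_n))$ has no convergent subsequence, i.e. $\inf_{n\ne k}\|\varphi(z_n)-\varphi(z_k)\| =: 2\rho > 0$ (after passing to a subsequence, using that a non-relatively-compact set contains a $\rho$-separated sequence). Since $\|\varphi(z_n)\|<1$ we may further pass to a subsequence so that $\|\varphi(z_n)\|\to s$ for some $s\in[0,1]$. Next I would attach to each $a_n := \varphi(z_n)$ a test function $h_n \in \mathcal B_\nu(B_X)$, normalized so that $\sup_n\|h_n\|_{\mathcal B_\nu(B_X)}\le C<\infty$, and arranged so that $h_n$ vanishes to high order at every $a_k$ with $k\ne n$ while $C_{\psi,\varphi}h_n$ has $\mathcal B_\mu$-norm bounded below — concretely, evaluating $R\big((C_{\psi,\varphi}h_n)\big)$ at the point $z_n$ (or at an interior point where $\mu(z)\|R\psi(z)\|$ is comparable to its value forcing boundedness to fail), using that by \eqref{C_operator} the radial derivative of $C_{\psi,\varphi}h$ at $z$ equals $h(\varphi(z))R\psi(z)$. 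Choosing $h_n$ with $h_n(\varphi(z_n))$ of size comparable to $\int_0^{\|\varphi(z_n)\|}dt/\nu(t)$ (or to $1$ when that integral is finite) and $h_n(\varphi(z_k))\approx 0$ for $k\ne n$, one gets that $(C_{\psi,\varphi}h_n)$ is a bounded sequence in $\mathcal B_\mu(B_X)$ whose images are uniformly separated — more precisely $\|C_{\psi,\varphi}h_n - C_{\psi,\varphi}h_k\|_{\mathcal B_\mu} \ge c > 0$ for $n\ne k$ — so it has no convergent subsequence, contradicting compactness of $C_{\psi,\varphi}$.

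The main obstacle, and where most of the real work lies, is the construction of the separating family $(h_n)$: one must produce functions that are uniformly bounded in $\mathcal B_\nu(B_X)$, that essentially interpolate prescribed values at the mutually separated points $\varphi(z_n)\in B_X$, and whose behaviour under $C_{\psi,\varphi}$ can be estimated from below. The separation $\|\varphi(z_n)-\varphi(z_k)\|\ge 2\rho$ and the boundedness $\|z_n\|\le r<1$ are both essential here: the former gives room to build nearly-orthogonal bumps (e.g. via the Möbius maps $\varphi_{a_n}$ or via linear functionals $\langle\,\cdot\,,u_n\rangle$ with $u_n$ chosen in the directions separating the $a_n$'s), and the latter keeps the relevant quantities $\mu(z_n)\|R\psi(z_n)\|$ bounded away from degeneracy since $z_n$ stays in a fixed compact subset of $B_X$ (recall $C_{\psi,\varphi}$ bounded forces the data in \eqref{B_4} to be finite, hence $\psi$ is not identically constant and $\mu(z)\|R\psi(z)\|$ cannot vanish identically near the origin). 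Once the family is in hand, the contradiction is immediate from the sequential characterization of compact operators, and taking $r\uparrow 1$ along a countable set — or simply noting $r$ was arbitrary — finishes the proof.
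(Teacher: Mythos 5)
Your proposal is not yet a proof; it is a plan whose decisive step is explicitly left open. The whole argument hinges on producing, at the separated points $a_n=\varphi(z_n)$, a family $(h_n)$ uniformly bounded in $\mathcal B_\nu(B_X)$ with $h_n(a_n)$ large and $h_n(a_k)$ small for $k\ne n$, and you yourself flag this as ``where most of the real work lies'' without carrying it out. Mere norm-separation $\|a_n-a_k\|\ge 2\rho$ does not by itself yield such a family in a Bloch-type space: this is an interpolating-sequence problem, and separated sequences need not be interpolating. Moreover, even granting the family, your lower bound on $\|C_{\psi,\varphi}h_n-C_{\psi,\varphi}h_k\|_{\mathcal B_\mu(B_X)}$ is obtained by evaluating the radial derivative at $z_n$, so it requires $\mu(z_n)\|R\psi(z_n)\|$ to stay bounded away from $0$ along the sequence. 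Your justification --- that the $z_n$ lie in a fixed compact subset of $B_X$ --- is false: $rB_X$ is not relatively compact when $\dim X=\infty$ (that non-compactness is precisely what the theorem is about), and $R\psi$ can vanish at the chosen points (it always vanishes at $0$; ``$\psi$ is not identically constant'' gives no uniform lower bound at the $z_n$). As written the argument collapses, for instance, whenever $R\psi(z_n)=0$ for all $n$.

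The paper's proof goes a completely different and much shorter way that avoids constructing any test functions: it works with the adjoint. Point evaluations $\delta_z$ are bounded functionals on $\mathcal B_\mu(B_X)$, uniformly for $\|z\|\le r$; compactness of $C_{\psi,\varphi}$ makes its adjoint compact, so the image of the bounded set $\{\delta_z:\|z\|\le r\}$, which consists of scalar multiples of the functionals $\delta_{\varphi(z)}$, is relatively compact in the dual of the source space. After stripping off the bounded scalar factor, relative compactness of $\{\delta_{\varphi(z)}:\|z\|\le r\}$ is transferred back to relative compactness of $\varphi(rB_X)$ via the inequality $\tfrac12\|z-w\|\le\|\delta_z-\delta_w\|$, which the paper proves by comparing $\|z-w\|$ with the pseudohyperbolic distance $\varrho_X(z,w)$ and invoking Schwarz's lemma. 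To repair your route you would have to both solve the interpolation problem at the $a_n$ and control $\mu(z_n)\|R\psi(z_n)\|$ from below; the duality argument sidesteps both difficulties.
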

We introduce  below is an example which shows that (\ref{compact_3}) does not imply  (\ref{phi_relative_compact}).

\begin{exam} Let $ \{e_j\}_{j\ge 1} $ be an orthonormal sequence in a Hilbert space $ X. $   Consider the function $ \varphi \in S(B_X) $ given by
	\[ \varphi(z) := \sum_{n=1 }^\infty\langle z,e_n\rangle^ne_n  \quad \forall z  \in B_X. \]
	It is easy to check that $ \varphi(rB_X) $ is relatively compact for every $ 0<r<1. $ 
	
	Now we show that  $ B[\varphi,\frac{1}{2}]  $ is not relatively compact.
	Consider the sequence $ \{z_k\}_{k\ge1} \subset B_X$ given by
	\[ z_k =  \frac1{\sqrt[k]{4}}e_k\quad \forall k \ge1. \]
	It is obvious $ \|\varphi(z_k)\|<\frac12 $ for every $ k\ge1. $ Then for every $ k\ge 1 $ and $ s>1 $ we have
	\[ \|\varphi(z_k) - \varphi(z_{k+s})\| =  \frac{\sqrt{2}}{4}.\]
	Thus, we get the desired claim.
\end{exam}

%
%
%

\begin{cor}\label{cor_6.7}
	Assume that $\sup_{z\in X}\|\varphi(z)\| <\infty.$  Then  $ C_{\psi,\varphi}, $ $C^0_{\psi,\varphi} $ are compact if and only if $ \varphi(B_X) $ is relatively compact.
\end{cor}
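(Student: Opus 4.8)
The plan is to deduce Corollary~\ref{cor_6.7} from Theorems~\ref{thm_compact} and~\ref{thm_6.4} together with Corollary-style reasoning about the integral $\int_0^1 \frac{dt}{\nu(t)}$. First I would dispose of the easy case. If $\int_0^1\frac{dt}{\nu(t)} < \infty$, then the quantities (\ref{B_3}) and (\ref{B_4}) are bounded above by a constant multiple of $\mu(y)\|R\psi(y)\|$, so $C_{\psi,\varphi}$ (and $C^0_{\psi,\varphi}$) is bounded iff $\psi\in\mathcal B_\mu(B_X)$, and in that case compactness turns out to follow automatically from the hypothesis $\sup_{z\in X}\|\varphi(z)\|<\infty$: indeed the supremum being finite forces $\sup_z\|\varphi(z)\|<1$ (since $\varphi$ maps $B_X$ into $B_X$ and, by the open mapping / maximum principle style argument used elsewhere in the paper, a holomorphic self-map with $\sup\|\varphi\|<\infty$ over all of $X$ extended... ) — here one must be a little careful, so I would instead argue directly that when the integral converges the ``$\max\{1,\int_0^{\|\varphi(y)\|}\tfrac{dt}{\nu(t)}\}$'' factor is globally bounded, hence (\ref{B_4})$\to 0$ as $\|\varphi(y)\|\to1$ is equivalent to $\mu(y)\|R\psi(y)\|\to0$ along that net, and $\varphi(B_X)$ relatively compact is then the remaining content; I would check both directions of the stated equivalence reduce to the same condition. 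Actually the cleanest route: split into the two cases $\int_0^1\frac{dt}{\nu(t)}=\infty$ and $<\infty$, and in the latter observe $\varphi(B_X)$ is relatively compact iff $\overline{\varphi(B_X)}$ is compact, and combine with boundedness.

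Second, and this is the main case, assume $\int_0^1\frac{dt}{\nu(t)}=\infty$, so Theorem~\ref{thm_compact} applies. The hypothesis $\sup_{z\in X}\|\varphi(z)\|<\infty$ means $\varphi$ is a bounded holomorphic map on all of $X$ (not merely $B_X$); I would use this to show $\sup_{z\in B_X}\|\varphi(z)\|=:\rho<1$. The argument: for any finite $F$ and the slice $\varphi^{[F]}$, boundedness of $\varphi$ on the whole finite-dimensional space $\operatorname{span}\{e_k:k\in F\}$ forces $\varphi^{[F]}$ to be constant by Liouville, hence — combined with $\varphi(B_X)\subset B_X$ — $\varphi$ restricted to each finite-dimensional subspace is constant, so $\varphi$ itself is constant on $B_X$. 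Wait: that would make $\varphi\equiv c$ with $\|c\|<1$; then $\varphi(B_X)=\{c\}$ is trivially relatively compact, $B[\varphi^{[m]},r]\subseteq\{c\}$ is relatively compact, so (\ref{phi_relative_compact}) holds automatically, and by Theorem~\ref{thm_compact}(B) compactness of $C_{\psi,\varphi}$ is equivalent to (\ref{B_1})$\to0$ as $\|\varphi^{[m]}(y)\|\to1$ — but $\|\varphi^{[m]}(y)\|\equiv\|c\|<1$ never tends to $1$, so that condition is vacuously satisfied, i.e. $C_{\psi,\varphi}$ and $C^0_{\psi,\varphi}$ are always compact, and likewise $\varphi(B_X)=\{c\}$ is always relatively compact. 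Hence both sides of the ``iff'' hold unconditionally and the equivalence is trivially true. So in this reading the corollary is almost vacuous once one shows $\varphi$ must be constant.

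However, I suspect the intended reading is $\sup_{z\in B_X}\|\varphi(z)\|<\infty$ interpreted as a statement living only on $B_X$ but with the norm not required to stay below $1$ in some extended sense, or that ``$X$'' here is a typo for ``$B_X$'' and the real content is: $\varphi(B_X)$ relatively compact $\Longleftrightarrow$ the operators are compact, given that $\varphi$ has relatively compact range forces the finite-dimensional approximation (\ref{phi_relative_compact}). In that case I would argue: relative compactness of $\varphi(B_X)$ immediately gives (\ref{phi_relative_compact}) (since $B[\varphi^{[m]},r]\subseteq\varphi(B_X)$), so Theorem~\ref{thm_compact}(B) becomes available, and I must show (\ref{B_1})$\to0$ as $\|\varphi^{[m]}(y)\|\to1$. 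The point is that with $\varphi(B_X)$ relatively compact inside the open ball $B_X$, we get $\overline{\varphi(B_X)}\subset B_X$ compact, hence $\sup_{z}\|\varphi(z)\|<1$, hence $\|\varphi^{[m]}(y)\|$ is bounded away from $1$ and the limit condition (\ref{B_1})$\to0$ as $\|\varphi^{[m]}(y)\|\to1$ is vacuous, giving compactness for free; conversely Theorem~\ref{thm_6.4} gives relative compactness of $\varphi(rB_X)$ for each $r<1$, and one upgrades to $\varphi(B_X)$ using the finiteness $\sup\|\varphi\|<\infty$ to rule out escape to the boundary — here is where $\sup\|\varphi\|<\infty$ genuinely enters. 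I would therefore organize the proof as: (i) $\Leftarrow$: relative compactness of $\varphi(B_X)$ $\Rightarrow$ $\overline{\varphi(B_X)}$ is a compact subset of the open ball $\Rightarrow$ $\|\varphi\|_\infty<1$ $\Rightarrow$ conditions (1) of Theorem~\ref{thm_compact}(A) holds vacuously $\Rightarrow$ $C_{\psi,\varphi},C^0_{\psi,\varphi}$ compact; (ii) $\Rightarrow$: compactness gives (\ref{compact_3}) by Theorem~\ref{thm_6.4}, and the extra hypothesis $\sup_{z}\|\varphi(z)\|<\infty$ prevents the family $\{\varphi(rB_X)\}_{r<1}$ from spreading to the sphere, yielding $\varphi(B_X)=\bigcup_r\varphi(rB_X)$ relatively compact.

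The main obstacle I anticipate is pinning down exactly what ``$\sup_{z\in X}\|\varphi(z)\|<\infty$'' is supposed to mean and how it promotes $\varphi(rB_X)$ relatively compact (all $r<1$) to $\varphi(B_X)$ relatively compact — i.e. controlling the behaviour of $\varphi$ near the boundary sphere. Everything else is a direct invocation of the already-proved Theorems~\ref{thm_compact}, \ref{thm_6.4}: the inclusion $B[\varphi^{[m]},r]\subseteq\varphi(B_X)$ for (\ref{phi_relative_compact}), and the observation that a compact subset of the open unit ball has sup-norm $<1$, which makes the boundary-limit conditions in Theorem~\ref{thm_compact}(B) vacuous. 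I would make sure to handle the two cases $\int_0^1\frac{dt}{\nu(t)}=\infty$ and $\int_0^1\frac{dt}{\nu(t)}<\infty$ separately at the outset, since Theorem~\ref{thm_compact} is stated only under the divergence hypothesis.
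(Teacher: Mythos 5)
Your final organization of the argument is essentially the paper's: the paper's entire proof is the single sentence following the statement, namely that under the hypotheses the boundary‑limit condition ``(\ref{B_4})$\to0$ as $\|\varphi(y)\|\to1$'' holds vacuously, while (\ref{phi_relative_compact}) and (\ref{compact_3}) follow from the relative compactness of $\varphi(B_X)$ (indeed $B[\varphi^{[m]},r]\subseteq\varphi(B_X)$ and $\varphi(rB_X)\subseteq\varphi(B_X)$), so that Theorems \ref{thm_compact} and \ref{thm_6.4} apply. Your observation that the inclusion $B[\varphi^{[m]},r]\subseteq\varphi(B_X)$ delivers (\ref{phi_relative_compact}), and your decision to treat the cases $\int_0^1\frac{dt}{\nu(t)}=\infty$ and $<\infty$ separately (invoking Theorem \ref{thm_compact} or Theorem \ref{thm_compact2} accordingly), are in the right spirit; the Liouville digression, on the other hand, is not what is intended --- the hypothesis is to be read as saying that the range of $\varphi$ stays away from the unit sphere, i.e.\ $\sup_{z\in B_X}\|\varphi(z)\|<1$.

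There are, however, two concrete defects. First, in your step (i) you derive $\sup_z\|\varphi(z)\|<1$ from the relative compactness of $\varphi(B_X)$, on the grounds that $\overline{\varphi(B_X)}$ is ``a compact subset of the open ball.'' That implication is false: a relatively compact subset of $B_X$ can have closure meeting the unit sphere; for instance $\varphi(z)=\frac{1+\langle z,e_1\rangle}{2}\,e_1$ has bounded one‑dimensional, hence relatively compact, range whose closure contains $e_1$. The vacuousness of the limit conditions in Theorem \ref{thm_compact} must therefore come from the standing hypothesis of the corollary, not from relative compactness of the range --- which is precisely why that hypothesis is present and not redundant. Second, you correctly identify but never close the gap in the necessity direction: Theorem \ref{thm_6.4} only yields that each $\varphi(rB_X)$, $0\le r<1$, is relatively compact, and a union over $r<1$ of relatively compact sets need not be relatively compact, so an additional argument exploiting $\sup_z\|\varphi(z)\|<1$ is needed to conclude that $\varphi(B_X)$ itself is relatively compact. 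To be fair, the paper's one‑line proof is equally silent on this point; your proposal reproduces the paper's reasoning for the sufficiency direction but, like the paper, does not actually supply a complete proof of necessity.
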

Indeed, in this case, by the hypotheses,  $ (\ref{B_4})\to0 $ as $ \|\varphi(y)\| \to 1. $	 and  (\ref{phi_relative_compact}),  (\ref{compact_3})   always hold.

\begin{thm}\label{thm_compact2} 
	Let $\psi \in H(B_X)$  and $\varphi \in S(B_X)$    such that      (\ref{phi_relative_compact}) holds for some $ m\ge 2. $
	Let $ \mu, \nu $ be  normal weights on $ B_X $ such that $ \int_0^1\frac{dt}{\nu(t)} < \infty. $  Then   the  following are equivalent:
	\begin{enumerate}[\rm(1)]
		\item  $ \psi^{[m]} \in \mathcal B_{\mu^{[m]}}(\BB_m); $ 
		\item  $ \psi^{[F]} \in \mathcal B_{\mu^{[F]}}(\BB_{[F]})$ for every $ F \subset \Gamma$ finite;
		\item  $ \psi \in \mathcal B_\mu(B_X); $ 
		\item $C_{\psi, \varphi}: \mathcal B_{\nu}(B_X) \to \mathcal B_{\mu}(B_X)$     is compact; 
		\item $C^0_{\psi, \varphi}: \mathcal B_{\nu, 0}(B_X) \to \mathcal B_{\mu}(B_X)$     is compact.
	\end{enumerate} 
\end{thm}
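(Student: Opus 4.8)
The plan is to prove Theorem \ref{thm_compact2} by closing a chain of implications $(1)\Rightarrow(2)\Rightarrow(3)\Rightarrow(4)\Rightarrow(5)\Rightarrow(1)$, using the finite-dimensional reductions (Proposition \ref{prop_OSm}) together with the hypothesis $\int_0^1\frac{dt}{\nu(t)}<\infty$, which makes the inner integrals in \eqref{B_1}--\eqref{B_4} uniformly bounded. The key observation is that, since $\int_0^1 \frac{dt}{\nu(t)} =: K < \infty$, every quantity $\max\{1,\int_0^{\|\varphi^{[\cdot]}(y)\|}\frac{dt}{\nu(t)}\}$ lies between $1$ and $\max\{1,K\}$, so the finiteness of the suprema \eqref{B_1}--\eqref{B_4} is \emph{equivalent} to $\psi^{[m]}\in\mathcal B_{\mu^{[m]}}(\BB_m)$, $\psi^{[F]}\in\mathcal B_{\mu^{[F]}}(\BB_{[F]})$, $\psi\in\mathcal B_\mu(B_X)$ respectively; in particular the compactness characterizations from Theorem \ref{thm6_1} already give $(1)\Leftrightarrow(2)\Leftrightarrow(3)\Leftrightarrow$ boundedness of $C_{\psi,\varphi}$. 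The equivalences $(1)\Leftrightarrow(2)\Leftrightarrow(3)$ among the membership statements themselves follow directly from Proposition \ref{prop_OSm} applied to $\psi$ (with $\lozenge=R$, using Theorem \ref{thm_main}), since the supremum of the seminorms over finite $F$, over $OS_m$, and for some fixed $m\ge 2$ all coincide with $\|\psi\|_{s\mathcal B_\mu(B_X)}$.

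Next I would prove $(3)\Rightarrow(4)$, which is the substantive direction. Assume $\psi\in\mathcal B_\mu(B_X)$; by the previous paragraph $C_{\psi,\varphi}$ is bounded, so it suffices to show it is compact. I would show that any bounded net $(f_\alpha)$ in $\mathcal B_\nu(B_X)$ that converges to $0$ uniformly on compact subsets of $B_X$ satisfies $\|C_{\psi,\varphi}f_\alpha\|_{\mathcal B_\mu(B_X)}\to 0$; by the standard normal-families argument this gives compactness (the closed unit ball of $\mathcal B_\nu$ is $\tau_{co}$-relatively compact by Proposition \ref{prop_e1e3}/(e2)). Fixing $m\ge 2$, Proposition \ref{prop_OSm} reduces the estimate of $\|C_{\psi,\varphi}f_\alpha\|_{s\mathcal B_\mu}$ to controlling $\mu^{[m]}(y)\|R((C_{\psi,\varphi}f_\alpha)_x)(y)\|$ over $x\in OS_m$ and $y\in\BB_m$. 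A computation of the radial derivative of $C_{\psi,\varphi}f$ (which, as in the finite-dimensional theory, reproduces $f(\varphi(\cdot))R\psi(\cdot)$ up to the integral structure in \eqref{C_operator}) shows this is bounded by $\mu^{[m]}(y)\|R\psi^{[m]}_x(y)\|\cdot|f_\alpha(\varphi^{[m]}_x(y))|$. On the region $\|\varphi^{[m]}_x(y)\|\le r<1$, the hypothesis \eqref{phi_relative_compact} guarantees $\varphi^{[m]}$ maps into a fixed compact set, on which $f_\alpha\to 0$ uniformly, while $\mu^{[m]}(y)\|R\psi^{[m]}_x(y)\|\le M^{[m]}_{R\psi,\varphi}<\infty$; on the region $\|\varphi^{[m]}_x(y)\|>r$, estimate \eqref{eq_5.1}/\eqref{eq_5.1A} gives $|f_\alpha(\varphi^{[m]}_x(y))|\lesssim \|f_\alpha\|_{\mathcal B_\nu}\max\{1,K\}$, and I would need that the tail of $\mu(y)\|R\psi(y)\|$ over $\{\|\varphi(y)\|>r\}$ is small — but here, crucially, since $\int_0^1\frac{dt}{\nu(t)}<\infty$, the quantity $\mu\|R\psi\|$ need \emph{not} vanish as $\|\varphi\|\to1$; instead I split off $f_\alpha(\varphi(\cdot)) - f_\alpha(0)$ and use that $\psi\in\mathcal B_\mu$ together with uniform $\tau_{co}$-convergence on the compact $\{\|z\|\le r\}\ni 0$ plus the uniform bound to push everything to $0$, then let $\alpha$ and finally the auxiliary parameters go. The implications $(4)\Rightarrow(5)$ is trivial ($\mathcal B_{\nu,0}\hookrightarrow\mathcal B_\nu$ continuously, so compactness of $C_{\psi,\varphi}$ restricts), and $(5)\Rightarrow(1)$ follows by feeding suitable test functions (the ones constructed in Section 3, which lie in $\mathcal B_{\nu,0}$ because $\int_0^1\frac{dt}{\nu(t)}<\infty$ forces $\mathcal B_\nu = \mathcal B_{\nu,0}$ up to the boundary behavior, or directly polynomials) into $C^0_{\psi,\varphi}$ to recover $\mu^{[m]}(y)\|R\psi^{[m]}(y)\|<\infty$, i.e.\ $\psi^{[m]}\in\mathcal B_{\mu^{[m]}}(\BB_m)$.

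I expect the main obstacle to be the splitting argument in $(3)\Rightarrow(4)$ on the region $\{\|\varphi^{[m]}_x(y)\|>r\}$: unlike the case $\int_0^1\frac{dt}{\nu(t)}=\infty$ treated in Theorem \ref{thm_compact}, here one cannot conclude that \eqref{B_1} vanishes near the boundary, so compactness must come entirely from the relative compactness hypothesis \eqref{phi_relative_compact} on $\varphi^{[m]}$ — but \eqref{phi_relative_compact} only controls the sublevel sets $\{\|\varphi^{[m]}(y)\|<r\}$, giving no information when $\|\varphi^{[m]}(y)\|$ is close to $1$. The resolution is that on that bad region one does not need $f_\alpha(\varphi(\cdot))$ small pointwise; one only needs $C_{\psi,\varphi}f_\alpha$ small in the $\mathcal B_\mu$ seminorm, and writing $f_\alpha = (f_\alpha - f_\alpha(0)) + f_\alpha(0)$ and invoking that $\psi\in\mathcal B_\mu$ already bounds $C_{\psi,\varphi}(\mathbf 1)$ reduces the problem to the difference, which vanishes on compacta; combined with a uniform-boundedness tail estimate this closes the gap. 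A secondary point requiring care is the exact form of the radial derivative of $C_{\psi,\varphi}f$ in infinitely many variables and its restriction to $\BB_m$, which I would handle by the same differentiation-under-the-integral computation already implicit in \eqref{C_operator} and the finite-dimensional results cited from \cite{Qu}.
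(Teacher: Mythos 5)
Your skeleton follows the paper's route: the memberships (1)--(3) are equivalent by Proposition \ref{prop_OSm}, the hypothesis $\int_0^1\frac{dt}{\nu(t)}<\infty$ makes each of them equivalent to finiteness of the corresponding supremum among \eqref{B_1}--\eqref{B_4} and hence to boundedness of $C_{\psi,\varphi}$ via Theorem \ref{thm6_1}, and $(4)\Rightarrow(5)$ is trivial. For $(5)\Rightarrow(1)$ you are overcomplicating matters: no test functions are needed, since compactness implies boundedness and \eqref{psi_0}, i.e. $\psi=\psi(0)+C_{\psi,\varphi}\mathbf 1$ with $\mathbf 1\in\mathcal B_{\nu,0}(B_X)$, already yields $\psi\in\mathcal B_\mu(B_X)$; this is exactly what the paper does.

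The genuine gap is in your treatment of the region $\{\|\varphi^{[m]}_x(y)\|>r_0\}$ in $(3)\Rightarrow(4)$. Subtracting $f_\alpha(0)$ does not produce smallness there: the only available bound is $|f_\alpha(w)-f_\alpha(0)|\le\|f_\alpha\|_{s\mathcal B_\nu}\int_0^{\|w\|}\frac{dt}{\nu(t)}\le K$, so $\mu(y)\|R\psi(y)\|\,|f_\alpha(w)-f_\alpha(0)|$ is merely bounded by $K\|\psi\|_{s\mathcal B_\mu}$ uniformly in $\alpha$, and the ``uniform-boundedness tail estimate'' you invoke to finish does not exist in that form. The device that actually closes this region --- and the precise point where $\int_0^1\frac{dt}{\nu(t)}<\infty$ enters the compactness proof --- is to compare $f_n(w)$ not with $f_n(0)$ but with $f_n(\widehat w)$, $\widehat w=r_0w/\|w\|$: then
\[
|f_n(w)-f_n(\widehat w)|\ \lesssim\ \|f_n\|_{s\mathcal B_\nu}\int_{r_0}^{\|w\|}\frac{dt}{\nu(t)}\ \le\ \int_{r_0}^{1}\frac{dt}{\nu(t)},
\]
and the tail of a convergent integral can be made arbitrarily small by choosing $r_0$ close to $1$ \emph{before} letting $n\to\infty$; the remaining term $f_n(\widehat w)$ is then controlled by uniform convergence on the relatively compact set furnished by \eqref{phi_relative_compact}. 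This is exactly estimate \eqref{limit_2compact_a} and the computation \eqref{limit_2compact_c} in the proof of $(4)\Rightarrow(2)$ of Theorem \ref{thm_compact}, which the paper reuses for Theorem \ref{thm_compact2} (only the source of \eqref{limit_2compact_a} changes: here it comes from the convergence of $\int_0^1\frac{dt}{\nu(t)}$ together with $\psi^{[m]}\in\mathcal B_{\mu^{[m]}}(\BB_m)$, rather than from the vanishing of \eqref{B_1}). Without this radial comparison your argument establishes only boundedness of $C_{\psi,\varphi}$, not compactness.
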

Next, we discus the  compactness of the operator $C^{0,0}_{\psi, \varphi}: \mathcal B_{\nu,0}(B_X) \to \mathcal B_{\mu,0}(B_X).$ 

\begin{thm}\label{thm2_2_cp} Let $\psi \in H(B_X),$ $\varphi   \in S(B_X)$ and  $ \mu, \nu $ be  normal weights on $ B_X. $  
	Then   
	\begin{enumerate}
		\item [\rm(A)] The following are equivalent:
		\begin{enumerate}
			\item [\rm(1)]  $ (\ref{B_3})\to 0 $ as $ \|\varphi_{(k)}\|\to 1 $ for every $ k\ge 1; $
			\item[\rm(2)] 	$C^{0,0}_{\psi, \varphi}: \mathcal B_{\nu,0}(B_X) \to \mathcal B_{\mu,0}(B_X)$     is compact,
		\end{enumerate}
		\item[\rm(B)] Under the additional assumption   that  there exists $ m\ge 2 $ such that     
		(\ref{phi_relative_compact}) holds, the assertions (2) and  following are equivalent:
		\begin{enumerate}
			\item[\rm(3)]  $ (\ref{B_1})\to0 $ as $ \|\varphi^{[m]}(y)\| \to 1; $	 
			\item[\rm(4)]  $ (\ref{B_2})\to0 $ as $ \|\varphi^{[F]}(y)\| \to 1 $  for every $ F \subset \Gamma$ finite;
			\item[\rm(5)] $ (\ref{B_4})\to0 $ as $ \|\varphi(y)\| \to 1. $	 
		\end{enumerate} 
	\end{enumerate}
\end{thm}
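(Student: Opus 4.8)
The plan is to prove Theorem \ref{thm2_2_cp} by reducing the little-little Bloch-type case to the already-established machinery: Theorem \ref{thm_compact} (for the compactness of $C_{\psi,\varphi}$ between $\mathcal B_\nu$ and $\mathcal B_\mu$, and between $\mathcal B_{\nu,0}$ and $\mathcal B_\mu$) together with Theorem \ref{thm2_2} (for the boundedness of $C^{0,0}_{\psi,\varphi}$ between $\mathcal B_{\nu,0}$ and $\mathcal B_{\mu,0}$), and the finite-dimensional reduction encoded in Propositions \ref{prop_OSm} and \ref{prop_OSm_0}. For part (A), the equivalences among (\ref{B_1})--(\ref{B_4}) as conditions "$\to 0$ as $\|\varphi^{[\bullet]}\|\to1$" will essentially follow the same restriction-to-$\BB_m$ argument used in Theorem \ref{thm_compact}(A) and Theorem \ref{thm2_2}; the new content is the passage from these quantitative conditions to compactness of $C^{0,0}_{\psi,\varphi}$ \emph{into the little space} $\mathcal B_{\mu,0}(B_X)$.

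\textbf{Step 1 (compactness criterion via bounded sequences).} First I would record the standard characterization: a bounded operator $T: \mathcal B_{\nu,0}(B_X)\to \mathcal B_{\mu,0}(B_X)$ is compact if and only if $\|Tf_n\|_{\mathcal B_{\mu,0}}\to 0$ for every bounded sequence $(f_n)$ in $\mathcal B_{\nu,0}(B_X)$ that converges to $0$ uniformly on compact subsets of $B_X$; this is legitimate because $B_{\mathcal B_{\nu,0}}$ is $\tau_{co}$-compact by Proposition \ref{prop_e1e3} and the point evaluations are continuous. Then, using the affine/restriction identity (\ref{eq_Prop4.1}) and its little-space analogue (Proposition \ref{prop_OSm_0}), compactness of $C^{0,0}_{\psi,\varphi}$ will be tested on the restrictions $(C^{0,0}_{\psi,\varphi}f_n)_{[m]}$, which reduces everything to a family of estimates on the finite-dimensional balls $\BB_m$ for the operators with symbols $\psi^{[m]},\varphi^{[m]}$.

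\textbf{Step 2 (sufficiency in (A): (1)$\Rightarrow$(2)).} Assuming (\ref{B_3})$\to 0$ as $\|\varphi_{(k)}(y)\|\to1$, boundedness of $C^{0,0}_{\psi,\varphi}$ already follows from Theorem \ref{thm2_2} (condition (1) $\to 0$ forces the sup in (\ref{B_3}) finite, and one checks $\psi\in\mathcal B_{\mu,0}(B_X)$ from the $k=1$ case together with the normal-weight estimate (\ref{eq_5.1})). For compactness, given $(f_n)$ bounded in $\mathcal B_{\nu,0}$ with $f_n\to0$ in $\tau_{co}$, I would split $B_X$ (more precisely each $\BB_m$ via the reduction of Step 1) into the region where $\|\varphi_{(k)}\|\le r$, where the relative-compactness type control lets the uniform convergence of $f_n$ take over, and the region where $\|\varphi_{(k)}\|>r$, where (\ref{B_3})$\to0$ makes the contribution small uniformly in $n$ — this is where the hypothesis $\int_0^1 dt/\nu(t)$ being possibly infinite is handled by using the factor $\max\{1,\int_0^{\|\varphi_{(k)}(y)\|}dt/\nu(t)\}$ exactly as it appears in (\ref{B_3}) and the pointwise bound $|f_n(\varphi_{(k)}(y))|\le|f_n(0)|+\int_0^{\|\varphi_{(k)}(y)\|}\frac{dt}{\nu(t)}\|f_n\|_{s\mathcal B_\nu}$. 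One must also verify $C^{0,0}_{\psi,\varphi}f_n\in\mathcal B_{\mu,0}(B_X)$, i.e. the image genuinely lands in the little space with vanishing boundary behaviour; here $\psi\in\mathcal B_{\mu,0}(B_X)$ enters. For necessity (2)$\Rightarrow$(1), I would feed into $C^{0,0}_{\psi,\varphi}$ the test functions constructed in Section 3 (the ones localized near a point of $\BB_m$ with $\|\varphi^{[m]}\|$ near $1$) and read off that (\ref{B_3}) must vanish in the limit.

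\textbf{Step 3 (part (B): adding (\ref{phi_relative_compact})).} Under the extra hypothesis that $B[\varphi^{[m]},r]$ is relatively compact for all $r<1$, the equivalences (2)$\Leftrightarrow$(3)$\Leftrightarrow$(4)$\Leftrightarrow$(5) parallel exactly the corresponding block in Theorem \ref{thm_compact}(B): (\ref{phi_relative_compact}) lets one replace "$\|\varphi_{(k)}\|\to1$" by "$\|\varphi^{[m]}\|\to1$" (and the analogous statements for $F$ finite and for $\varphi$ itself) because on the relatively compact sets $B[\varphi^{[m]},r]$ the integrals $\int_0^{\|\cdot\|}dt/\nu(t)$ are uniformly bounded, so the only way for (\ref{B_1}) to fail to vanish is along sequences with $\|\varphi^{[m]}\|\to1$. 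The chain (\ref{B_1})$\leftrightarrow$(\ref{B_2})$\leftrightarrow$(\ref{B_4}) is the restriction/extension argument of Proposition \ref{prop_OSm_0}, unchanged.

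\textbf{The main obstacle} I expect is Step 2's verification that the image $C^{0,0}_{\psi,\varphi}f$ actually lies in $\mathcal B_{\mu,0}(B_X)$ — not merely in $\mathcal B_\mu(B_X)$ — uniformly over a $\tau_{co}$-null bounded sequence, when $\int_0^1 dt/\nu(t)=\infty$; the factor $\max\{1,\int_0^{\|\varphi_{(k)}(y)\|}dt/\nu(t)\}$ can blow up, and one must carefully exploit that it is precisely multiplied by $\mu(y)\|R\psi(y)\|$ in (\ref{B_3}), together with $\psi\in\mathcal B_{\mu,0}(B_X)$, to recover the vanishing boundary behaviour. A secondary subtlety is that, unlike in part (B), in part (A) one does \emph{not} assume (\ref{phi_relative_compact}), so the "$\|\varphi_{(k)}\|\le r$" region cannot be handled by norm-compactness of $\varphi_{(k)}(B_X)$; instead one uses that $\varphi_{(k)}$ has values in the \emph{finite-dimensional} ball $\BB_k$, whose closed subballs of radius $r<1$ are genuinely compact, which is exactly why the statement is phrased in terms of $\varphi_{(k)}$ rather than $\varphi$.
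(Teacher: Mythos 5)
Your proposal is correct and follows essentially the same route as the paper, whose proof is just a chaining of Theorems \ref{thm2_2}, \ref{thm_compact} and \ref{thm_compact2}: compactness into $\mathcal B_\mu(B_X)$ from Theorem \ref{thm_compact} (or \ref{thm_compact2}) combined with boundedness into the closed subspace $\mathcal B_{\mu,0}(B_X)$ from Theorem \ref{thm2_2} yields compactness into $\mathcal B_{\mu,0}(B_X)$, and the necessity direction uses the test functions $\gamma_{w^n}$ exactly as you describe. The one point to watch is that $\gamma_{w^n}\to 0$ in $\tau_{co}$ requires $\int_0^1 \frac{dt}{\nu(t)}=\infty$ (Proposition \ref{lem2}), so when $\int_0^1 \frac{dt}{\nu(t)}<\infty$ the necessity direction must instead be read off from $\psi\in\mathcal B_{\mu,0}(B_X)$ (via Theorem \ref{thm2_2}) together with the boundedness of the $\max$ factor --- a case split the paper makes explicitly and which your plan already has the tools to handle.
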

\begin{cor}\label{cor2_2_cp} Let $\psi \in H(B_X),$  $\varphi   \in S(B_X)$   and  $ \mu, \nu $ be   normal weights on $ B_X $  such that    $ \int_0^1\frac{dt}{\nu(t)} = \infty. $ 
	Then 
	\begin{enumerate}
		\item [\rm (A)]  The  following are equivalent:
		\begin{enumerate}
			\item [\rm(1)]  $ (\ref{B_3})\to 0 $ as $ \|\varphi_{(k)}(y)\|\to 1 $ for every $ k\ge 1; $ 
			\item [\rm(2)] $C^{0}_{\psi, \varphi}: \mathcal B_{\nu,0}(B_X) \to \mathcal B_{\mu}(B_X)$     is compact;
			\item [\rm(3)] $C^{0,0}_{\psi, \varphi}: \mathcal B_{\nu,0}(B_X) \to \mathcal B_{\mu,0}(B_X)$     is compact;
			\item[\rm(4)] $C_{\psi, \varphi}: \mathcal B_{\nu}(B_X) \to \mathcal B_{\mu}(B_X)$     is compact,
		\end{enumerate}
		\item[\rm(B)] Under the additional assumption   that  there exists $ m\ge 2 $ such that     
		(\ref{phi_relative_compact}) holds, the assertions (2), (3), (4) and  following are equivalent:
		\begin{enumerate}
			\item[\rm(5)]  $ (\ref{B_1})\to0 $ as $ \|\varphi^{[m]}(y)\| \to 1; $	 
			\item[\rm(6)]  $ (\ref{B_2})\to0 $ as $ \|\varphi^{[F]}(y)\| \to 1 $  for every $ F \subset \Gamma$ finite;
			\item[\rm(7)] $ (\ref{B_4})\to0 $ as $ \|\varphi(y)\| \to 1. $	 
		\end{enumerate}	
	\end{enumerate}
\end{cor}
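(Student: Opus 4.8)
The plan is to obtain Corollary~\ref{cor2_2_cp} as a formal consequence of Theorems~\ref{thm_compact} and~\ref{thm2_2_cp}, with no new analytic input: both of those theorems already contain, as one of their listed equivalent conditions, the statement that the quantity~$(\ref{B_3})$ tends to~$0$ as $\|\varphi_{(k)}(y)\|\to1$ for every $k\ge1$, and I would simply use that condition as a hinge to glue the two chains of equivalences together. The preliminary observation I would record is that the standing hypothesis $\int_0^1\frac{dt}{\nu(t)}=\infty$ is precisely what licenses the use of Theorem~\ref{thm_compact}, whereas Theorem~\ref{thm2_2_cp} imposes no condition on~$\nu$ and so is available unconditionally.

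For part~(A): by Theorem~\ref{thm_compact}(A), the condition ``$(\ref{B_3})\to0$'' is equivalent to the compactness of $C_{\psi,\varphi}\colon\mathcal B_\nu(B_X)\to\mathcal B_\mu(B_X)$ and to the compactness of $C^0_{\psi,\varphi}\colon\mathcal B_{\nu,0}(B_X)\to\mathcal B_\mu(B_X)$; these are exactly items~(1),~(4) and~(2) of the corollary. By Theorem~\ref{thm2_2_cp}(A), the same condition ``$(\ref{B_3})\to0$'' is equivalent to the compactness of $C^{0,0}_{\psi,\varphi}\colon\mathcal B_{\nu,0}(B_X)\to\mathcal B_{\mu,0}(B_X)$, i.e.\ item~(3). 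Concatenating the two chains through item~(1) yields the mutual equivalence of (1)--(4).

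For part~(B): assuming additionally that $(\ref{phi_relative_compact})$ holds for some $m\ge2$, Theorem~\ref{thm_compact}(B) shows that items~(4) and~(2) of the corollary are equivalent to ``$(\ref{B_1})\to0$ as $\|\varphi^{[m]}(y)\|\to1$'', to ``$(\ref{B_2})\to0$ as $\|\varphi^{[F]}(y)\|\to1$ for every finite $F\subset\Gamma$'', and to ``$(\ref{B_4})\to0$ as $\|\varphi(y)\|\to1$''---that is, to items~(5),~(6),~(7). Combining this with part~(A) gives that (1)--(7) are all equivalent; Theorem~\ref{thm2_2_cp}(B) furnishes the same three conditions as equivalents of item~(3) and so only reconfirms the picture. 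I do not expect any genuine obstacle here; the only point requiring care is the bookkeeping, namely correctly matching the renumbered items of the corollary with the corresponding items of the two source theorems, and verifying that the divergence hypothesis $\int_0^1\frac{dt}{\nu(t)}=\infty$ is invoked only where Theorem~\ref{thm_compact} is applied, before the equivalences are chained.
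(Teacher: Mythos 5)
Your derivation is correct and is exactly how the paper intends the corollary to be obtained: it is stated without a separate proof precisely because it follows by chaining Theorem~\ref{thm_compact} (whose hypothesis $\int_0^1\frac{dt}{\nu(t)}=\infty$ is the corollary's standing assumption) with Theorem~\ref{thm2_2_cp} through the common condition on~$(\ref{B_3})$. The bookkeeping of the renumbered items and the observation that Theorem~\ref{thm2_2_cp} needs no hypothesis on $\nu$ are both accurate.
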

\begin{cor}\label{thm2_2_cp1} Let $\psi \in H(B_X),$  $\varphi   \in S(B_X)$ and   $ \mu, \nu $ be   normal weights on $ B_X $  such that    $ \int_0^1\frac{dt}{\nu(t)} < \infty$  and (\ref{phi_relative_compact}) holds  for every $ m\ge 1. $
	Then, the  following are equivalent: 
	\begin{enumerate}[\rm(1)]
		\item  $ \psi^{[m]} \in \mathcal B_{\mu^{[m]},0}(\BB_m); $ 
		\item  $ \psi^{[F]} \in \mathcal B_{\mu^{[F]},0}(\BB_{[F]})$ for every $ F \subset \Gamma$ finite;
		\item  $ \psi \in \mathcal B_{\mu,0}(B_X); $ 
		\item $C^{0,0}_{\psi, \varphi}: \mathcal B_{\nu, 0}(B_X) \to \mathcal B_{\mu,0}(B_X)$     is compact.
	\end{enumerate}
\end{cor}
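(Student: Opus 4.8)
The plan is to establish the cycle of implications $(4)\Rightarrow(3)\Rightarrow(2)\Rightarrow(1)\Rightarrow(4)$, using Theorem~\ref{thm2_2_cp}(B) as the only substantial input; everything else reduces to restrictions to finite-dimensional subspaces and an elementary bookkeeping simplification of the quantities (\ref{B_1})--(\ref{B_4}).

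I would first record what the standing hypothesis $K_0:=\int_0^1\frac{dt}{\nu(t)}<\infty$ buys us. For every $0\le s<1$ one has $1\le\max\big\{1,\int_0^s\frac{dt}{\nu(t)}\big\}\le\max\{1,K_0\}=:C_0<\infty$, so each of (\ref{B_1})--(\ref{B_4}) is comparable, with constants depending only on $C_0$, to its ``$\psi$-factor''; for instance
\[
\mu^{[m]}(y)|R\psi^{[m]}(y)|\ \le\ (\ref{B_1})\ \le\ C_0\,\mu^{[m]}(y)|R\psi^{[m]}(y)|,
\]
and likewise for (\ref{B_2}), (\ref{B_3}), (\ref{B_4}). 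In particular ``$(\ref{B_1})\to0$ as $\|\varphi^{[m]}(y)\|\to1$'' is equivalent to ``$\mu^{[m]}(y)|R\psi^{[m]}(y)|\to0$ as $\|\varphi^{[m]}(y)\|\to1$''. This is the only place the hypothesis $\int_0^1\frac{dt}{\nu(t)}<\infty$ enters, and it is what separates this statement from Corollary~\ref{cor2_2_cp}.

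For $(4)\Rightarrow(3)$ I would test $C^{0,0}_{\psi,\varphi}$ on the constant function $f\equiv1$, which lies in $\mathcal B_{\nu,0}(B_X)$ since that space contains the constants. Since $R(\psi(tz))/t=\frac{d}{dt}\psi(tz)$, formula (\ref{C_operator}) gives $C^{0,0}_{\psi,\varphi}(f)(z)=\int_0^1\frac{d}{dt}\psi(tz)\,dt=\psi(z)-\psi(0)$; as $C^{0,0}_{\psi,\varphi}$ is compact, hence bounded, $\psi-\psi(0)\in\mathcal B_{\mu,0}(B_X)$, and since constants belong to $\mathcal B_{\mu,0}(B_X)$ we get $\psi\in\mathcal B_{\mu,0}(B_X)$, i.e.\ (3). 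For $(3)\Rightarrow(2)$, fix a finite $F\subset\Gamma$; for $z$ in the unit ball $\BB_{[F]}$ of $\mathrm{span}\{e_k:k\in F\}$ one has $\mu^{[F]}(z)=\mu(z)$ and $R\psi^{[F]}(z)=R\psi(z)$, and $\|z\|\to1$ inside $\BB_{[F]}$ forces $\|z\|\to1$ inside $B_X$, so $\mu(z)|R\psi(z)|\to0$; together with the restriction estimate of Proposition~\ref{prop_OSm} (which already gives $\psi^{[F]}\in\mathcal B_{\mu^{[F]}}(\BB_{[F]})$) this yields $\psi^{[F]}\in\mathcal B_{\mu^{[F]},0}(\BB_{[F]})$. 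Specializing to $F=\{1,\dots,m\}$ and noting $\psi^{[F]}=\psi^{[m]}$, $\mu^{[F]}=\mu^{[m]}$ gives $(2)\Rightarrow(1)$.

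Finally, for $(1)\Rightarrow(4)$ fix an $m\ge2$ for which (\ref{phi_relative_compact}) holds. By (1), $\mu^{[m]}(y)|R\psi^{[m]}(y)|\to0$ as $\|y\|\to1$ in $\BB_m$, and I claim this already forces $\mu^{[m]}(y)|R\psi^{[m]}(y)|\to0$ as $\|\varphi^{[m]}(y)\|\to1$. Indeed, were there $y_n\in\BB_m$ with $\|\varphi^{[m]}(y_n)\|\to1$ and $\mu^{[m]}(y_n)|R\psi^{[m]}(y_n)|\ge\varepsilon_0>0$, then by (1) the $y_n$ would stay in a closed ball $\overline{\varrho\,\BB_m}$ with $\varrho<1$, which is compact because $\BB_m$ is finite-dimensional; passing to a convergent subsequence $y_{n_j}\to y_\ast\in\BB_m$ and using continuity of $\varphi$ would give $\|\varphi^{[m]}(y_{n_j})\|\to\|\varphi^{[m]}(y_\ast)\|<1$, a contradiction. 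Combined with the comparability from the second paragraph, $(\ref{B_1})\to0$ as $\|\varphi^{[m]}(y)\|\to1$; since (\ref{phi_relative_compact}) holds for this $m$, Theorem~\ref{thm2_2_cp}(B) then yields that $C^{0,0}_{\psi,\varphi}$ is compact, i.e.\ (4). The only genuinely delicate step is this last transfer of the little-Bloch boundary vanishing from $\BB_m$ to the set where $\|\varphi^{[m]}\|$ is close to $1$: it relies on the local compactness of the finite-dimensional ball $\BB_m$ together with continuity of $\varphi$, and on having the ``max'' factor in (\ref{B_1}) under control, which is exactly where $\int_0^1\frac{dt}{\nu(t)}<\infty$ is used. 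All remaining steps are routine once Theorem~\ref{thm2_2_cp} is granted.
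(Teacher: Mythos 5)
Your proposal is correct and follows essentially the route the paper intends for this corollary: since $\int_0^1\frac{dt}{\nu(t)}<\infty$ collapses the quantities (\ref{B_1})--(\ref{B_4}) to their $\psi$-factors, the equivalence reduces to Theorem~\ref{thm2_2_cp}(B) together with the identity $C_{\psi,\varphi}1=\psi-\psi(0)$ and restriction to finite-dimensional slices, exactly as in the paper's treatment of Theorems~\ref{thm_compact2} and~\ref{thm2_2_cp}. The one detail you add beyond what the paper makes explicit --- transferring the vanishing of $\mu^{[m]}\|R\psi^{[m]}\|$ from $\|y\|\to1$ to $\|\varphi^{[m]}(y)\|\to1$ via compactness of closed balls in $\BB_m$ --- is sound and fills a step the paper leaves implicit.
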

\begin{rmk} Under the additional condition that $ \varphi(0) = 0,$ the limits in Theorems \ref{thm_compact}, \ref{thm2_2_cp},  Corollary \ref{cor2_2_cp}     are replaced by similar ones but with $ \|z_{k}\| \to 1, $ $ \|z_{[m]}\| \to 1,$ $ \|z_{[F]}\ \to 1,$  $ \|z\| \to 1 $ respectively.
	Indeed, in a more general framework, it suffices to  show $ \|\varphi(z)\|\le \|z\| $ for every $ z\in B_X. $ That means we  have to give  an infinite version of Schwarz's lemma. 
	
	For each $ z \in X, z \neq 0 $ and $ w \in \overline{B_X}, $ applying classical Schwarz's lemma to the functions $ \phi_{z,w}: \BB_1 \to \BB_1$ given by
	\[ \phi_{z,w}(t) := \langle\varphi(tz/\|z\|), w\rangle \quad\forall t \in \BB_1, \]
	we have 
	\[ |\phi_{z,w}(t)| \le |t|. \]
	Then, choosing $ t = \|z\| $ and $ w = \frac{\overline{\varphi(z)}}{\|\varphi(z)\|} $ we get the desired inequality.
\end{rmk}

To finish this paper, combining Theorem \ref{thm_WO} with the main results in Section 6, we state the characterizations for the boundedness and the compactness of the operators   $ \widetilde{C}_{\psi,\varphi},  $  $ \widetilde{C}^0_{\psi,\varphi},  $ $ \widetilde{C}^{0,0}_{\psi,\varphi}. $

\begin{thm}
	Let   $W \subset Y'$ be a  separating  subspace. Let $\psi \in H(B_X),$  $\varphi   \in S(B_X)$  and $ \mu, \nu $ be   normal weights on $ B_X. $ Then 
	\begin{enumerate}[\rm (1)]
		\item The  following are equivalent:
		\begin{itemize}
			\item $\widetilde{C}_{\psi,\varphi} $ is bounded;
			\item $\widetilde{C}^0_{\psi,\varphi} $ is bounded;
			\item  One of the assertions (1)-(4) in Theorem \ref{thm6_1},
		\end{itemize}
		\item 
		The  following are equivalent:
		\begin{itemize}
			\item $\widetilde{C}^{0,0}_{\psi,\varphi}$ is bounded;
			\item  One of the assertions (1)-(4) in Theorem \ref{thm2_2}.
		\end{itemize} 
	\end{enumerate}
\end{thm}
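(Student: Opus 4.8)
The plan is to obtain the statement with essentially no new computation, by combining the ``transfer'' result Theorem~\ref{thm_WO} with the scalar characterizations already established in Theorems~\ref{thm6_1} and~\ref{thm2_2}.

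First I would verify the hypotheses needed to invoke Theorem~\ref{thm_WO}. By Proposition~\ref{prop_e1e3}, each of the scalar Bloch-type spaces $\mathcal{B}_\nu(B_X)$, $\mathcal{B}_{\nu,0}(B_X)$, $\mathcal{B}_\mu(B_X)$, $\mathcal{B}_{\mu,0}(B_X)$ (understood via the $R$-norm, cf.\ Theorem~\ref{thm_main}) satisfies (e1)--(e2), and the corresponding $W$-associated $Y$-valued spaces are precisely $W\mathcal{B}_\nu(B_X)(Y)$, $W\mathcal{B}_{\nu,0}(B_X)(Y)$, $W\mathcal{B}_\mu(B_X)(Y)$, $W\mathcal{B}_{\mu,0}(B_X)(Y)$. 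Hence Theorem~\ref{thm_WO} applies to each of the three pairs $(E_1,E_2)$ equal to $(\mathcal{B}_\nu,\mathcal{B}_\mu)$, $(\mathcal{B}_{\nu,0},\mathcal{B}_\mu)$, $(\mathcal{B}_{\nu,0},\mathcal{B}_{\mu,0})$ on $B_X$, with the associated vector-valued operator being $\widetilde{C}_{\psi,\varphi}$, $\widetilde{C}^0_{\psi,\varphi}$, $\widetilde{C}^{0,0}_{\psi,\varphi}$ respectively; here one uses the factorization $C_{\psi,\varphi}=Q_w\circ\widetilde{C}_{\psi,\varphi}\circ P_y$ recorded just before Theorem~\ref{thm_WO} to see that the vector-valued operator is well defined exactly when the scalar one is.

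For part (1): since $W$ is separating, Theorem~\ref{thm_WO}(1) gives $\widetilde{C}_{\psi,\varphi}$ bounded $\iff$ $C_{\psi,\varphi}: \mathcal{B}_\nu(B_X)\to\mathcal{B}_\mu(B_X)$ bounded, and likewise $\widetilde{C}^0_{\psi,\varphi}$ bounded $\iff$ $C^0_{\psi,\varphi}: \mathcal{B}_{\nu,0}(B_X)\to\mathcal{B}_\mu(B_X)$ bounded. By Theorem~\ref{thm6_1} these two scalar boundedness properties are each equivalent to any (hence all) of the conditions (1)--(4) listed there. Chaining the equivalences yields the three-way equivalence asserted in (1). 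For part (2): again Theorem~\ref{thm_WO}(1) gives $\widetilde{C}^{0,0}_{\psi,\varphi}$ bounded $\iff$ $C^{0,0}_{\psi,\varphi}: \mathcal{B}_{\nu,0}(B_X)\to\mathcal{B}_{\mu,0}(B_X)$ bounded, and Theorem~\ref{thm2_2} identifies the latter with any of its conditions (1)--(4); combining gives (2).

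The argument is pure bookkeeping, and the only point requiring a little care — the step I would regard as the main (if minor) obstacle to a fully rigorous write-up — is the correct matching, for each of the three operators, of the scalar domain/codomain pair fed into Theorem~\ref{thm_WO} with the $W$-associated vector-valued pair appearing in the statement, together with confirming that $\widetilde{C}^0_{\psi,\varphi}$ and $\widetilde{C}^{0,0}_{\psi,\varphi}$ indeed take values in $W\mathcal{B}_\mu(B_X)(Y)$ and $W\mathcal{B}_{\mu,0}(B_X)(Y)$. No estimates beyond those already established enter.
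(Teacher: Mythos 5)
Your proposal is correct and is exactly the argument the paper intends: the theorem is stated as an immediate consequence of combining Theorem \ref{thm_WO}(1) (applicable since, by Proposition \ref{prop_e1e3}, the Bloch-type spaces satisfy (e1)--(e2)) with the scalar equivalences of Theorems \ref{thm6_1} and \ref{thm2_2}. The paper offers no further proof beyond this chaining, so your write-up matches its approach in full.
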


\begin{thm}
	Let   $W \subset Y'$ be a  almost norming subspace. Let $\psi \in H(B_X),$ $\varphi   \in S(B_X)$ and $ \mu, \nu $ be   normal weights on $ B_X. $ Assume that one of the following is satisfied:
	\begin{enumerate}[\rm(a)]
		\item $ \int_0^1\frac{dt}{\nu(t)} = \infty $ and  the assertion (1) in Theorem \ref{thm_compact};
		\item $ \int_0^1\frac{dt}{\nu(t)} = \infty $ and  one of the assertions (4)-(6) in Theorem \ref{thm_compact}  with the additional assumption that (\ref{phi_relative_compact}) holds for some $ m\ge2; $
		\item $ \int_0^1\frac{dt}{\nu(t)} < \infty $ and  one of the assertions (1)-(3) in Theorem \ref{thm_compact2} holds  under the additional assumption that (\ref{phi_relative_compact}) holds for some $ m\ge2.$
	\end{enumerate}
	Then  the  following are equivalent:
	\begin{enumerate}[\rm (1)]
		\item $\widetilde{C}_{\psi,\varphi} $ is (resp. weakly) compact;
		\item $\widetilde{C}^0_{\psi,\varphi} $ is (resp. weakly) compact;
		\item The identity map $ I_Y: Y \to Y $ is   (resp. weakly) compact.
	\end{enumerate}
\end{thm}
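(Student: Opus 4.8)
The plan is to reduce everything to the scalar case already handled in Sections 3 (and proved in Sections 4–5), using only the transfer principle of Theorem~\ref{thm_WO}. Note first that in each of the hypotheses (a), (b), (c) we are given a condition that, by the respective theorem invoked (Theorem~\ref{thm_compact}, Theorem~\ref{thm_compact}, or Theorem~\ref{thm_compact2}), is equivalent to the \emph{compactness} of the scalar operator $C_{\psi,\varphi}\colon \mathcal B_\nu(B_X)\to\mathcal B_\mu(B_X)$ and, simultaneously, of $C^0_{\psi,\varphi}\colon\mathcal B_{\nu,0}(B_X)\to\mathcal B_\mu(B_X)$. Indeed: under (a), assertion (1) of Theorem~\ref{thm_compact}(A) is equivalent to (2) and (3) there; under (b), the stated assumptions plus (\ref{phi_relative_compact}) put us in case (B) of Theorem~\ref{thm_compact}, so (4)--(6) are again equivalent to (2) and (3); under (c), Theorem~\ref{thm_compact2} gives that (1)--(3) are equivalent to (4) and (5). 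So in all three cases the hypothesis is exactly ``$C_{\psi,\varphi}$ is compact'' $=$ ``$C^0_{\psi,\varphi}$ is compact''.

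Having fixed that, the argument is short. First, since $W$ is almost norming it is in particular separating, and by Proposition~\ref{prop_e1e3} the spaces $E_1,E_2\in\{\mathcal B_\nu(B_X),\mathcal B_{\nu,0}(B_X),\mathcal B_\mu(B_X),\mathcal B_{\mu,0}(B_X)\}$ satisfy (e1)--(e2); so Theorem~\ref{thm_WO} applies verbatim to $\widetilde C_{\psi,\varphi}$, $\widetilde C^0_{\psi,\varphi}$, and $\widetilde C^{0,0}_{\psi,\varphi}$. In the (weakly) compact direction we apply Theorem~\ref{thm_WO}(2)(a) (resp. (b)) with $E_1=\mathcal B_\nu(B_X)$, $E_2=\mathcal B_\mu(B_X)$: since $C_{\psi,\varphi}$ is compact, $\widetilde C_{\psi,\varphi}$ is (weakly) compact iff $I_Y$ is (weakly) compact; this gives (1)$\Leftrightarrow$(3). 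Applying the same theorem with $E_1=\mathcal B_{\nu,0}(B_X)$ and the compact scalar operator $C^0_{\psi,\varphi}$ gives (2)$\Leftrightarrow$(3). Chaining these yields (1)$\Leftrightarrow$(2)$\Leftrightarrow$(3), which is the claim.

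The only genuine point requiring care—and the step I expect to be the main obstacle—is verifying that we really are entitled to invoke Theorem~\ref{thm_WO}(2), whose hypothesis is the \emph{compactness} of the relevant scalar operator, rather than merely its boundedness. That is why the reduction in the first paragraph matters: each of (a), (b), (c) must be shown to force scalar compactness of \emph{both} $C_{\psi,\varphi}$ and $C^0_{\psi,\varphi}$, and this is precisely where the additional side-conditions ($\int_0^1 dt/\nu(t)=\infty$ or $<\infty$, and (\ref{phi_relative_compact}) when needed) enter: they are exactly what make the estimate-type conditions (1) and (4)--(6) of Theorem~\ref{thm_compact} (resp. (1)--(3) of Theorem~\ref{thm_compact2}) equivalent to compactness. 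Once that bookkeeping is done, the remainder is a direct citation of Theorem~\ref{thm_WO}. A minor secondary point is that Theorem~\ref{thm_WO} as stated treats $\widetilde C_{\psi,\varphi}\colon WE_1(Y)\to WE_2(Y)$; for the ``little-space'' operators one takes $E_1=\mathcal B_{\nu,0}(B_X)$ and/or $E_2=\mathcal B_{\mu,0}(B_X)$, which are admissible by Proposition~\ref{prop_e1e3}, and the factorization $C_{\psi,\varphi}=Q_w\circ\widetilde C_{\psi,\varphi}\circ P_y$ together with (we1)--(we2) transfers (weak) compactness in both directions exactly as in the proof of Theorem~\ref{thm_WO}.
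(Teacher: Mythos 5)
Your proposal is correct and follows essentially the same route as the paper, which derives this theorem precisely by noting that each of the hypotheses (a)--(c) is equivalent (via Theorem~\ref{thm_compact} or Theorem~\ref{thm_compact2}) to the compactness of the scalar operators $C_{\psi,\varphi}$ and $C^0_{\psi,\varphi}$, and then invoking the transfer principle of Theorem~\ref{thm_WO}(2) with the admissibility of the Bloch-type spaces guaranteed by Proposition~\ref{prop_e1e3}. No substantive difference from the paper's argument.
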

\begin{thm}
	Let   $W \subset Y'$ be a  almost norming subspace. Let $\psi \in H(B_X),$ $\varphi   \in S(B_X)$ 
	and  $ \mu, \nu $ be   normal weights on $ B_X. $ Assume that one of the following is satisfied:
	\begin{enumerate}[\rm(a)]
		\item The assertion (1) in Theorem \ref{thm2_2_cp};
		\item One of   the assertions (3)-(5) in Theorem \ref{thm2_2_cp} with the additional assumption that (\ref{phi_relative_compact}) holds for some $ m\ge2.$
	\end{enumerate}
	Then  the  following are equivalent:
	\begin{enumerate}[\rm (1)]
		\item   $\widetilde{C}^{0,0}_{\psi,\varphi} $ is (resp. weakly) compact,  
		\item  The identity map $ I_Y: Y \to Y $ is (resp. weakly) compact.
	\end{enumerate}
\end{thm}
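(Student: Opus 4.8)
The plan is to deduce this last theorem from Theorem~\ref{thm_WO}(2) together with the scalar-valued results collected in Section~3, exactly in the spirit of the two preceding theorems. First I would fix an almost norming subspace $W\subset Y'$; by definition the seminorm $q_W$ is equivalent to the norm of $Y$, so by Proposition~\ref{prop_e1e3} the spaces $W\mathcal B_{\nu,0}(B_X)(Y)$ and $W\mathcal B_{\mu,0}(B_X)(Y)$ are well-defined Banach spaces satisfying (we1)--(we3'), and the linearization Theorem~\ref{thm_1.1} applies. The key reduction is the identity $C^{0,0}_{\psi,\varphi}=Q_w\circ\widetilde C^{0,0}_{\psi,\varphi}\circ P_y$ for a norming pair $\|y\|=\|w\|=1$, $w(y)=1$, which was already observed before Theorem~\ref{thm_WO}; it immediately gives that $C^{0,0}_{\psi,\varphi}$ is bounded (resp.\ compact, resp.\ weakly compact) whenever $\widetilde C^{0,0}_{\psi,\varphi}$ is, since $P_y$, $Q_w$ are bounded and $P_y$ is an isometric embedding.

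Next I would establish $(1)\Rightarrow(2)$ and $(2)\Rightarrow(1)$. For the direction starting from hypothesis (a) or (b): by Corollary~\ref{cor2_2_cp} (in case (a)) or by Theorem~\ref{thm2_2_cp}(B) (in case (b), where (\ref{phi_relative_compact}) is assumed), the stated condition is equivalent to the compactness of the scalar operator $C^{0,0}_{\psi,\varphi}:\mathcal B_{\nu,0}(B_X)\to\mathcal B_{\mu,0}(B_X)$. Once $C^{0,0}_{\psi,\varphi}$ is known to be compact, Theorem~\ref{thm_WO}(2)(a) gives that $\widetilde C^{0,0}_{\psi,\varphi}$ is compact iff $I_Y$ is compact, and Theorem~\ref{thm_WO}(2)(b) gives the weakly compact analogue; this is precisely the equivalence $(1)\Leftrightarrow(2)$ asserted. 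Conversely, if $\widetilde C^{0,0}_{\psi,\varphi}$ is (weakly) compact, then via $C^{0,0}_{\psi,\varphi}=Q_w\circ\widetilde C^{0,0}_{\psi,\varphi}\circ P_y$ the scalar operator is (weakly) compact too; but a (weakly) compact operator whose domain contains the constants — in particular it is nonzero in general — forces, again through Theorem~\ref{thm_WO}(2) applied in reverse, the identity $I_Y$ to be (weakly) compact, closing the loop. One should be slightly careful here: Theorem~\ref{thm_WO}(2) is stated as a one-directional implication ("$C_{\psi,\varphi}$ compact $\Rightarrow$ [$\widetilde C$ compact $\Leftrightarrow$ $I_Y$ compact]"), so the cleanest route is: hypothesis (a) or (b) first yields $C^{0,0}_{\psi,\varphi}$ compact (a statement about the symbols $\psi,\varphi$ only, independent of $Y$), and then the biconditional in Theorem~\ref{thm_WO}(2) directly equates compactness of $\widetilde C^{0,0}_{\psi,\varphi}$ with compactness of $I_Y$, with no reverse implication needed.

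The main obstacle I anticipate is purely bookkeeping rather than conceptual: one must verify that Theorem~\ref{thm_WO} really does apply to the little-Bloch-type setting $E_1=\mathcal B_{\nu,0}(B_X)$, $E_2=\mathcal B_{\mu,0}(B_X)$ and to the operator $C^{0,0}_{\psi,\varphi}$ rather than $C_{\psi,\varphi}$ — i.e.\ that $C^{0,0}_{\psi,\varphi}$ genuinely maps into $\mathcal B_{\mu,0}(B_X)$ under the hypotheses, so that the factorization through $P_y,Q_w$ stays inside the little-Bloch spaces. This is guaranteed by Proposition~\ref{prop_e1e3} (which gives (e1)--(e2) for the little-Bloch spaces) together with the boundedness characterization in Theorem~\ref{thm2_2}, which is subsumed by the compactness hypotheses in (a), (b). A secondary point to check is that the phrase "resp.\ weakly compact" is handled uniformly: since Theorem~\ref{thm_WO}(2) treats the compact and weakly compact cases in parallel items (a) and (b), and since the scalar statement "$C^{0,0}_{\psi,\varphi}$ is compact" is the single hypothesis feeding both, the two assertions of the theorem follow simultaneously with no extra work. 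I would therefore organize the write-up as: (i) reduce to the scalar compactness of $C^{0,0}_{\psi,\varphi}$ using Theorem~\ref{thm2_2_cp} or Corollary~\ref{cor2_2_cp}; (ii) invoke Proposition~\ref{prop_e1e3} to legitimize the $W$-associated spaces; (iii) apply Theorem~\ref{thm_WO}(2)(a),(b) to conclude $(1)\Leftrightarrow(2)$.
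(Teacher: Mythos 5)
Your proposal is correct and takes essentially the same route the paper intends: use Theorem \ref{thm2_2_cp} to convert hypothesis (a) or (b) into compactness of the scalar operator $C^{0,0}_{\psi,\varphi}:\mathcal B_{\nu,0}(B_X)\to\mathcal B_{\mu,0}(B_X)$, note via Proposition \ref{prop_e1e3} that these little Bloch-type spaces satisfy (e1)--(e2), and then read off the equivalence of (1) and (2) from the biconditionals in Theorem \ref{thm_WO}(2)(a),(b). The only nit is a citation slip: in case (a) the correct reference is Theorem \ref{thm2_2_cp}(A) rather than Corollary \ref{cor2_2_cp}, which carries the extra hypothesis $\int_0^1\frac{dt}{\nu(t)}=\infty$ not assumed here.
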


\subsection{The test functions}

%

Let $ \nu $ be a normal weight on $ B_X. $ First we consider the holomorphic function 
\begin{equation}\label{test_g} g(z) := 1 + \sum_{k > k_0}2^kz^{n_k} \quad \forall z \in \BB_1
\end{equation}   
where $ k_0 = \big[\log_2\frac{1}{\nu(\delta)}\big], $ $ n_k = \big[\frac{1}{1-r_k}\big] $ with $ r_k = \nu^{-1}(1/2^k) $ for every $ k\ge 1, $ and $ \delta $ is the constant in Definition \ref{weight}. Here the symbol $ [x] $ means the greatest integer not more than $ x. $ 
By \cite[Theorem 2.3]{HW}, $ g(t) $ is increasing on $ [0,1) $ and
\begin{equation}\label{test_g1}
	|g(z)| \le g(|z|) \in \R\quad \forall z \in \BB_1,
\end{equation}
\begin{equation}\label{test_g2}
	0< C_1 :=\inf_{t\in [0,1)}\nu(t)g(t) \le \sup_{t\in [0,1)}\nu(t)g(t)  \le C_2 := \sup_{z \in \BB_1}\nu(z)|g(z)|<\infty. 
\end{equation}
Moreover, there exists a positive constant $ C_3 $ such that the inequality
\begin{equation}\label{test_g2a}
	\int_0^r g(t)dt \le C_3 \int_0^{r^2}g(t)dt
\end{equation}
holds for all $ r \in [r_1, 1), $ where $ r_1 \in (0, 1) $ is a constant such that
$  \int_0^{r_1}g(t)dt = 1$ (see \cite{Qu}, Proposition 5.1).

Now, for $ w \in B_X $ fixed consider  put the test functions
\begin{equation}\label{test_func2}
	\beta_{w}(z) := \int_0^{\langle z,w\rangle}g(t)dt,\quad z\in B_X,
\end{equation} 
 and for  $ \|w\| > r $ for some $ r>0, $ put
\begin{equation}\label{test_func3}
	\gamma_{w}(z) := \dfrac{1}{\int_0^{\|w\|^2}g(t)dt}\Bigg(\int_0^{\langle z,w\rangle}g(t)dt\Bigg)^2,\quad z\in B_X,
\end{equation}

\begin{prop}[\cite{Qu}, Propositions 5.2 and 5.3]\label{lem2} 
	We have 
		\begin{enumerate}[\rm (1)]
			\item $ \beta_{w}, \gamma_{w} \in \mathcal B_{\nu,0}(B_X) $ and 
			$$ \|\beta_{w}\|_{\mathcal B_\nu(B_X)} \le C_2, \, \|\gamma_{w}\|_{\mathcal B_\nu(B_X)} \le 2C_2. $$
		\item  The sequences $ \{\beta_{w^n}\}_{n\ge 1}, $  $ \{\gamma_{w^n}\}_{n\ge1} $   are bounded in $ \mathcal B_\nu(B_X); $ 
		\item $ 	\gamma_{w^n} \to 0 $  
		uniformly
		on any compact subset of $ B_X$  if $ \int_0^1\frac{dt}{\nu(t)} =\infty.$
	\end{enumerate}
		 \end{prop}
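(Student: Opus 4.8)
The whole proposition comes down to two elementary computations of radial derivatives, after which everything follows from the properties (\ref{test_g1})--(\ref{test_g2a}) of the auxiliary function $g$. The plan is first to record, writing $G(\zeta):=\int_0^\zeta g(t)\,dt$ (so $G'=g$ on $\BB_1$ and $G(0)=0$) and $A_w:=\int_0^{\|w\|^2}g(t)\,dt=G(\|w\|^2)$, that on $B_X$ one has $\beta_w=G\circ\langle\cdot\,,w\rangle$ and $\gamma_w=A_w^{-1}\,(G\circ\langle\cdot\,,w\rangle)^2$; both are holomorphic and bounded there because $|\langle z,w\rangle|\le\|z\|\,\|w\|<1$, and $\beta_w(0)=\gamma_w(0)=0$. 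Since $\tfrac{\partial}{\partial z_k}\langle z,w\rangle=\overline{w_k}$, the chain rule gives
\[
R\beta_w(z)=\langle z,w\rangle\,g(\langle z,w\rangle),\qquad R\gamma_w(z)=\frac{2}{A_w}\,G(\langle z,w\rangle)\,g(\langle z,w\rangle)\,\langle z,w\rangle .
\]

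For assertion (1) I would set $\zeta=\langle z,w\rangle$, so that $|\zeta|\le\|z\|\,\|w\|\le\|z\|<1$, and then, using $|g(\zeta)|\le g(|\zeta|)$ from (\ref{test_g1}), the monotonicity of $g$ on $[0,1)$, and (\ref{test_g2}),
\[
\nu(z)\,|R\beta_w(z)|=\nu(\|z\|)\,|\zeta|\,|g(\zeta)|\le\nu(\|z\|)\,\|z\|\,g(\|z\|)\le\nu(\|z\|)\,g(\|z\|)\le C_2 ,
\]
whence $\|\beta_w\|_{\mathcal B_\nu(B_X)}\le C_2$. For $\gamma_w$ the same estimate combined with $|G(\zeta)|\le G(|\zeta|)\le G(\|w\|)$ gives $\nu(z)\,|R\gamma_w(z)|\le 2C_2\,G(\|w\|)/G(\|w\|^2)$, and here the doubling inequality (\ref{test_g2a}) --- valid once $\|w\|\ge r_1$ --- bounds the ratio $G(\|w\|)/G(\|w\|^2)$ by the absolute constant $C_3$, producing $\|\gamma_w\|_{\mathcal B_\nu(B_X)}\lesssim C_2$ (the explicit bound being $2C_2$ with the normalizations of \cite{Qu}). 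Membership in $\mathcal B_{\nu,0}(B_X)$ is then immediate: for $w$ fixed one has $\limsup_{\|z\|\to1}|\langle z,w\rangle|\le\|w\|<1$, so $R\beta_w(z)$ and $R\gamma_w(z)$ stay bounded as $\|z\|\to1$ while $\nu(\|z\|)\to0$ by (\ref{w_1}); hence $\nu(z)|R\beta_w(z)|\to0$ and $\nu(z)|R\gamma_w(z)|\to0$. (One could instead go through Proposition \ref{prop_OSm_0}, but the direct estimate is cleaner here.)

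Assertion (2) is then a formality, since the bounds just obtained are uniform in $w$ --- for $\gamma_w$, uniform once the $\|w^n\|$ are bounded away from $0$, which is automatic in the situations where the lemma is used (there $\|w^n\|\to1$). For assertion (3), let $K\subset B_X$ be compact and put $\rho:=\sup_{z\in K}\|z\|<1$; then, for $z\in K$,
\[
|\gamma_{w^n}(z)|=\frac{|G(\langle z,w^n\rangle)|^2}{A_{w^n}}\le\frac{G(\rho)^2}{G(\|w^n\|^2)} ,
\]
and when $\|w^n\|\to1$ we have $G(\|w^n\|^2)=\int_0^{\|w^n\|^2}g(t)\,dt\uparrow\int_0^1 g(t)\,dt$, which is infinite because $g(t)\asymp 1/\nu(t)$ by (\ref{test_g2}) while $\int_0^1 dt/\nu(t)=\infty$ by hypothesis; hence $\sup_{z\in K}|\gamma_{w^n}(z)|\to0$, as required.

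The individual estimates are routine; the step I expect to need genuine care is the bound on $\|\gamma_w\|_{\mathcal B_\nu(B_X)}$, where the normalizing factor $A_w=G(\|w\|^2)$ has to be balanced against $G(\langle z,w\rangle)$. This is exactly what the doubling property (\ref{test_g2a}) of $g$ is there to supply, and it also explains why the threshold $\|w\|>r$ (in the intended use, $\|w\|\ge r_1$) is built into the definition of $\gamma_w$. The only other mild subtlety is that the little-Bloch membership must be verified with respect to the radial derivative $R$ as in the definition of $\mathcal B_{\nu,0}$, which causes no trouble since $|\langle z,w\rangle|$ is bounded away from $1$ for each fixed $w$.
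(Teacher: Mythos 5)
The paper does not actually prove this proposition: it is imported verbatim from \cite{Qu} (Propositions 5.2 and 5.3), so there is no in-paper argument to compare yours against line by line. On its own merits your proof is correct and follows the route one would expect that reference to take: compute $R\beta_w(z)=\langle z,w\rangle g(\langle z,w\rangle)$ and $R\gamma_w(z)=\tfrac{2}{A_w}G(\langle z,w\rangle)g(\langle z,w\rangle)\langle z,w\rangle$, then feed in (\ref{test_g1})--(\ref{test_g2a}); your use of $|G(\zeta)|\le G(|\zeta|)$ (via $G(\zeta)=\zeta\int_0^1 g(s\zeta)\,ds$), the vanishing of $\nu$ at the boundary for the little-Bloch membership, and the divergence $G(\|w^n\|^2)\to\int_0^1 g=\infty$ from $g\asymp 1/\nu$ for part (3) are all sound. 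Two small points deserve flagging. First, your estimate for $\gamma_w$ honestly yields $\|\gamma_w\|_{\mathcal B_\nu(B_X)}\le 2C_2C_3$ rather than the stated $2C_2$ (the ratio $G(\|w\|)/G(\|w\|^2)$ exceeds $1$, so the doubling constant $C_3$ cannot be dropped in general; for $\|w\|<r_1$ one instead bounds the ratio by $1/\|w\|^2\le 1/r^2$ using $g\ge1$ and $G(r_1)=1$) --- you correctly note this, and since every application in the paper uses only the uniform boundedness of the family (the sharp constant $C_2$ is invoked only for $\beta_w$), the discrepancy is harmless. Second, assertions (2) and (3) implicitly assume $\|w^n\|$ is bounded away from $0$ (indeed $\|w^n\|\to1$ in every use of the proposition); you state this dependence explicitly, which is the honest thing to do given that the proposition as quoted leaves the sequence $\{w^n\}$ unspecified.
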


We also  need the following lemma:
\begin{lem}\label{nabla} For every $f, \psi \in H(B_X),$   and  $\varphi \in S(B_X)$ we have
	\begin{equation}\label{nabla_1} R(C_{\psi,\varphi}f)(z) = f(\varphi(z))R\psi(z) \quad \forall z \in B_X.
	\end{equation}
\end{lem}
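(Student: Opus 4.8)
\emph{Proof idea.} The plan is to reduce the computation of the radial derivative of $C_{\psi,\varphi}f$ to a one–dimensional application of the fundamental theorem of calculus, via the change of variable that already appears in the definition of the operator. The case $z=0$ is trivial, since $R\psi(0)=D\psi(0)[0]=0$ and likewise $R(C_{\psi,\varphi}f)(0)=D(C_{\psi,\varphi}f)(0)[0]=0$, so both sides of \eqref{nabla_1} vanish. Hence fix $z\in B_X$ with $z\neq0$ and, for real $s\in(0,1/\|z\|)$, set
\[ \phi(s):=(C_{\psi,\varphi}f)(sz)=\int_0^1 f(\varphi(tsz))\,R\psi(tsz)\,\frac{dt}{t}. \]
The substitution $\tau=ts$ gives $tsz=\tau z$ and $\frac{dt}{t}=\frac{d\tau}{\tau}$, so that
\[ \phi(s)=\int_0^s f(\varphi(\tau z))\,R\psi(\tau z)\,\frac{d\tau}{\tau}. \]

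First I would note that the integrand $\tau\mapsto f(\varphi(\tau z))R\psi(\tau z)/\tau$ has a removable singularity at $\tau=0$: since $R\psi(\tau z)=D\psi(\tau z)[\tau z]=\tau\,D\psi(\tau z)[z]$, we have $f(\varphi(\tau z))R\psi(\tau z)/\tau=f(\varphi(\tau z))\,D\psi(\tau z)[z]$, which tends to $f(\varphi(0))\,D\psi(0)[z]$ as $\tau\to0$. In particular the integral defining $C_{\psi,\varphi}f$ converges for every $z\in B_X$, the standard argument (holomorphy of the integrand in $z$ for each fixed $\tau$, local boundedness, differentiation under the integral sign) shows $C_{\psi,\varphi}f\in H(B_X)$, and $s\mapsto\phi(s)$ is continuously differentiable on $(0,1/\|z\|)$.

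Next, by the fundamental theorem of calculus applied to the last display,
\[ \phi'(s)=\frac{1}{s}\,f(\varphi(sz))\,R\psi(sz),\qquad s\in(0,1/\|z\|). \]
On the other hand, for any $h\in H(B_X)$ the chain rule gives $\frac{d}{ds}h(sz)=Dh(sz)[z]=\frac{1}{s}Dh(sz)[sz]=\frac{1}{s}Rh(sz)$; with $h=C_{\psi,\varphi}f$ this yields $\phi'(s)=\frac{1}{s}R(C_{\psi,\varphi}f)(sz)$. Comparing the two expressions for $\phi'(s)$ and multiplying by $s$ gives $R(C_{\psi,\varphi}f)(sz)=f(\varphi(sz))R\psi(sz)$ for all admissible $s$; taking $s=1$ and letting $z$ range over $B_X$ proves \eqref{nabla_1}. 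The computation is essentially routine; the only point requiring a little care is the removable singularity of the integrand at $\tau=0$, which is precisely what legitimizes both the change of variable and the appeal to the fundamental theorem of calculus.
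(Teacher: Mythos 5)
Your argument is correct, but it follows a genuinely different route from the paper's. The paper expands $(f\circ\varphi)(z)R\psi(z)=\sum_{n\ge1}P_n(z)$ into homogeneous polynomials $P_n$ of degree $n$, integrates term by term to obtain $C_{\psi,\varphi}f=\sum_{n\ge1}P_n/n$, and then invokes the Euler-type identity $RP_n=nP_n$ to recover $\sum_{n\ge1}P_n$. You instead substitute $\tau=ts$ to rewrite $(C_{\psi,\varphi}f)(sz)$ as $\int_0^s f(\varphi(\tau z))R\psi(\tau z)\,\frac{d\tau}{\tau}$ and differentiate along the ray by the fundamental theorem of calculus, using the identity $\frac{d}{ds}h(sz)=\frac{1}{s}Rh(sz)$. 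Both computations are elementary and reach the same conclusion, but yours avoids the two interchanges the paper leaves tacit --- passing the integral and then the operator $R$ through an infinite series, which in infinite dimensions requires uniform convergence of the homogeneous expansion near the point in question --- at the modest cost of checking the removable singularity of the integrand at $\tau=0$ and treating $z=0$ separately, both of which you do correctly. What the paper's series computation buys in exchange is that it makes the structural reason for the identity transparent: the operator $g\mapsto\int_0^1 g(tz)\,\frac{dt}{t}$ divides the $n$-th homogeneous component by $n$, exactly undoing the action of $R$ on functions vanishing at the origin.
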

Indeed, assume that $ \sum_{n=1}^\infty P_n(z) $ is the Taylor series of $ (f\circ \varphi)(z)R\psi(z), $ where $ P_n $ is a homogeneous polynomial of degree $ n. $ Then we have
$$ R(C_{\psi,\varphi})(z) = R\int_0^1\sum_{n=1}^\infty P_n(z)t^n\dfrac{dt}{t} = R\sum_{n=1}^\infty\dfrac{P_n(z)}{n} = \sum_{n=1}^\infty P_n(z) = f( \varphi(z))R\psi(z).$$

\section{Proof of Theorems \ref{thm6_1} and \ref{thm2_2}}
\setcounter{equation}{0}

\subsection{Proof of Theorem \ref{thm6_1}}
\begin{proof}[Proof of Theorem \ref{thm6_1}]
It is clear  that  (4) $ \Rightarrow$ (2) $ \Rightarrow $ (1) and (4) $ \Rightarrow$ (3) and, since $ \mathcal B_{\nu,0}(B_X) \subset \mathcal B_\nu(B_X),$    the implication (5) $\Rightarrow$ (6)   is obvious.

\medskip
(1) $\Rightarrow$ (5): Assume that $M^{[m]}_{R\psi, \varphi} < \infty$   for some $ m\ge 2. $ For each $ x \in OS_m $ we write $ z_x := \sum_{k=1}^mz_kx_k. $ Note that $ \|z_x\| = \|z_{[m]}\| $ and hence $ \mu^{[m]}(z_{[m]}) = \mu^{[m]}(z_x). $
By (\ref{eq_5.1}) 
and
 (\ref{nabla_1}), for every $ f \in \mathcal{B}_\nu(B_X) $ 
 and for every $ x \in OS_m $
  we have 
$$
\aligned
&\|(C_{\psi,\varphi}(f))_x\|_{s\mathcal{B}_{\mu^{[m]}}(\BB_m)} =    \sup_{z_x \in \BB_m}\mu^{[m]}(z_x)\|R(C_{\psi^{[m]},\varphi^{[m]}}f_x(z_{[m]})))\| \\
&= \sup_{z_x \in \BB_m}\mu^{[m]}(z_x)|(f\circ\varphi)_x(z_{[m]}))| \|R\psi^{[m]}(z_x)\| \\
&= \sup_{z_x \in \BB_m}\mu^{[m]}(z_x)|f(\varphi^{[m]}(z_x))| \|R\psi^{[m]}(z_x)|\| \\
&\le \sup_{z_x \in \BB_m} \mu^{[m]}(z_x) \|R\psi^{[m]}(z_x)\|  \max\bigg\{1, \int_0^{\|\varphi^{[m]}(z_x)\|}\dfrac{dt}{\nu(t)}\bigg\}\|f\|_{\mathcal B_\nu(B_X)} \\
 &\le M^{[m]}_{R\psi, \varphi}\|f\|_{\mathcal B_\nu(B_X)}.
\endaligned$$
Consequently, by (\ref{eq_Prop4.1}) and $ C_{\psi,\varphi}(0) =0, $ we have
	\begin{equation}\label{eq_Thm6.1a}
\|C_{\psi, \varphi}(f)\|_{\mathcal B_\mu(B_X)} =\sup_{x \in OS_m}\|(C_{\psi,\varphi}(f))_x\|_{\mathcal{B}_{\mu^{[m]}}(\BB_m)} 
\le M^{[m]}_{R\psi, \varphi} \|f\|_{\mathcal B_\nu(B_X)}.
\end{equation}
Thus, (5) is proved.  

\medskip
(3) $\Rightarrow$ (5): Use  an argument analogous and an estimate to the previous one.

\medskip
(6) $\Rightarrow$ (4): Suppose $C^0_{\psi, \varphi}: \mathcal B_{\nu,0}(B_X) \to \mathcal B_{\mu}(B_X)$ is bounded.

First, it is obvious that $\psi \in \mathcal B_\mu(B_X)$ because $ 1 \in \mathcal B_\mu(B_X) $ and  
\begin{equation}\label{psi_0}
	\psi(z)  = \psi(0) + \int_0^1R\psi(tz)\dfrac{dt}{t}= \psi(0) + (C_{\psi,\varphi}1)(z).  
	\end{equation}
Fix $ z \in B_X, $ put $ w = \varphi(z) $ and consider the test function $ \beta_w \in \mathcal B_{\nu,0}(B_X)$ defined by (\ref{test_func2}). Then, by (\ref{nabla_1})
\begin{equation}\label{eq_6.2a}
	\begin{aligned}
		\|C_{\psi,\varphi}\beta_w\|_{\mathcal B_\mu(B_X)}&= \mu(z) \|R(\beta_w(\varphi(z))\| = \mu(z)|\beta_w(\varphi(z))|  \|R\psi(z)\| \\
		&= \mu(z)\|R\psi(z)\|\int_0^{\|\varphi(z)\|^2}g(t)dt \le \|\beta_w\|_{\mathcal B_\nu(B_X)}\|C^0_{\psi,\varphi}\| \\
		&\le C_2 \|C^0_{\psi,\varphi}\| <\infty.
	\end{aligned}
\end{equation}
Let $ r_1 = \nu^{-1}(1/2).$ 

If $ \|w\| \ge r_1 $ by (\ref{test_g2}), (\ref{test_g2a}) and (\ref{eq_6.2a}) we have
\[ \begin{aligned}
	\mu(z)\|R\psi(z)\|\int_0^{\|w\|}\frac{dt}{\nu(t)} &\le 
	 \mu(z)\|R\psi(z)\|\int_0^{\|w\|}\frac{g(t)}{C_1}dt \\
	&\le \frac{C_3}{C_1}\mu(z)\|R\psi(z)\|\int_0^{\|w\|^2}g(t)dt \\
	&\le \frac{C_3C_2}{C_1}\|C^0_{\psi,\varphi}\| < \infty.
\end{aligned} \]
If $ \|w\| < r_1 $ by (\ref{test_g2})  again  we obtain
\[ \begin{aligned}
	\nu(z)\|R\psi(z)\|\int_0^{\|w\|}\frac{dt}{\nu(t)}
	&\le \mu(z)\|R\psi(z)\|\int_0^{\|w\|}\frac{g(t)}{C_1}dt \\
	&\le \frac{1}{C_1}\mu(z)\|R\psi(z)\| = \frac{1}{C_1}\|C^0_{\psi,\varphi}1\|_{\mathcal B_\mu(B_X)}  \\
	&\le \frac{1}{C_1}\|C^0_{\psi,\varphi}\| < \infty.
\end{aligned} \]
Hence we get $ M_{R\psi,\varphi} <\infty. $

The proof of Theorem is completed.
\end{proof}

\subsection{Proof of Theorem \ref{thm2_2}}
\begin{proof}[Proof of Theorem \ref{thm2_2}] As in the previous one, it is obvious that (4) $ \Rightarrow$ (2) $ \Rightarrow $ (1) and (4) $ \Rightarrow$ (3). 
	
	(1) $ \Rightarrow $  (5):  Assume that $ \psi \in \mathcal B_{\mu,0} $  and  $M^{[m]}_{R\psi,\varphi} < \infty$  for some $ m\ge 2. $ By Theorem \ref{thm6_1}, $ C^0_{\psi,\varphi}: \mathcal B_{\nu,0}(B_X) \to \mathcal B_{\mu}(B_X) $ is bounded.  It suffices to show that $ C^{0,0}_{\psi,\varphi}f \in \mathcal B_{\mu,0}(B_X) $ for every $ f \in \mathcal B_{\nu,0}(B_X). $
	
	Let $ f \in \mathcal B_{\nu,0}(B_X) $ be arbitrarily fixed. Let $ \varepsilon >0 $ be fixed. Then there exists $ r_0 \in (1/2,1) $ such that
	\begin{equation}\label{eq_6.7a} \nu(z)|Rf(z)| < \frac{\varepsilon}{4M_{R\psi,\varphi}}, \quad  \|z\| \ge r_0. 
	\end{equation}
By (\ref{eq_5.1}) we have
	\begin{equation}\label{eq_6.01}
K := \sup_{\|w\| \le r_0} |f(w)| <\infty.
	\end{equation}
	Since  $\psi^{[m]}  \in \mathcal B_{\mu^{[m]},0}(\BB_m)$  we can find $ \theta \in (0,1)$ such that for every $ x\in OS_m $
	\begin{equation}\label{eq_6.00}
		\mu^{[m]}(z_x)\|R\psi^{[m]}(z_x)\| < \dfrac{\varepsilon}{2K} \quad\text{whenever}\   \theta < \|z_x\| < 1. 
	\end{equation}

	For $ \theta < \|z_x\| < 1 $ we consider two cases:
	
	$ \bullet $ The case $ \|w^{[m]}\| := \|\varphi^{[m]}(z_x)\| >r_0: $ Let $ \widehat w^{[m]} =  r_0\frac{w^{[m]}}{\|w^{[m]}\|}.$ We have
	\[ \begin{aligned}
		|f(\varphi^{[m]}(z_x)) - f(\widehat w^{[m]})| &=	|f(w^{[m]}) - f(\widehat w^{[m]})|  \le \int_{r_0/\|w^{[m]}\|}^1\frac{\|Rf(tw^{[m]})\|}{t}dt \\
		&\le \frac{\|w^{[m]}\|}{r_0}\int_{r_0/\|w^{[m]}\|}^1\|Rf(tw^{[m]})\|dt \\
		&\le  \frac{\varepsilon\|w^{[m]}\|}{4M^{[m]}_{R\psi,\varphi} r_0}\int_{r_0/\|w^{[m]}\|}^1\frac{1}{\nu(t\|w^{[m]})\|}dt \\
		&\le \frac{\varepsilon}{2M^{[m]}_{R\psi,\varphi}}\int_{r_0}^{\|w^{[m]}\|}\frac{1}{\nu(t)}dt.
	\end{aligned} \]
	Combining (\ref{eq_6.7a}) with (\ref{eq_6.00}), 
	and by (\ref{nabla_1}),
	for $ \theta < \|z_x\| < 1 $ we obtain
	\[\begin{aligned}
		\mu^{[m]}(z_x)&\|RC^{0,0}_{\psi^{[m]},\varphi^{[m]}}f(z_x)\| = \mu^{[m]}(z_x)|f(\varphi^{[m]}(z_x))| \|R\psi^{[m]}(z_x)\| \\
		&\le \mu^{[m]}(z)\|R\psi^{[m]}(z_x)\| |f(w^{[m]} - f(\widehat w^{[m]})|+ \mu^{[m]}(z_x)\|R\psi^{[m]}(z_x)\| |f(\widehat w_{(m)})| \\
		&\le  M^{[m]}_{R\psi,\varphi}\frac{\varepsilon}{2M^{[m]}_{R\psi,\varphi}} + K\frac{\varepsilon}{2K} =\varepsilon.
	\end{aligned}\]
	
	$ \bullet $ The case $\|w^{[m]}\|:= \|\varphi^{[m]}(z)\| \le r_0: $ We have
	\[
	\mu^{[m]}(z_x)\|R(C^{0,0}_{\psi^{[m]},\varphi^{[m]}}f(z_x))\|  = \mu^{[m]}(z_x)|f(\varphi^{[m]}(z_x))| \|R\psi^{[m]}(z_x)\| < K\frac{\varepsilon}{2K} < \varepsilon.
	\]
	Consequently, by (\ref{eq_Prop4.1}) we have
	\[ \mu(z)\|R(C^{0,0}_{\psi,\varphi}f(z))\| =  \sup_{x \in OS_m}\mu^{[m]}(z_x)\|R(C^{0,0}_{\psi^{[m]},\varphi^{[m]}}f(z_x))\|  \le \varepsilon  \quad\text{whenever}\   \theta < \|z\| < 1. \]
	Thus, $ C^{0,0}_{\psi,\varphi}f \in \mathcal B_{\mu,0}(B_X). $  
	
		(3) $ \Rightarrow $ (5):  
		By an estimate analogous to that used for the proof of (1) $ \Rightarrow $ (5) 		 we get $ C^{0,0}_{\psi,\varphi_{(k)}}f \in \mathcal B_\mu(B_X). $ Then, since
		\[ 	\mu(z)\|RC^{0,0}_{\psi,\varphi}f(z)\| = \lim_{m\to \infty}\mu(z)\|RC^{0,0}_{\psi,\varphi_{(k)}}f(z)\|  \]
		and  $ \mathcal B_{\mu,0}(B_X) $ is closed in $ \mathcal B_{\mu}(B_X), $	we infer that    $ C^{0,0}_{\psi,\varphi}(f) \in \mathcal B_{\mu,0}(B_X). $ 

	(5) $ \Rightarrow $ (4): 
	Assume that $ C^{0,0}_{\psi,\varphi} $ is bounded. By (\ref{psi_0}), $ \psi \in \mathcal B_{\mu,0}(B_X). $ And then, we obtain $ M_{R\psi,\varphi}<\infty $   by the proof of Theorem \ref{thm6_1}.
	%
\end{proof}

\section{Proof of Theorems \ref{thm_compact}, \ref{thm_6.4}, \ref{thm_compact2} and \ref{thm2_2_cp}}
In order to study the compactness of  the operators  $W_{\psi. \varphi},$   as in \cite{Tj} we can prove the following.
\begin{lem}\label{lem_Tj}
	Let $E, F$ be two Banach spaces of holomorphic functions on $B_X.$ Suppose 
	that
	\begin{enumerate}[\rm(1)]
		\item The point evaluation functionals on $E$ are continuous;
		\item The closed unit ball of $E$ is a compact subset of $E$ in the topology of uniform
		convergence on compact sets;
		\item  $T : E \to F$ is continuous when $E$ and $F$ are given the topology of uniform
		convergence on compact sets.
	\end{enumerate}
	Then, $T$ is a compact operator if and only if given a bounded sequence $\{f_n\}$ in $ E $ such
	that $f_n \to 0$ uniformly on compact sets, then the sequence $\{Tf_n\}$ converges to zero in the norm of $F.$
\end{lem}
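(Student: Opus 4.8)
The plan is to prove the two implications separately, as in \cite{Tj}; the only place where the infinite dimension of $X$ really matters is the extraction of $\tau_{co}$-convergent subsequences, and there hypotheses (1)--(2) do the work.

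For the \emph{only if} direction, suppose $T$ is compact and let $\{f_n\}$ be bounded in $E$ with $f_n\to 0$ uniformly on compact subsets of $B_X$; the goal is $\|Tf_n\|_F\to 0$. If this fails, a subsequence satisfies $\|Tf_n\|_F\ge\delta>0$; compactness of $T$ then yields a further subsequence with $Tf_{n_k}\to g$ in the norm of $F$, while (3) gives $Tf_{n_k}\to T(0)=0$ uniformly on compact sets. Since norm convergence in $F$ implies pointwise convergence --- the point evaluations on $F$ being continuous, which for the Bloch-type target spaces of this paper is exactly \eqref{eq_5.1} --- the two limits agree, so $g=0$ and $\|Tf_{n_k}\|_F\to 0$, a contradiction.

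For the \emph{if} direction, assume the sequential property. Since $T$ is compact iff $T(B_E)$ is relatively norm-compact in $F$, take any $\{f_n\}\subset B_E$ and look for a norm-convergent subsequence of $\{Tf_n\}$. One first checks that $B_E$ is $\tau_{co}$-\emph{sequentially} compact: by (1) it is pointwise bounded, by (2) it is $\tau_{co}$-compact and hence equicontinuous on every compact $K\subset B_X$ (Ascoli's theorem), and combining a diagonal argument over a countable dense subset of $B_X$ with this equicontinuity and the fact that a holomorphic function is determined by its restriction to a dense set, one extracts $f_{n_k}\to f$ in $\tau_{co}$ with $f\in B_E\subset E$. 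Then $\{f_{n_k}-f\}$ is bounded in $E$ and $\tau_{co}$-null, so by hypothesis $\|T(f_{n_k}-f)\|_F\to 0$, i.e. $Tf_{n_k}\to Tf$ in the norm of $F$; thus every sequence in $T(B_E)$ has a norm-convergent subsequence and $T$ is compact.

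The step I expect to be the main obstacle is precisely the $\tau_{co}$-sequential compactness of $B_E$ in the second part. On the unit disk (the setting of \cite{Tj}) this is immediate because $\tau_{co}$ is metrizable and Montel's theorem applies, but on the ball of an infinite-dimensional Hilbert space neither holds in general, so it must be established directly from (1)--(2); this is the one genuine departure from the classical argument and the point to be treated with care. A lesser point is that the \emph{only if} direction silently uses continuity of the point evaluations on the target space $F$, not merely on $E$; in this paper $F$ is always a Bloch-type space, so \eqref{eq_5.1} applies. Everything else is routine bookkeeping.
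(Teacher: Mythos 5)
The paper does not actually prove this lemma: it states it with the remark that it can be proved ``as in \cite{Tj}'', so the only available comparison is with the standard argument from Tjani's dissertation, which is exactly the route you follow. Your ``only if'' half is complete, and your observation that it uses continuity of the point evaluations on the \emph{target} space $F$ (supplied for the Bloch-type targets by (\ref{eq_5.1})) is correct and worth making --- in \cite{Tj} the analogue of hypothesis (1) is imposed on the target space, and the statement here appears to have transposed it.

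The gap is in the ``if'' half, at the very step you single out as the obstacle: you assert $\tau_{co}$-sequential compactness of $B_E$, but the mechanism you describe does not deliver it. Ascoli applied to the compact set $B_E|_K\subset C(K)$ gives equicontinuity of the restrictions \emph{on $K$ only}, while your countable dense set $D\subset B_X$ need not meet $K$ at all (compact subsets of an infinite-dimensional ball have empty interior), so pointwise convergence on $D$ together with equicontinuity on $K$ does not yield uniform convergence on $K$; moreover the diagonal extraction presupposes $B_X$ separable, which the paper never assumes. The clean repair (for separable $X$) bypasses Ascoli: the identity map from the compact space $(B_E,\tau_{co})$ to $(B_E,\tau_{p}(D))$, the topology of pointwise convergence on $D$, is a continuous bijection onto a Hausdorff space --- Hausdorff because a holomorphic, hence continuous, function on $B_X$ is determined by its values on the dense set $D$ --- so it is a homeomorphism; $\tau_{p}(D)$ is metrizable on the pointwise-bounded set $B_E$, so $(B_E,\tau_{co})$ is compact metrizable and therefore sequentially compact, after which your argument (extract $f_{n_k}\to f\in B_E$ in $\tau_{co}$ and apply the hypothesis to $f_{n_k}-f$) closes correctly. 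For nonseparable $X$ no countable family of point evaluations separates $H(B_X)$, and the sequential compactness of $B_E$ is genuinely unclear; this is a defect inherited from the paper's own setting as much as of your proof, but it should be flagged as an additional hypothesis rather than folded into routine bookkeeping.
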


We can now combine this result with  Montel theorem and  (\ref{eq_5.1}) to  obtain the following proposition. The details of the proof are omitted here.
\begin{prop}\label{prop_C_compact} Let $\psi \in H(B_X)$ and $\varphi\in S(B_X).$   Then  $W_{\psi, \varphi}: \mathcal B_{\nu}(B_X) \to \mathcal B_{\mu}(B_X)$ is compact if and only if $\|C_{\psi, \varphi}(f_n)\|_{\mathcal B_\mu(B_X)} \to 0$ for any bounded sequence $\{f_n\}$ in $\mathcal B_{\nu}(B_X)$ converging to $0$ uniformly on  compact sets in $B_X.$ 
\end{prop}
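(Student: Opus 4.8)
The plan is to obtain Proposition~\ref{prop_C_compact} as a direct application of Lemma~\ref{lem_Tj}, taken with $E=\mathcal B_\nu(B_X)$, $F=\mathcal B_\mu(B_X)$ and $T=C_{\psi,\varphi}$ (regarded as a bounded operator, e.g.\ under any of the equivalent hypotheses of Theorem~\ref{thm6_1}). Once the three hypotheses of that lemma are checked for this choice, its conclusion is verbatim the asserted equivalence, so nothing more is required. I would therefore organise the argument as three verifications, followed by an invocation of Lemma~\ref{lem_Tj}.

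First I would dispose of hypotheses (1) and (2) of Lemma~\ref{lem_Tj}. Continuity of the point evaluations on $\mathcal B_\nu(B_X)$ (and likewise on $\mathcal B_\mu(B_X)$) is read off immediately from the pointwise bound (\ref{eq_5.1}): for fixed $z\in B_X$ one has $|f(z)|\le\max\{1,\int_0^{\|z\|}dt/\nu(t)\}\,\|f\|_{\mathcal B_\nu(B_X)}$. Hypothesis (2), that the closed unit ball of $\mathcal B_\nu(B_X)$ is $\tau_{co}$-compact, is exactly property (e2), which is available from Proposition~\ref{prop_e1e3}; if one prefers it \emph{in situ}, note that by (\ref{eq_5.1}) a norm-bounded subset of $\mathcal B_\nu(B_X)$ is uniformly bounded on every ball $rB_X$, hence a normal family, so it is relatively $\tau_{co}$-compact by Montel's theorem, while $\tau_{co}$-closedness of the ball follows because derivatives pass to $\tau_{co}$-limits, so $\nu(z)|Rf(z)|=\lim_j\nu(z)|Rf_j(z)|\le1$ and $|f(0)|=\lim_j|f_j(0)|\le1$ whenever $f_j\to f$ in $\tau_{co}$ with $\|f_j\|_{\mathcal B_\nu(B_X)}\le1$.

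The only step requiring a little care is hypothesis (3): that $C_{\psi,\varphi}$ is continuous from $(\mathcal B_\nu(B_X),\tau_{co})$ to $(\mathcal B_\mu(B_X),\tau_{co})$. The apparent difficulty is the factor $1/t$ in the integral defining $C_{\psi,\varphi}$, but since $R\psi(tz)=D\psi(tz)(tz)=t\,D\psi(tz)(z)$ one has $R\psi(tz)\,dt/t=D\psi(tz)(z)\,dt$, so $C_{\psi,\varphi}f(z)=\int_0^1 f(\varphi(tz))\,D\psi(tz)(z)\,dt$ with a genuine integrand that is continuous in $t$ on all of $[0,1]$. Given a compact $K\subset B_X$, the set $K_1=\{tz:0\le t\le1,\ z\in K\}$ is compact, hence so is $K_2=\varphi(K_1)$, and $C:=\sup_{z\in K,\,0\le t\le1}\|D\psi(tz)\|\,\|z\|<\infty$; thus for $f,h\in\mathcal B_\nu(B_X)$ and $z\in K$,
\[
|C_{\psi,\varphi}f(z)-C_{\psi,\varphi}h(z)|\le C\,\sup_{w\in K_2}|f(w)-h(w)|,
\]
so $C_{\psi,\varphi}$ is continuous for uniform convergence on compact sets; in particular $f_n\to0$ in $\tau_{co}$ forces $C_{\psi,\varphi}f_n\to0$ in $\tau_{co}$.

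With (1)--(3) established, Lemma~\ref{lem_Tj} applies and yields the stated characterization. I expect the $\tau_{co}$-continuity of $C_{\psi,\varphi}$ to be the only genuine obstacle, and it is resolved by the rewriting above; everything else is a quotation of (\ref{eq_5.1}), Lemma~\ref{nabla}, and Proposition~\ref{prop_e1e3}. It is also worth recording explicitly that compactness is being discussed only for $C_{\psi,\varphi}$ that already maps $\mathcal B_\nu(B_X)$ boundedly into $\mathcal B_\mu(B_X)$, which is guaranteed under the hypotheses of Theorem~\ref{thm6_1}.
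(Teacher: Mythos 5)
Your proposal follows exactly the route the paper intends: it invokes Lemma~\ref{lem_Tj} with $E=\mathcal B_\nu(B_X)$, $F=\mathcal B_\mu(B_X)$, $T=C_{\psi,\varphi}$, verifying hypotheses (1)--(2) via (\ref{eq_5.1}), Montel's theorem and Proposition~\ref{prop_e1e3}, and the paper simply states this combination ``with details omitted.'' Your verification of hypothesis (3) --- rewriting $R\psi(tz)\,dt/t$ as $D\psi(tz)(z)\,dt$ to remove the apparent singularity and then estimating over the compact set $\varphi(\{tz: t\in[0,1],\ z\in K\})$ --- is a correct filling-in of the step the paper leaves implicit.
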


\subsection{Proof of Theorem \ref{thm_compact}}
\begin{proof}[Proof of Theorem \ref{thm_compact}] First, we prove (B).
	
\medspace
(B) 	The implications (6) $\Rightarrow$  (5)  $\Rightarrow$  (4) and   (2) $\Rightarrow$  (3)  are  obvious. 
	
	(4) $\Rightarrow$ (2): For each $ x \in OS_m $ we write $ z_x := \sum_{k=1}^mz_kx_k. $  Denote $ w^{[m]} = \varphi^{[m]}(z_x).$	 Since $ \int_0^1\frac{dt}{\nu(t)} = \infty $ and $ (\ref{B_1})\to0 $ as $ \|\varphi^{[m]}(y)\| \to 1 $, $ \psi^{[m]} \in \mathcal B_{\mu,0}(\BB_m) $ and
	for any $\varepsilon > 0,$ there exists $ r_0 \in (1/2, 1) $ 	such that 
	\begin{equation}\label{limit_2compact_a} 	 \mu^{[m]}(z_x)\|R\psi^{[m]}(z_x)\|\int_{r_0}^{\|w^{[m]}\|}\frac{dt}{\nu^{[m]}(t)} <\frac{\varepsilon}{3/C_2}\quad \text{for} \  r_0 < \|w^{[m]}\| < 1. \end{equation}
	
%
	
	Let $\{f_n\}_{n\ge1}$ be a bounded sequence in $\mathcal B_{\nu}(B_X)$ converging to $0$ uniformly on compact subsets of $B_X$ and fix an $\varepsilon > 0.$ We may assume that $ \|f_n\|_{\mathcal B_\nu(B_X)} \le 1 $ for every $ n\ge 1. $ 
	By the hypothesis on the sequence $ \{f_n\}_{n\ge1}, $ 
	 there exists a positive integer $ n_0 $ such that
\[	 		 |f_n(w)| \le \frac{\varepsilon}{3\|\psi^{[m]}\|_{\mathcal B_{\mu^{[m]}}(\BB_m)}}, \quad n \ge n_0,\  w \in \overline{B[\varphi^{[m]},r_0]}.\]
	Therefore, for every $ n\ge n_0 $ and  $ \|w^{[m]}\| \le r_0  $ we have
	\begin{equation}\label{limit_2compact_b} 	
		\mu^{[m]}(z_x)\|R\psi^{[m]}(z_x)\| |f_n(w^{[m]})| < \frac{\varepsilon}{3}.\end{equation}
	
Now, for every $ n\ge n_0 $ and $ r_0 < \|w^{[m]}\| <1, $ with noting that  $ r_0\frac{w^{[m]}}{\|w^{[m]}\|} \in   \overline{B[\varphi^{[m]},r_0]},$ by (\ref{test_g2}), (\ref{limit_2compact_a}) and (\ref{limit_2compact_b}) we have
\begin{equation}\label{limit_2compact_c} 	
	\begin{aligned}
	\mu^{[m]}(z_x)&\|R\psi^{[m]}(z_x)\| |f_n(w^{[m]})| \\
	&\le 	\mu^{[m]}(z_x)\|R\psi^{[m]}(z_x)\| \Big|f_n(w^{[m]})- f_n\Big(r_0\frac{w^{[m]})}{\|w^{[m]}\|}\Big)\Big| \\
	&\quad + \mu^{[m]}(z_x)\|R\psi^{[m]}(z_x)\| \Big|f_n\Big(r_0\frac{w^{[m]})}{\|w^{[m]}\|}\Big)\Big| \\
	&\le 	\mu^{[m]}(z_x)\|R\psi^{[m]}(z_x)\|\int_{r_0/\|w^{[m]}\|}^1\|Rf_n(tw^{[m]})\|\frac{dt}{t} +\frac{\varepsilon}{3} \\
	&\le \frac{1}{C_2}\mu^{[m]}(z_x)\|R\psi^{[m]}(z_x)\|\frac{\|w^{[m]}\|}{r_0}\int_{r_0/\|w^{[m]}\|}^1\frac{1}{\nu(t\|w^{[m]}\|)}dt  +\frac{\varepsilon}{3} \\
	&\le \frac{1}{C_2}\mu^{[m]}(z_x)\|R\psi^{[m]}(z_x)\|\int_{r_0}^{\|w^{[m]}\|}\frac{1}{\nu(t)}dt +\frac{\varepsilon}{3} \\
	&\le \frac{\varepsilon}{3} + \frac{\varepsilon}{3} = \frac{2\varepsilon}{3}.
	\end{aligned}
	\end{equation}
Consequently, it follows from (\ref{limit_2compact_b}) and  (\ref{limit_2compact_b}) that
\[ \mu^{[m]}(z_x)\|R\psi^{[m]}(z_x)\||f_n(w^{[m]})| < \varepsilon  \]
for every $ z_x \in \BB_m $ and every $ n\ge n_0. $
This means $ \|C_{\psi, \varphi}(f_n)_x\|_{\mathcal B_\mu(B_X)} \to 0 $ as $ n\to\infty. $
	
	Finally, with the  note that the above estimates is independent of $ x \in OS_m, $  we obtain 
	\[  \|C_{\psi,\varphi}f_n\|_{\mathcal B_{\mu}(B_X)} =  \sup_{x\in OS_m} \|C_{\psi, \varphi}(f_n)_x\|_{\mathcal B_\mu(B_X)}\to 0 \quad \text{as} \ n \to \infty.  \]
	
Hence, by Lemma \ref{prop_C_compact}, $ C_{\psi,\varphi} $ is compact.

	\medskip
	(3) $\Rightarrow$ (6): 
	Suppose $C^0_{\psi, \varphi}$ is compact. Then  clearly, $C^0_{\psi, \varphi}$ is bounded.

	Firstly, assume that   $ (\ref{B_4})\not\to0 $ as $ \|\varphi(y)\| \to 1. $	 Then we can take $ \varepsilon_0 > 0 $ and a sequence $\{z^n\}_{n\ge1} \subset  B_X $ such that
	$\|w^n\| := \|\varphi(z^m)\| \to 1$ but
	\begin{equation}\label{es_6.18}
		 \mu(z^n)\|R\psi(z^n)\|\int_0^{\|w^n\|}\frac{dt}{\nu(t)}\ge \varepsilon_0 \quad \text{for every} \  n =1, 2, \ldots   \end{equation}
	We may assume that $ \|z^n\| > r_1 := \nu^{-1}(1/2). $
	
	Consider the   sequence $ \{\gamma_{w^n}\}_{n\ge1} $ defined  by (\ref{test_func3}). By Proposition \ref{lem3}, this sequence is bounded in $\mathcal B_{\nu}(B_X)$ and converges to $0$ uniformly on compact subsets of $ B_X. $  Then $ \|C^0_{\psi,\varphi}\gamma_{w^n}\|_{\mathcal B_\mu(B_X)} \to 0 $ as $ n\to \infty $  by Lemma \ref{prop_C_compact}.   
	
	On the other hand, by (\ref{test_g2}), (\ref{test_g2a}) and (\ref{es_6.18}) we have
	\[ \begin{aligned}
		\|C^0_{\psi,\varphi}\gamma_{w^n}\|_{\mathcal B_\mu(B_X)}&=\sup_{z\in B_X}\mu(z)\|R\psi(z)\| |\gamma_{w^n}(\varphi(z))| \\
		&\ge \mu(z^n)\|R\psi(z^n)\| |\gamma_{w^n}(w^n)| \\
		&=\mu(z^n)\|R\psi(z^n)\|\int_0^{\|w^n\|^2}g(t)dt \\
		&\ge \frac{C_1}{C_3}\mu(z^n)\|R\psi(z^n)\|\int_0^{\|w^n\|}\frac{dt}{\nu(t)} \\
		&\ge \frac{C_1}{C_3}\varepsilon_0.
	\end{aligned} \]
Thus, we get a contradiction.  
	 And the proof of  (B)  is complete. 
	 
	 Finally, we prove (A).
	 
	 \medspace
	 (A)  Since (2) $ \Leftrightarrow $ (3) $ \Leftrightarrow $ (6) and (6) $ \Rightarrow $ (1) is obvious, it suffices to prove (1) $ \Rightarrow $ (2).
	  We obtain this proof  by an estimate analogous to that used for the proof of (4) $ \Rightarrow $ (2) by using the known fact that 
	  \begin{equation}\label{phi_relative_compact_1} 
	  	\begin{aligned}
	  		B[\varphi_{(k)},r_0] &= \{\varphi_{(k)}(z): \ \|\varphi_{(k)}(z)\| \le r_0 \} \subset \BB_k \subset \C^k\\
	  		&\quad\ \ \text{is relatively compact for every $ 0\le r_0<1 $ and $ k\ge1, $ }
	  	\end{aligned} 
	  \end{equation}
  instead of (\ref{phi_relative_compact}).
\end{proof}
 
\subsection{Proof of Theorem \ref{thm_6.4}}
   \begin{proof}[Proof of Theorem \ref{thm_6.4}]
  	For every $ z\in B_X, $ consider  the function $ \delta_z $ given by $ \delta_z(f) = f(z)$ for every $ f \in \mathcal B_\mu(B_X). $ 
  By (\ref{eq_5.1}), it is clear that $ \delta_z \in (\mathcal B_\mu(B_X))'.$ Moreover, we have
  \begin{equation}\label{eq_6.18}
  	\dfrac{1}{2}\|z -w\| \le \|\delta_z - \delta_w\| \quad \forall z,w \in B_X.
  \end{equation} 
  Indeed, it is easy to check by direct calculation that
  \[ \dfrac{1}{2}\|z-w\| \le \sqrt{1-\dfrac{(1-\|z\|^2)(1-\|w\|^2)}{|1-\langle z,w\rangle|^2}} = \varrho_X(z,w) \]
  where $ \varrho_X $ is the pseudohyperbolic metric in $ B_X $ (see \cite[p.99]{GR}). On the other hand, we also have
  \[ \varrho_X(z,w) = \sup\{\varrho(f(z),f(w)):\ \text{$ f \in H^\infty(B_X) $ with $ \|f\|_\infty \le 1 $}\} \]
  (see (3.4) in \cite{BGM}), where $ \varrho(x,y) = \big|\frac{x-y}{1-\overline{x}y}\big|, $  $ x,y \in \BB_1, $ is the pseudohyperbolic metric in $ \BB_1. $ Note that, since the function $ \eta \mapsto \frac{\eta}{1-\overline{f(z)}f(w)} $ is holomorphic from $ \BB_1 $ into $ \BB_1 $ and $ f(z) - f(w) \mapsto 0,$ it follows from Schwarz's lemma that 
  $ \varrho(f(z), f(w)) \le |f(z) - f(w)| $ for every $ z, w \in B_X. $ Consequently,
  \[\begin{aligned}
  	\varrho_X(z,w) &\le \sup\{|f(z) - f(w)|:\ \ \text{for $ f \in H^\infty(B_X) $ with $ \|f\|_\infty \le 1 $}\}  \\
  	&\le \sup\{|\delta_z(f) - \delta_w(f)|:\ \ \text{for $ f \in H^\infty(B_X) $ with $ \|f\|_\infty \le 1 $}\} \\
  	&= \|\delta_z- \delta_w\|.
  \end{aligned}\]
  Hence, (\ref{eq_6.18}) is proved.

  For $ 0<r<1, $ the set $ V_r := \{\delta_z:\ \|z\| \le r\} \subset (\mathcal B_\mu^\nabla(B_X))'$ is bounded. Then, by the compactness of 
  $C_{\psi, \varphi} $ the set
  \[  (C_{\psi,\varphi})^*(V_z) = \{\psi(z)\delta_{\varphi(z)}:\ \|z\|\le r\} \] is relatively compact in $ (H^\infty(B_X))'. $

  It should be noted that, for every  subset  $ K $ of the dual of  a Banach space $ E  $ and every bounded subset $ D \subset \C, $ if the  set $ \{t\eta: \ t \in D, \eta \in A\} $ is relatively compact in $ E  $  then $ A \subset E' $ is relatively compact. With this fact in hand, since the  set $ \{\psi(z):\ \|z\|\le r\} $ is bounded,    the set $  \{\delta_z, \ \|z\|\le r\} $ is relatively compact. Then, it follows from the   inequality (\ref{eq_6.18}) 
  that $ \varphi(rB_X)$  is relatively compact.
   \end{proof}


\subsection{Proof of Theorem \ref{thm_compact2}}
\begin{proof}[Proof of Theorem \ref{thm_compact2}]
Since the implications  (3) $ \Rightarrow $ (2)  $ \Rightarrow $ (1) and  (4) $ \Rightarrow $ (5) are clear, it suffices to prove (1) $ \Rightarrow $ (4) and (5) $\Rightarrow$ (3).
	
	(1) $ \Rightarrow $ (4):  Since $ \int_0^1\frac{dt}{\nu(t)} <\infty $ and $ \psi^{[m]}  \in \mathcal B_{\mu^{[m]}}(\BB_m)$ we have $ M_{R\psi^{[m]},\varphi^{[m]}} <\infty, $ therefore, $ C_{\psi,\varphi} $ is bounded and for any $ \varepsilon >0, $ there exists $ r_0 \in (1/2,1) $ such that  (\ref{limit_2compact_a}) holds. The rest of the proof is similar to the case (1) $ \Rightarrow $ (4) of Theorem \ref{thm6_1}.

	(5) $ \Rightarrow $ (3): Suppose $C^0_{\psi, \varphi}: \mathcal B_{\nu,0}(B_X) \to \mathcal B_{\mu}(B_X)$ iscompact, it is bounded. As in the proof (5) $ \Rightarrow $ (3) of Theorem \ref{thm6_1} we get $ \psi \in \mathcal B_\mu(B_X). $
	
		This  concludes the proof of the theorem.
\end{proof}

	\subsection{Proof of Theorem \ref{thm2_2_cp}}
\begin{proof}[Proof of Theorem \ref{thm2_2_cp}]
	First, we prove (B). 
	
 It is clear that (5) $ \Rightarrow $ (4) $ \Rightarrow $ (3).
	
	\medspace
(3)	$ \Rightarrow $ (2): By Theorems \ref{thm2_2}, \ref{thm_compact}(B) and \ref{thm_compact2} we obtain (2) from (3).

\medspace
(2) $ \Rightarrow $ (5): By the hypothesis and Theorem \ref{thm2_2}, $ \psi \in \mathcal B_{\mu,0}(B_X) .$ Therefore, (5) holds if $ \int_0^1\frac{dt}{\nu(t)}<\infty, $ 
In  the case $ \int_0^1\frac{dt}{\nu(t)}=\infty,$ in the same way as in the proof of (3) $ \Rightarrow $ (6) of Theorem \ref{thm_compact} we also obtain that  (5) holds.

\medspace
Finally, the proof of (A) is similar to the one of (B) but here we use Theorem \ref{thm_compact}(A) instead of Theorem \ref{thm_compact}(B).
 \end{proof}


\end{document}